\documentclass{amsart}
\usepackage{geometry,graphicx,amssymb,amsmath,amsbsy,eucal,amsfonts,mathrsfs,amscd,bm,tcolorbox, subcaption, enumitem}
\usepackage{tikz}
\usetikzlibrary{patterns}
\usetikzlibrary{cd}
\usepackage{cleveref}
\usepackage{caption}
\usepackage{subcaption}

\usepackage[normalem]{ulem}

\numberwithin{equation}{section}

\allowdisplaybreaks[3]

\newtheorem{theorem}{Theorem}[section]
\newtheorem{lemma}[theorem]{Lemma}
\newtheorem{assumption}[theorem]{Assumption}

\newtheorem{corollary}[theorem]{Corollary}
\newtheorem{proposition}[theorem]{Proposition}

\theoremstyle{definition}
\newtheorem{definition}[theorem]{Definition}
\newtheorem{example}{Example}

\theoremstyle{remark}
\newtheorem{remark}[theorem]{Remark}

\theoremstyle{plain}
\newtheorem*{proposition*}{Proposition}

\DeclareMathOperator{\sgn}{sgn}

\def\id{\mathrm{id}}
\def\hook{\;\lrcorner\;}

\usepackage{mydef}

\begin{document}

\title[Discrete Poincar\'e and Bogovski\u{\i} operators]{Discrete Poincar\'e and Bogovski\u{\i} operators on\\ cochains and Whitney forms}

\author{Johnny Guzm\'an}
\address{Division of Applied Mathematics
Brown University
Box F
182 George Street
Providence, RI 02912, USA}
\email{johnny\_guzman@brown.edu}

\author{Anil N. Hirani}
\address{Department of Mathematics,
University of Illinois at Urbana-Champaign,
1305 West Green Street,
Urbana, IL 61801, USA}
\email{hirani@illinois.edu}

\author{Bingyan Liu}
\address{Department of Mathematics, 
University of Illinois at Urbana-Champaign,
1305 West Green Street,
Urbana, IL 61801, USA}
\curraddr{165 W Superior St, Chicago, IL 60654, IL}
\email{bingyanliu21@gmail.com}

\author{Pratyush Potu}
\address{Division of Applied Mathematics
Brown University
Box F
182 George Street
Providence, RI 02912, USA}
\email{pratyush\_potu@brown.edu}

\subjclass[2020]{65N30}
\keywords{discrete exterior calculus, finite element exterior calculus, discrete potentials, Poincar\'e operator, Bogovski\u{\i} operator, Whitney forms, cochains, simplicial complexes}

\begin{abstract}
Smooth Poincar\'e operators are a tool used to show the vanishing of smooth de Rham cohomology on contractible manifolds and have found use in the analysis of finite element methods based on the Finite Element Exterior Calculus (FEEC). We construct analogous discrete Poincar\'e operators acting on cochains and Whitney forms. We provide explicit, constructive realizations of these operators under various assumptions on the underlying domain or simplicial complex. In particular, we provide simple constructions for the discrete Poincar\'e operators on simplicial complexes which are collapsible and those with underlying domain being star-shaped with respect to a point. We then provide more abstract constructions on simplicial complexes which are discrete contractible and domains which are Lipschitz contractible. We also modify the discrete Poincar\'e operator on star-shaped domains to construct a discrete Bogovski\u{\i} operator which satisfies the requisite homotopy identity while preserving homogeneous boundary conditions. Applications arise in the construction of discrete scalar and vector potentials and in the discrete wedge product of Discrete Exterior Calculus (DEC).
\end{abstract}

\maketitle


\section{Introduction}

A key construction in the proof of the smooth homotopy invariance of de Rham cohomology is the homotopy operator. In the numerical analysis literature, a special case of this operator is called the Poincar\'e map or operator~\cite{Demkowicz2003, Demkowicz2005, Costabel2010, CapHu2023}. See Section~\ref{subsec:smooth-poincare} for the smooth construction and definitions.

The utility of Poincar\'e operators in the analysis of finite elements was first realized in \cite{Hiptmair1999}. Thereafter, Poincar\'e operators which preserve Whitney forms \cite{ArnoldFalkWinther2006, Whitney1957} have been used in the construction of finite element spaces with structure-preserving properties \cite{ArnoldFalkWinther2006, ChristiansenHu2018, Hiptmair1999}. Also valuable are Poincar\'e operators which preserve homogenous boundary conditions. These are referred to as Bogovski\u{\i} operators in reference to the right inverse of the divergence constructed in \cite{Bogovskii1979}. Of particular note are the regularized Poincar\'e and Bogovski\u{\i} operators of \cite{Costabel2010}, which are bounded between a variety of function spaces, allowing for the establishment of Poincar\'e inequalities.

A natural question is whether one can define an analogous \textit{discrete} operator directly defined over cochains (as they are isomorphic to the Whitney forms). Explicitly, given a cochain complex $(\C^\bullet, \dd^\bullet)$, a \textit{discrete} Poincar\'e operator is an operator, $P:\C^k \to \C^{k-1}$, satisfying
\begin{subequations}\label{eq:homotopy_formula}
    \begin{alignat}{2}
    (\dd P + P \dd) \alpha &= \alpha, \qquad && \alpha\in \C^k, k \geq 1,\\
    P\dd \alpha &= \alpha - \pi \alpha, \qquad && \alpha \in \C^0,
\end{alignat}
\end{subequations}
where $\pi$ is an appropriate constant map. The existence of such an operator allows one to establish the vanishing of cohomology spaces $H^k(\C^\bullet)$ for $k>0$. Moreover, such an operator provides a numerical method for finding discrete scalar and vector potentials (i.e. solutions of the differential equation $du = f$ for appropriate $f$). Having such a numerical method is useful in simulation of magnetostatic \cite{LeMenachClenetPiriou1998, Rodriguez_magnetostatics_2013} and eddy-current \cite{Rodriguez_eddycurrent_2015} problems. Another more recent application is in discrete de Rham methods where discrete potentials enable the construction of conforming liftings which in turn allow for proofs of consistency \cite{dipietro2025}.

Another application is in discrete exterior calculus (DEC) where a discrete wedge product on cochains is defined as an anti-symmetrized cup product~\cite[Chapter 7]{Hirani2003}. This product is not associative at the level of cochains but only at the level of cohomology. Thus the cochain complex with this wedge product does not form a differential graded algebra (DGA) even though it is commutative and satisfies a graded Leibniz property. In \cite{Stasheff1963}, the defect in associativity was resolved by introduction of an infinite sequence of operators in the context of loop spaces. Such structures are now called $A_\infty$-algebras. It is shown in~\cite{Liu2026} that a discrete Poincar\'e operator is a key step in a recursive construction of a $A_\infty$ structure on cochains for contractible domains following a technique in~\cite{DoMoSh2008}. See~\cite{Liu2026} for details.

Prior work on discrete Poincar\'e operators on triangulations of contractible domains is in \cite{Desbrun2005} (see also \cite{Leok2004}) where such operators are constructed for simplicial complexes that can be grown by a process of local augmentation by trivially star-shaped complexes. This condition is referred to as being a generalized star-shaped complex in \cite{Desbrun2005}, and is in fact equivalent to being what is referred to as strongly collapsible (or LC-reducible) in~\cite{Matousek08, Barmak2012}. Other work relevant to discrete Poincar\'e operators is in the literature on numerical methods for the computation of discrete scalar and vector potentials. See~\cite{Pitassi2022} and \cite{RodriguezValli2015} and references therein which focus on the case of simplicial complexes embedded in $\R^3$, as well as~\cite{dipietro2025} which builds on~\cite{Pitassi2022} to construct discrete potentials in arbitrary dimension. We also note the work of \cite{ErnVohralik20} and references therein which construct potentials by solving local problems on ``shellable'' patches.

In this work, we introduce discrete Poincar\'e operators for complexes of cochains and Whitney forms under different sets of assumptions. Unlike some of the previous work noted above, we do not investigate the boundedness of our discrete Poincar\'e operators. We generally consider two points of view in the definition of these operators: one which is based on singular simplices and made discrete through the use of Whitney forms, and one which is purely simplicial working directly with cochains. The operators constructed from both perspectives are analogous in their constructions, but rely on different assumptions.  

First, we provide a discrete Poincar\'e operator on polytopal subsets of $\R^n$ which are star-shaped with respect to a point (see Section \ref{sec:discPoincWhitsimple}). The operator is based on integration of Whitney forms over simplicial regions (cones). These cones generally will not coincide with facets in the simplicial complex and so implementation of the operator requires numerical integration over cut mesh elements. 
We then modify this approach to obtain a discrete Bogovski\u{\i} operator which preserves homogeneous boundary conditions (see Section \ref{sec:Bog}). 
We also generalize the previous discrete Poincar\'e operator to domains which are contractible: there exists a homotopy contracting the domain to a point (see Section \ref{sec:Whitney_based_discrete_op}). The resulting discrete Poincar\'e operator now requires integrating Whitney forms over curved regions. We remark that while these operators are defined over Whitney forms, we naturally obtain a discrete Poincar\'e operator over cochains using the Whitney and de Rham maps. We note that the numerical integration required for these operators may be challenging, but has been investigated in \cite{Farrell2011} for example. We also provide some details for the numerical implementation in 2d in Section \ref{sec:numerical_experiments}.

To avoid complicated integration, it is desirable to have a ``combinatorial'' discrete Poincar\'e operator defined directly over cochains. First, we show that this is possible when the simplicial complex of the domain admits a ``discrete contraction:'' there exists a homotopy contracting the domain to a point which is a simplicial map (see Section \ref{sec:simp_cones}).  Finding such a discrete contraction may be difficult, but we provide an explicit construction when the simplicial complex is strongly collapsible (see Section \ref{sec:strong_collapse_disc_contr}). Finally, we describe a remarkable discrete Poincar\'e operator when the simplicial complex is collapsible (see Section \ref{sec:collapse_cone}) which was independently discovered in the works of \cite{Desbrun2005} and \cite{Pitassi2022}. We show how this operator arises naturally from the foundational work of Whitehead \cite{Whitehead1950} in Appendix \ref{app:Whitehead}.

The paper is organized as follows. In Section~\ref{sec:prelim}, we recall some basic notions of simplicial complexes and smooth Poincar\'e operators and set the notation for future sections. In Section~\ref{sec:discPoincidea} we show how the discrete Poincar\'e operators arise from generalized cone operators and hint at an alternate point of view. Then, in Section \ref{sec:cones_special_cases}, we show some simple examples of generalized cone operators under certain assumptions on the simplicial complex or underlying domain. Following this, in Section \ref{sec:cones_general}, we show a general framework for constructing these generalized cone operators and provide some concrete realizations of this framework in Section \ref{sec:discPoincHom}.  Then, in Section \ref{sec:Bog}, we show to construct the discrete Bogovski\u{\i} operators on domains which are star-shaped with respect to a point by modifying a previously defined discrete Poincar\'e operator. Finally, we illustrate our constructed operators with numerical experiments in Section \ref{sec:numerical_experiments}.

\section{Preliminaries}\label{sec:prelim}

We assume familiarity with basic concepts from simplicial complexes, exterior calculus and Whitney forms. Some standard references are \cite{Munkres1984} for simplicial theory,~\cite{Lee2013,AbMaRa1988} for exterior calculus, and \cite{Whitney1957,Dodziuk1976,ArnoldFalkWinther2006} for Whitney forms.

To fix notation, let $X$ denote a geometric simplicial complex with underlying domain $\Omega = |X|\subset\R^n$. We denote by $X^{k}$ the subset of $X$ consisting of all the $k$-simplices in $X$. The vector spaces of $k$-chains and $k$-cochains with coefficients in $\R$ are denoted $\C_k(X)$ and $\C^k(X)$ respectively. We may simply write $\C_k$ or $\C^k$ when the simplicial complex is clear. We let $\partial_k:\C_k\to \C_{k-1}$ be the boundary map and $\dd^k:\C^k\to \C^{k+1}$ be the coboundary map. The duality pairing between chains and cochains is denoted $\bl \cdot, \cdot \br$.

We let $\Lambda^k(\Omega)$ denote the space of smooth differential $k$-forms on $\Omega$ where $k\in\{0, \dots, n\}$. We recall the Stokes' theorem for a simplicial $k$-chain $c\in \C_k$ and smooth enough differential $k$-form $\omega$:
\begin{equation}\label{eq:StokesSimp}
    \int_c d^{k-1} \omega = \int_{\partial_k c} \omega.
\end{equation}

The space of Whitney $k$-forms is denoted $V_h^k$. The de Rham map is denoted $\dr^k$ and maps a sufficiently smooth $k$-form (which includes $V_h^k$) to a $k$-cochain. The Whitney map is denoted $W^k: \C^k \rightarrow V_h^k$ and we note the following properties of these two maps.
\begin{subequations}
\begin{alignat}{1}
d^k W^k=&  W^{k+1} \dd^{k}, \label{aux1}\\
\dd^k \dr^k=& \dr^{k+1} d^k,  \label{aux2}\\
\dr^k W^k=&\text{id}. \label{aux3}
\end{alignat}
\end{subequations}
One can see \cite[Chapter IV, Section 27]{Whitney1957} for the first and third identity and the second identity is Stokes' theorem on simplices (i.e. \eqref{eq:StokesSimp}). 

\subsection{Smooth Poincar\'e operators}
\label{subsec:smooth-poincare}
Smoothly homotopic maps between manifolds induce identical maps on the de Rham cohomology~\cite[Prop. 17.10]{Lee2013}. We sketch the relationship between this fact and Poincar\'e operators. For smooth manifolds $M,N$, and $I=[0,1]$ and smoothly homotopic maps $F\simeq G: M \to N$, let $i_t: M \to M\times I$ be $i_t(p) = (p,t)$ for all $p\in M, t \in I$ and $\Phi:M\times I \to N$ the smooth homotopy between $F$ and $G$ with $\Phi \circ i_0 = F$ and $\Phi \circ i_1 = G$. The usual proof for induced equality on cohomology is first to show existence of $h:\Omega^k(M\times I) \to \Omega^{k-1}(M)$ satisfying $dh + hd = i_1^* - i_0^*$. Let $S_{(q,s)} = \left(0, \left.\frac{\partial}{\partial s}\right|_s\right)$ be the vector field on $M \times I$ where the identification $T_{(q,s)}M \simeq T_q M \times T_s I$ has been made. Then an explicit formula for $h$ is
$h \omega := \int_0^1i_t^*(S\hook\omega) dt$, 
for $\omega \in \Omega^k(M\times I)$~\cite[Lemma 17.9]{Lee2013}. The map $K = h \circ \Phi^*: \Omega^k(N) \to \Omega^{k-1}(M)$ is the homotopy operator satisfying $dK + Kd = G^* - F^*$. For $M=N$ smoothly contractible, $F=c_a:M\to M$ the constant map to a fixed $a \in M$ and $G = \id_M$ the homotopy operator is called the Poincar\'e operator $\mathfrak{p}_k : \Omega^k(M) \to \Omega^{k-1}(M)$. Thus for $\alpha \in \Omega^k(M)$
\begin{equation}\label{eq:smooth-poincare}
    \mathfrak{p}_k \alpha = \int_0^1 i_t^*\left( S\hook (\Phi^* \alpha)\right) \, dt \, .
\end{equation}

\subsection{Singular simplices and chains}
We next briefly introduce the theory of singular simplices and chains as it is useful for our construction and less familiar in the numerical analysis literature. Much of the content in this subsection is adapted from~\cite[Chapter 4]{Munkres1984} and~\cite[Chapter 18]{Lee2013}.

We denote by $Q_k$, the standard (reference) $k$-simplex in $\R^n$. This is the $k$-simplex with vertices $q_0, \dots, q_k$ where the points $q_i\in \R^n$ are such that
\begin{equation*}
    q_0 = (0, 0, \dots, 0), \:
    q_1 = (1, 0, \dots, 0), \:
    \ldots, \:
    q_n =(0, 0, \dots, 1).
\end{equation*}

\begin{definition}[Singular simplices]
     Given a topological space $M$, a singular $k$-simplex is any mapping $T:Q_k \to M$. We let $\Xi_k(M)$ denote the set of all singular $k$-simplices which map into $M$. 
\end{definition}

\begin{definition}[Singular chains]
    The space of singular $k$-chains on $M$ is the (infinite dimensional) vector space $\Ss_k(M) = \left\{ \sum_{T\in \Xi_k} c_T T : c_T = 0 \text{ for all but finitely many }T\in \Xi_k(M)\right\}$.
\end{definition}

Of particular note are the linear singular simplices which have direct correspondence to the geometric simplices introduced in the previous subsection.
 
\begin{definition}[Linear singular simplices]
 For an arbitrary ordered set of points $\{a_0, \dots, a_k\}$ (not necessarily affinely independent) in $\R^n$, there is a unique affine map $L[a_0, \dots, a_k]:Q_k \to [a_0, \dots, a_k]$ which maps $q_i$ to $a_i$ for all $i\in\{0, \dots, k\}$. This map $L[a_0, \dots, a_k]$ has the explicit representation 
 \begin{equation}\label{eq:lrep}
     L[a_0, \dots, a_k](x_1, \dots, x_k) = a_0 + \sum_{i=1}^k x_i (a_i-a_0).
 \end{equation}
Such a map is called the linear singular simplex determined by $\{a_0, \dots, a_k\}$. In particular, one can identify any $\sigma\in X$ with $L\sigma$. The linear singular simplex is oriented with the orientation given by the ordering of the corresponding points. We let $\Xi_k^{\lin}(M)$ denote the set of linear singular $k$-simplices associated to points in the topological space $M$.
\end{definition}

We will also need to consider singular simplices with Lipschitz regularity. Such singular simplices are sufficient for defining integration and in particular for Stokes' theorem to hold \cite[Section 8]{Whitney1952}.

\begin{definition}[Lipschitz singular simplices]
    A Lipschitz singular $k$-simplex of a topological space $M$ is a Lipschitz map $T:Q_k \to M$. We denote the set of Lipschitz singular $k$-simplices of $M$ by $\Xi_k^{\sm}(M)$.
\end{definition}

We denote by $\Ss_k^{\lin}(M)$ and $\Ss_k^{\sm}(M)$ the respective spaces of linear and Lipschitz singular $k$-chains on $M$. These are the subspaces of $\Ss_k(M)$ which are respectively finite formal sums of linear singular $k$-simplices and finite formal sums of Lipschitz singular $k$-simplices. When the domain is clear, we may suppress the denotation of $M$ (e.g. write $\Xi_k$ instead of $\Xi_k(M)$).

Now we define the boundary operator $\partial_k^\Ss: \Ss_k\to\Ss_{k-1}$ by
\begin{equation}\label{def:partialSs}
    \partial_k^\Ss T = \sum_{i=0}^k (-1)^i T \circ L[q_0, \dots, \widehat{q_i}, \dots, q_k],
\end{equation}
where $L[q_0, \dots, \widehat{q_i}, \dots, q_k]$ is the linear singular simplex mapping from $Q_{k-1}$ to $Q_k$. The definition is then extended to all of $\Ss_k$ by linearity. One can check that with these definitions, $\partial_k^\Ss \partial_{k+1}^\Ss = 0$ and we have that $(\Ss_\bullet, \partial_\bullet^\Ss)$ is a complex of singular chains.

We have a simpler formula for the boundary operator on the linear singular $k$-chains as
\begin{equation}\label{def:partialSslin}
    \partial_k^\Ss L[a_0, \dots, a_k] = \sum_{i=0}^k (-1)^i L[a_0, \dots, \widehat{a_i}, \dots, a_k],
\end{equation}
for any set of $k$ points $ a_0, \dots, a_k $ in $\R^n$. Hence, the linear singular chains form a complex with the boundary operator $\partial_k^{\Ss}$. Moreover, since composition of a Lipschitz map with a linear map yields a Lipschitz map, the boundary operator $\partial_k^{\Ss}$ maps $\Ss_k^{\sm}$ to $\Ss_{k-1}^{\sm}$ and so the Lipschitz singular chains also form a complex with this boundary operator. 

We also have the Stokes' theorem on Lipschitz singular chains. Before noting this result, we first recall the definition of integration over Lipschitz singular chains. The integral of $\omega$ over a Lipschitz singular simplex, $T$, is defined as
\begin{equation}\label{def:intSing}
    \int_{T} \omega := \int_{Q_k} T^*\omega,
\end{equation}
where $T^*\omega$ is the pullback of $\omega$ by $T$. Then the definition is extended to $\Ss_k^{\sm}$ by linearity.

\begin{remark}[Degenerate simplices]\label{rem:deg}
    If one integrates over the linear singular simplex $T= L[a_0, \dots, a_k]$ when the points $a_0, \dots a_k$ are not affinely independent, the Jacobian $DL[a_0, \dots a_k]$ is rank deficient, and so the integral in \eqref{def:intSing} is zero.
\end{remark}

The Stokes' theorem for Lipschitz singular chains states that for a smooth enough $(k-1)$-form $\omega$ and a Lipschitz singular $k$-chain $C\in \Ss_k^{\sm}$, it holds that
\begin{equation}\label{eq:StokesSing}
    \int_C d^{k-1} \omega = \int_{\partial_k^\Ss C} \omega.
\end{equation}
The above statement for smooth chains can be found in \cite[Theorem 18.12]{Lee2013} and for Lipschitz chains, one can see \cite[Section 8]{Whitney1952}.

\subsection{Simplicial and singular chain maps}

\begin{definition}[Simplicial map]
    Let $X$ and $Y$ be simplicial complexes. Let $\widetilde{f}: X^0 \to Y^0$ be a map between their vertex sets such that for every simplex $[v_0, \ldots, v_k]$ of $X$, the set $[\widetilde{f}(v_0), \ldots, \widetilde{f}(v_k)]$ is a simplex of $Y$. Then, $\widetilde{f}$ can be extended to a continuous map, $f:|X|\to |Y|$ which maps each simplex of $X$ linearly onto a simplex of $Y$. Such $f$ is called a \emph{simplicial map}. As is standard, we may also write $f:X\to Y$ is a simplicial map when we are only interested in the action of $f$ on the simplices of $X$.
\end{definition}

\begin{definition}[Induced simplicial chain map]
    Let $X$ and $Y$ be simplicial complexes and $f:X\to Y$ be a simplicial map. The \emph{induced simplicial chain map} is a family of homomorphisms $\{f_\#^k\}$ where $
f_\#^k : \C_k(X) \longrightarrow \C_k(Y)$, is
defined on oriented simplices by
\[
f_\#^k ([v_0, \ldots, v_k]) =
\begin{cases}
  [f(v_0), \ldots, f(v_k)], & \text{if } f(v_0), \ldots, f(v_k) \text{ are distinct},\\[4pt]
  0, & \text{otherwise.}
\end{cases}
\]
This definition extends linearly to all chains. 
\end{definition}

We note the following two basic properties about induced simplicial chain maps which we will use later. First, from \cite[Lemma 12.1]{Munkres1984}, it holds that for any simplicial map $f:X\to Y$ and oriented $k$-simplex $\sigma\in X^k$, 
\begin{equation}\label{eq:simplicial_boundary_sharp_commute}
    \partial_k f_\#^k(\sigma) = f_\#^{k-1}(\partial_k \sigma).
\end{equation}
Also, as noted in the proof of \cite[Theorem 12.2]{Munkres1984}, it holds that for simplicial maps $f: X\to Y$ and $g:Y\to Z$ where $X,Y,Z$ are simplicial complexes,
\begin{equation}\label{eq:simplicial_map_functoriality}
    (g \circ f)_\# = g_\# \circ f_\#.
\end{equation}

We have an analogous concept to the simplicial chain map for singular chains.
\begin{definition}[Induced singular chain map]
    Let $M$ and $N$ be topological spaces and $f:M \to N$ be a continuous map. The \emph{induced singular chain map} is a family of homomorphisms $\{f_k^\#\}$ where  $f_k^\# :\Ss_k(M) \to \Ss_k(N)$, is defined on singular simplices by $f_k^\#(T) = f \circ T$ for any $T\in \Xi_k(M)$.
\end{definition}

The induced singular chain map commutes with the singular boundary operator \cite[Theorem 29.1]{Munkres1984}. That is, for any continuous map $f:M\to N$ and singular $k$-simplex $T\in \Xi_k(M)$,
\begin{equation}
    \partial_k^\Ss f_k^\#(T) = f_{k-1}^\#(\partial_k^\Ss T).
\end{equation}

\begin{remark}
    If $X$ and $Y$ are simplicial complexes and $f: |X| \to |Y|$ is a simplicial map, then 
    \begin{equation}\label{eq:singular_boundary_sharp_commute}
        f^\#(L\sigma) = f_\#(\sigma) \quad \forall \sigma\in X.
    \end{equation}
\end{remark}

\section{Discrete Poincar\'e operators}\label{sec:discPoincidea}

We now introduce the discrete Poincar\'e operators as duals in the appropriate sense to simplicial and singular chain-valued ``generalized cone operators.'' This terminology is taken from \cite{Desbrun2005}. Such operators are standard constructions in simplicial topology (see e.g. \cite[Section 8]{Munkres1984} for a simplicial cone operator and \cite[Section 29]{Munkres1984} for a singular cone operator). However, in the topology literature the definitions are not always suitable for computation (e.g. they may not even be constructive), in contrast to the operators we present in this article.

\begin{remark}\label{rem:DEC-hook}
    While many previous constructions of discrete Poincar\'e operators seem to be based on a cone operator framework, it is not the only path to such operators. An alternate approach uses a DEC discretization of the smooth construction in~\S\ref{subsec:smooth-poincare}. The construction builds on the idea that the interior product (hook or contraction operator) and pullback of forms have  DEC discretizations. These are the two main ingredients in the integrand $i_t^*\left( S\hook (\Phi^* \alpha)\right)$ of~\eqref{eq:smooth-poincare}. Let $M$ be a smooth manifold and $\Gamma$ a dimension $k$ submanifold. Let $X$ be a smooth vector field on $M$ that is transverse to $\Gamma$. Under appropriate assumptions, the advection of $\Gamma$ by the flow of $X$ for a short time $t$ creates the \emph{extrusion} of $\Gamma$, a submanifold $E_t^X(\Gamma)$ of dimension $k+1$. Then $\left.\frac{d}{dt}\right|_{t=0} \int_{E_t^X(\Gamma)} \omega = \int_\Gamma X\hook \omega$ for $\omega \in \Omega^{k+1}(M)$. This is the basis for one of the discretizations of the interior product in DEC~\cite{Hirani2003} and in FEEC~\cite{Bossavit2003,HeHi2011}. While this idea inspires our combinatorial construction, we present it packaged into a cone operator construction for uniformity of presentation. 
\end{remark}

\subsection{Generalized cone operators} 
\begin{definition}[Simplicial cone operator]
    Given a simplicial complex $X$, a \emph{simplicial cone operator} is any homomorphism $\Co_k:\C_k(X) \to \C_{k+1}(X)$ such that for any $\sigma\in X^k$, 
    \begin{subequations}\label{eq:homsimp}
    \begin{alignat}{2}
        \Co_{k-1}(\partial_k \sigma)+ \partial_{k+1} \Co_k(\sigma) &= \sigma \qquad &&\forall \sigma \in X^k, k\geq 1 \label{eq:hom1simp}\\
        \partial_1 \Co_0(\sigma) &= \sigma - a \qquad &&\forall \sigma \in X^0, \label{eq:hom2simp}
    \end{alignat}
    \end{subequations}
    where $a\in X^0$ is a given vertex. We refer to $a$ as the contraction vertex.
\end{definition}

\begin{definition}[Singular cone operator]
    Given a simplicial complex $X$, a \emph{singular cone operator} is any operator $\Co_k^\Ss:\C_k(X) \to \Ss_{k+1}^{\sm}(X)$ such that 
    \begin{subequations}\label{eq:homsing}
    \begin{alignat}{2}
        \Co_{k-1}^\Ss(\partial_k \sigma)+ \partial_{k+1}^\Ss \Co_k^\Ss(\sigma) &= L\sigma \qquad &&\forall \sigma \in X^k, k\geq 1 \label{eq:hom1sing}\\
        \partial_1^\Ss \Co_0^\Ss(\sigma) &= L\sigma - L[a] \qquad &&\forall \sigma \in X^0. \label{eq:hom2sing}
    \end{alignat}
    \end{subequations}
    where $a$ is a given point in $|X|$. We refer to $a$ as the contraction point.
\end{definition}

\subsection{Discrete Poincar\'e operators}
\begin{definition}[Combinatorial discrete Poincar\'e operator]\label{def:comb_disc_Poinc}
    Let $X$ be a simplicial complex and $\Co_k$ be a simplicial cone operator. Then, we define the associated discrete Poincar\'e operator, $P^k:\C^k(X) \to \C^{k-1}(X)$ for $k\ge 1$, as
    \begin{equation}\label{eq:defcombdiscPoinc}
        \bl P^k \alpha, \sigma \br = \bl \alpha,\, \Co_{k-1}(\sigma) \br,
    \end{equation}
    for any $\alpha\in\C^k(X)$ and $\sigma\in X^{k-1}$.
\end{definition}

\begin{proposition}
Let $X$ be a simplicial complex and $\Co_k$ be a simplicial cone operator with contraction vertex $a$. Let the associated discrete Poincar\'e operator of Definition \ref{def:comb_disc_Poinc} be denoted $P^k$.
Then, it holds
\begin{subequations}\label{eq:homotopy_formula_dPDEC}
    \begin{alignat}{2}
    (\dd^{k-1} P^k + P^{k+1} \dd^{k}) \alpha &= \alpha, \qquad && \forall \alpha\in \C^k, k \geq 1,\label{eq:combdP+Pd}\\
    P^{k+1}\dd^k \alpha &= \alpha - \pi_{\C}^* \alpha, \qquad && \forall \alpha \in \C^0,,\label{eq:combPd}
\end{alignat}
\end{subequations}
where $\pi_{\C}^* \alpha$ is such that $\bl \pi_{\C}^* \alpha, v\br = \bl \alpha, a\br$ for any $v\in X^0$.
\end{proposition}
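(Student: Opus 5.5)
The plan is pure duality: pair both sides of \eqref{eq:homotopy_formula_dPDEC} with an arbitrary simplex and move every operator onto the chain side. There the identities become exactly the cone identities \eqref{eq:hom1simp} and \eqref{eq:hom2simp}. Since a cochain is determined by its values on the oriented simplices of $X$ (extended linearly to chains), it suffices to check each identity after pairing with an arbitrary $\sigma\in X^k$. We also need that the definition \eqref{eq:defcombdiscPoinc}, stated on simplices, extends by linearity to $\bl P^k\alpha, c\br = \bl \alpha, \Co_{k-1}(c)\br$ for all chains $c\in\C_{k-1}$. This holds because both sides are linear in $c$ and $\Co_{k-1}$ is a homomorphism. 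We use the defining duality of the coboundary, $\bl \dd^{k}\beta, c\br = \bl \beta, \partial_{k+1} c\br$.

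For \eqref{eq:combdP+Pd}, fix $k\ge 1$, $\alpha\in\C^k$ and $\sigma\in X^k$. First compute
\[
\bl \dd^{k-1}P^k\alpha,\sigma\br = \bl P^k\alpha,\partial_k\sigma\br = \bl \alpha, \Co_{k-1}(\partial_k\sigma)\br,
\]
using the linear extension to the chain $\partial_k\sigma$. Next,
\[
\bl P^{k+1}\dd^k\alpha,\sigma\br = \bl \dd^k\alpha,\Co_k(\sigma)\br = \bl \alpha,\partial_{k+1}\Co_k(\sigma)\br.
\]
Adding these and applying \eqref{eq:hom1simp} gives $\bl\alpha,\sigma\br$, which is the claim.

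For \eqref{eq:combPd}, take $\alpha\in\C^0$ and a vertex $v\in X^0$. Then
\[
\bl P^1\dd^0\alpha, v\br = \bl \alpha,\partial_1\Co_0(v)\br = \bl\alpha, v - a\br = \bl\alpha,v\br - \bl\alpha,a\br
\]
by \eqref{eq:hom2simp}, and $\bl\alpha,a\br = \bl\pi_{\C}^*\alpha, v\br$ by the definition of $\pi_{\C}^*$.

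The only point needing care is bookkeeping rather than a genuine obstacle. Definition \ref{def:comb_disc_Poinc} specifies $P^k$ only on simplices, so we must justify evaluating it on the boundary chain $\partial_k\sigma$ via linearity of $\Co_{k-1}$. We should also confirm the orientation convention: pairing with $-\sigma$ (the opposite orientation) is consistent because $\Co$ is a homomorphism on $\C_k$.
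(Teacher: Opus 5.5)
Your proposal is correct and follows the paper's own argument: pair with an arbitrary simplex, use the duality $\bl \dd\beta, c\br = \bl \beta, \partial c\br$ and the definition of $P$, and reduce to the cone identities \eqref{eq:hom1simp} and \eqref{eq:hom2simp}. Your remark that $\bl P^k\alpha, c\br = \bl \alpha, \Co_{k-1}(c)\br$ extends linearly to chains makes explicit a step the paper uses without comment.
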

\begin{proof}
For $\alpha\in\C^k$ and $\sigma\in X^k$, $k\geq 1$ using the definition of the coboundary map, \eqref{eq:defcombdiscPoinc}, and \eqref{eq:hom1simp},
    \begin{alignat*}{2}
        \bl (\dd^{k-1} P^k + P^{k+1} \dd^{k}) \alpha, \sigma\br &= \bl P^k \alpha, \partial_k \sigma\br + \bl\dd^k \alpha, \Co_{k}(\sigma)\br\\
        &= \bl \alpha, \Co_{k-1}(\partial_k \sigma)+ \partial_{k+1} \Co_k(\sigma)\br = \bl \alpha, \sigma\br.
    \end{alignat*}
For $\alpha\in \C^{0}$ and $v\in X^0$ we have $\bl P^1 \dd^0 \alpha, v\br = \bl \dd^0 \alpha, \Co_0(v)\br = \bl \alpha, \partial_{1}\Co_{0}(v) \br =  \bl \alpha, v\br - \bl\alpha,  a\br$.

\end{proof}

\begin{definition}[Whitney form based discrete Poincar\'e operator]\label{def:Whit_disc_Poinc}
    Let $X$ be a simplicial complex and $\Co_k^\Ss$ be a singular cone operator. Also, let $V_h^k$ be the space of Whitney $k$-forms over the simplicial complex $X$. Then, we define the associated discrete Poincar\'e operator over Whitney forms, $\Po^k:V_h^k \to V_h^{k-1}$ by setting
    \begin{equation}\label{eq:Pdef}
        \int_\sigma \Po^k u = \int_{\Co_{k-1}^{\Ss}(\sigma)} u, \quad \forall \sigma\in X^{k-1}
    \end{equation}
    for any $u\in V_h^k$.
\end{definition}

\begin{proposition}\label{thm:poincareWhit}
Let $X$ be a simplicial complex and $\Co_k^\Ss$ be a singular cone operator with contraction point $a$. Let the associated discrete Poincar\'e operator of Definition \ref{def:Whit_disc_Poinc} be denoted $\mathcal{P}^k$.
Then, it holds
\begin{subequations}
    \begin{alignat}{2}
        (d^{k-1} \Po^k + \Po^{k+1} d^k) u &= u, \quad && \forall u\in V_h^k, k\geq 1 \label{eq:hom1Wgeneral}\\
        \Po^{k+1} d^k u &= u - u(a) \quad  && \forall u\in V_h^0 \label{eq:hom2Wgeneral}
    \end{alignat}
\end{subequations}
\end{proposition}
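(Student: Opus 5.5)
Since a Whitney $k$-form is uniquely determined by its degrees of freedom (its integrals over the $k$-simplices of $X$), the plan is to test both sides of \eqref{eq:hom1Wgeneral} and \eqref{eq:hom2Wgeneral} against every $\sigma\in X^k$ (resp.\ $X^0$) and show the integrals agree. Equivalently, we use that $\dr^k$ is injective on $V_h^k$, which follows from \eqref{aux3} together with $W^k\dr^k=\id$ on $V_h^k$. First I note that $\Po^k$ is well defined: the right-hand side of \eqref{eq:Pdef} is a finite sum of integrals over Lipschitz singular simplices, so it defines a $(k-1)$-cochain. Setting $\Po^k u := W^{k-1}$ of that cochain gives an element of $V_h^{k-1}$ whose integrals over the $(k-1)$-simplices are the prescribed values, by \eqref{aux3}.

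For $k\ge1$, $u\in V_h^k$ and $\sigma\in X^k$, the simplicial Stokes theorem \eqref{eq:StokesSimp} applied to the Whitney form $\Po^k u$ gives $\int_\sigma d^{k-1}\Po^k u=\int_{\partial_k\sigma}\Po^k u$. Because $\partial_k\sigma$ is a combination of $(k-1)$-simplices of $X$, linearity of \eqref{eq:Pdef} turns this into $\int_{\Co^\Ss_{k-1}(\partial_k\sigma)}u$. For the second term, the definition \eqref{eq:Pdef} at level $k+1$ with $d^ku\in V_h^{k+1}$ gives $\int_\sigma \Po^{k+1}d^ku=\int_{\Co^\Ss_k(\sigma)}d^ku$. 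The singular Stokes theorem \eqref{eq:StokesSing} then rewrites this as $\int_{\partial^\Ss_{k+1}\Co^\Ss_k(\sigma)}u$. Adding the two terms and invoking \eqref{eq:hom1sing}, we get $\int_{L\sigma}u=\int_\sigma u$, using that integration over the linear singular simplex $L\sigma$ agrees with integration over the oriented simplex $\sigma$. Since $\sigma$ is arbitrary, unisolvence gives \eqref{eq:hom1Wgeneral}.

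For $k=0$, $u\in V_h^0$ and a vertex $v$, the same computation gives $\int_v\Po^1 d^0u=\int_{\Co^\Ss_0(v)}d^0u=\int_{\partial^\Ss_1\Co^\Ss_0(v)}u=u(v)-u(a)$ by \eqref{eq:hom2sing}. The function $u-u(a)$ (with $u(a)$ a constant, which lies in $V_h^0$) has the same vertex values, so the two agree, which proves \eqref{eq:hom2Wgeneral}.

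The main obstacle is justifying \eqref{eq:StokesSing} for $d^ku$ when $u$ is only a Whitney form, i.e.\ piecewise polynomial with only tangential continuity rather than globally smooth, on a singular chain that cuts across many elements. I would handle this either by approximating $u$ by smooth forms in a norm under which both sides are continuous, or by noting that $d^k u = W^{k+1}\dd^k\dr^k u$ by \eqref{aux1}, which is itself a Whitney form, and appealing to Whitney's theory of flat forms. Whitney forms are flat cochains, and Stokes holds for flat forms on Lipschitz chains \cite[Section 8]{Whitney1952}. I would simply cite this regularity as the setting in which \eqref{eq:StokesSing} is stated.
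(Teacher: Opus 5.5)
Your proposal is correct and follows the paper's proof essentially step for step. You test against each $\sigma$, apply simplicial Stokes and then \eqref{eq:Pdef} to the $d^{k-1}\Po^k u$ term, apply \eqref{eq:Pdef} and then \eqref{eq:StokesSing} to the $\Po^{k+1}d^k u$ term, and conclude with \eqref{eq:hom1sing} (resp.\ \eqref{eq:hom2sing} for $k=0$). Your extra remarks on the well-definedness of $\Po^k$ and on justifying \eqref{eq:StokesSing} for piecewise-polynomial Whitney forms on chains that cut across elements go beyond the paper, which simply invokes \eqref{eq:StokesSing} for ``smooth enough'' forms.
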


\begin{proof}
Let $u \in V_h^k$, $k\geq 1$. Then we must show that 
\begin{equation}
\int_{\tau} d^{k-1} \Po^k u + \int_{\tau} \Po^{k+1} d^k u  =\int_{\tau} u \quad \forall \tau \in X^k.
\end{equation} 

Hence, we compute 
\begin{alignat*}{2}
\int_{\tau} d^{k-1} \Po^k u  + \int_{\tau} \Po^{k+1} d^k u  &= \int_{\partial_k \tau} \Po^{k} u + \int_{\Co_k^\Ss(\tau)} d^k u \qquad && \text{by \eqref{eq:Pdef}, \eqref{eq:StokesSimp}}  \\
&= \int_{\Co_{k-1}^\Ss(\partial_k \tau)} u + \int_{\partial_{k+1}^\Ss \Co_k^\Ss(\tau)} u \qquad && \text{by \eqref{eq:StokesSing}, \eqref{eq:Pdef}} \\
&= \int_{L\tau} u = \int_{\tau} u \qquad && \text{by \eqref{eq:hom1sing}}.
\end{alignat*}
The last equality is from the definition of the integral of a $k$-form over a $k$-simplex. On the other hand, for $u\in V_h^0$ and $v\in X^0$, we have
\begin{equation*}
    (\Po^{1} d^0 u) (v) = \int_{\Co_0^\Ss(v)} d^0 u = \int_{\partial_1^\Ss \Co_0^\Ss(v)} u = \int_{L[v]-L[a]} u = u(v) - u(a),
\end{equation*}
where in the second to last step we used \eqref{eq:hom2sing}.
\end{proof}

Using the Whitney and de Rham maps we can adapt the operator to cochains. 
\begin{definition}[Whitney form based discrete Poincar\'e operator on cochains]\label{Whitney_cochain_op}
Suppose we are in the setting of the previous proposition. Then we define a discrete Poincar\'e operator $\Poo^k: \mathcal{C}^k \rightarrow \mathcal{C}^{k-1}$ by
\begin{equation}
 \Poo^k := \dr^{k-1} \Po^k W^k.   
\end{equation}
\end{definition}

\begin{corollary}\label{cor:Whitney_cochain_op}
It holds, 
\begin{subequations}
    \begin{alignat}{2}
          (\dd^{k-1} \Poo^{k}+\Poo^{k+1} \dd^k)\alpha &= \alpha, \quad && \forall \alpha\in \C^k, k\geq 1 \label{hom1C}\\
        \Poo^{k+1} \dd^k \alpha &= \alpha - \pi_W \alpha \quad  && \forall \alpha\in \C^0 \label{hom2C}
    \end{alignat}
\end{subequations}
where $\pi_W \alpha \in \C^0$ is such that $\bl \pi_W \alpha, v \br = (W^0\alpha) (a)$ for all $v \in X^0$.
\end{corollary}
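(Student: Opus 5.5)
The plan is to transfer Proposition~\ref{thm:poincareWhit} to cochains by conjugating with the Whitney and de Rham maps. The identities \eqref{aux1}--\eqref{aux3} let us move $\dd$ through $\dr$ and $W$, and the Whitney map is an isomorphism onto $V_h^\bullet$. Throughout, fix $\alpha\in\C^k$ and set $u:=W^k\alpha\in V_h^k$.

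\emph{Case $k\geq 1$.} First rewrite the term $\dd^{k-1}\Poo^k\alpha$. By \eqref{aux2},
\[
\dd^{k-1}\dr^{k-1}\Po^k W^k\alpha=\dr^k d^{k-1}\Po^k u .
\]
Next rewrite $\Poo^{k+1}\dd^k\alpha$. By \eqref{aux1} we have $W^{k+1}\dd^k\alpha=d^kW^k\alpha=d^ku$, so
\[
\Poo^{k+1}\dd^k\alpha=\dr^k\Po^{k+1}d^ku .
\]
Adding the two expressions and applying \eqref{eq:hom1Wgeneral} gives $\dr^k u$. Finally, \eqref{aux3} gives $\dr^kW^k\alpha=\alpha$, which proves \eqref{hom1C}.

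\emph{Case $k=0$.} The same use of \eqref{aux1} gives $\Poo^1\dd^0\alpha=\dr^0\Po^1 d^0 u$. By \eqref{eq:hom2Wgeneral}, this equals $\dr^0\bigl(u-u(a)\bigr)$.

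\begin{itemize}
\item The de Rham map on $0$-forms is evaluation at vertices, and it is linear.
\item By \eqref{aux3}, $\dr^0u=\alpha$.
\item The constant function $u(a)=(W^0\alpha)(a)$ evaluates to $(W^0\alpha)(a)$ at every vertex $v$, so its de Rham image is exactly $\pi_W\alpha$.
\end{itemize}

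Together these give $\Poo^1\dd^0\alpha=\alpha-\pi_W\alpha$, which is \eqref{hom2C}.

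There is no genuine obstacle here. The only point needing care is that $\Po^k$ maps into $V_h^{k-1}$, so that \eqref{aux2} and the de Rham map apply to $\Po^k u$. This holds because Whitney forms are smooth enough for the de Rham map, as noted in the preliminaries. One should also check that constant functions lie in $V_h^0$, so that $u-u(a)$ is a legitimate Whitney $0$-form. This holds because the barycentric coordinates sum to one.
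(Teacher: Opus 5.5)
Your proposal is correct and follows the paper's own proof essentially step for step. Like the paper, you conjugate Proposition~\ref{thm:poincareWhit} by $W$ and the de Rham map, use \eqref{aux1}--\eqref{aux3} to move the coboundary through, and in the $k=0$ case evaluate $W^0\alpha-(W^0\alpha)(a)$ at the vertices. The side checks you flag — that Whitney forms admit the de Rham map, and that constants lie in $V_h^0$ — are left implicit in the paper but are harmless.
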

\begin{proof}
  For $\alpha\in\C^k$, $k\geq 1$, we have 
\begin{alignat*}{2}
  (\dd^{k-1} \Poo^{k}+\Poo^{k+1} \dd^k)\alpha &= \dd^{k-1} \dr^{k-1} \Po^k W^k\alpha + \dr^{k} \Po^{k+1} W^{k+1} \dd^k\alpha \qquad &&   \\
  &= \dr^k (d^{k-1} \Po^k+\Po^{k+1} d^k) W^k\alpha \qquad && \text{by \eqref{aux1}, \eqref{aux2}}   \\
  &= \dr^k W^k\alpha = \alpha  \qquad && \text{by } \eqref{eq:hom1Wgeneral}, \eqref{aux3}\\
\end{alignat*}
For $\alpha\in\C^0$, using \eqref{aux1} and \eqref{eq:hom2Wgeneral}, we have
\begin{equation*}
    \Poo^{1} \dd^0 \alpha = \dr^0 \Po^{1} W^{1} \dd^0\alpha = \dr^0 \Po^1 d^0 W^0\alpha = \dr^0 \left(W^0 \alpha - (W^0\alpha)(a)\right) = \alpha - \pi_W \alpha.
\end{equation*}
\end{proof}

\begin{remark}
    The continuous Poincar\'e operator satisfies the complex property $P\circ P = 0$. We do not verify this property for the discrete operators we construct in this paper. However, from our constructions, one can easily obtain discrete Poincar\'e operators which satisfy the complex property following \cite[Theorem 5]{CapHu2023}. That is, given a discrete Poincar\'e operator, $P$ which satisfies \eqref{eq:homotopy_formula}, the operator $\widetilde{P} := P - dPP$ will satisfy the complex property along with \eqref{eq:homotopy_formula}.

\end{remark}

In the remainder of this paper, we construct simplicial and singular generalized cone operators under various assumptions on either the simplicial complex or underlying domain. We first give some simple constructions in Section \ref{sec:cones_special_cases}, before providing a more general construction in Section \ref{sec:cones_general}.

\section{Some simple examples of cone operators}\label{sec:cones_special_cases}

\subsection{Simplicial cone operator on collapsible simplicial complexes}\label{sec:collapse_cone}
For ``collapsible'' simplicial complexes a simplicial cone operator 
is developed in~\cite[Section 2.3]{Desbrun2005} and~\cite{Pitassi2022} in independent contexts, and we present it here for completeness. This construction in fact follows  from a theorem of Whitehead \cite{Whitehead1950} as we briefly detail in Appendix \ref{app:Whitehead}.

We first remark on the contexts in which the simplicial cone operator we will present has previously appeared. In \cite{Desbrun2005}, the authors use a sequence of ``one ring cone augmentations'' to define the simplicial cone operator. The set of simplicial complexes which can be grown from these augmentations are called ``generalized star-shaped complexes'' in \cite{Desbrun2005}. This notion is actually equivalent to the notion of strong collapsibility of \cite{Barmak2012} which is strictly stronger than collapsibility. However, their construction works just as well in the collapsible setting, a fact which we detail in this section.

In~\cite{Pitassi2022}, the authors construct a discrete vector potential algorithm on collapsible simplicial complexes. The authors utilize a so-called ``complete acyclic matching" to invert the discrete curl operator on $1$-cochains (i.e. the coboundary operator $\dd^1$) over $3$-dimensional polyhedral cell complexes. In particular, they establish that the existence of a complete acyclic matching of $1$-chains is equivalent to the complex being collapsible. In fact, they demonstrate that a collapse sequence directly yields such a complete acyclic matching. When this specific matching is utilized, the back substitution process in \cite[Algorithm 1]{Pitassi2022} is algebraically identical to the discrete Poincar\'e operator obtained from the simplicial cone operator introduced below. We also note that the authors provide an algorithm (\cite[Algorithm 3]{Pitassi2022}) which can handle non-collapsible simplicial complexes.

\begin{definition}[Elementary simplicial collapse]\label{def:elementary_collapse}
    Let $X$ be a simplicial complex. Let $\sigma$ and $\tau$ be simplices of $X$ such that
\begin{enumerate}
    \item $\tau$ is a face of $\sigma$ of codimension 1 (i.e., $\dim \tau = \dim \sigma - 1$).
    \item $\sigma$ is the only simplex in $X$ that has $\tau$ as a face (other than $\tau$ itself).
\end{enumerate}
The operation of going from the complex $X$ to the complex $X' := X \setminus \{\sigma, \tau\}$, denoted $X\searrow X'$ is called an \emph{elementary simplicial collapse}.  The simplex $\tau$ is called a \textit{free face}. The operation of going from $X'$ to $X$ is called an \emph{elementary simplicial expansion}.
\end{definition}

While the collapse $X\searrow X' = X \setminus \{\sigma, \tau\}$ removes $\sigma$ and $\tau$ from the collection of closed simplices, in terms of the underlying topological space $|X|$, this operation corresponds to removing the open simplices (interiors) $\Int(\sigma)$ and $\Int(\tau)$, such that $|X'| = |X| \setminus (\Int(\sigma) \cup \Int(\tau))$.

\begin{definition}[Collapsible simplicial complex]
    A simplicial complex $X$ is called \emph{collapsible} if there exists a finite sequence of elementary simplicial collapses $X= X_m \searrow X_{m-1} \searrow \cdots \searrow X_0 = a$ that reduces $X$ to a single vertex $a\in X^0$. 
\end{definition}

An illustration of a collapse sequence is given in Figure \ref{fig:collapse_sequence}.

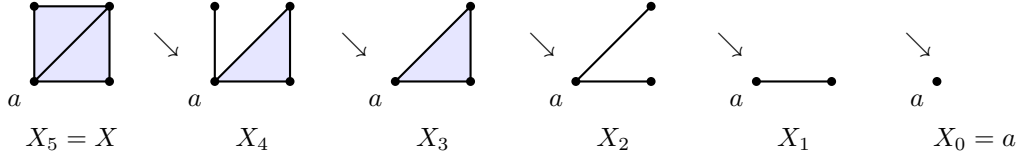
\begin{figure}[htpb]
    \centering
    \begin{tikzpicture}[scale=1.0]
        \tikzset{
            vertex/.style={circle, fill=black, inner sep=1.2pt},
            edge/.style={thick, black},
            face/.style={fill=blue!10}
        }
        
        \begin{scope}[xshift=0cm]
            \fill[face] (0,0) -- (1,0) -- (1,1) -- cycle;
            \fill[face] (0,0) -- (1,1) -- (0,1) -- cycle;
            \draw[edge] (0,0) -- (1,0) -- (1,1) -- (0,1) -- cycle;
            \draw[edge] (0,0) -- (1,1);
            \node[vertex] at (0,1) {};
            \node[vertex, label=below left:$a$] at (0,0) {};
            \node[vertex] at (1,0) {};
            \node[vertex] at (1,1) {};
            \node[below] at (0.5,-0.5) {$X_5=X$};
        \end{scope}

        \node at (1.75, 0.5) {$\searrow$};

        \begin{scope}[xshift=2.4cm]
            \fill[face] (0,0) -- (1,0) -- (1,1) -- cycle;
            \draw[edge] (0,0) -- (1,0) -- (1,1);
            \draw[edge] (0,0) -- (1,1);
            \draw[edge] (0,0) -- (0,1);
            \node[vertex] at (0,1) {};
            \node[vertex, label=below left:$a$] at (0,0) {};
            \node[vertex] at (1,0) {};
            \node[vertex] at (1,1) {};
            \node[below] at (0.5,-0.5) {$X_4$};
        \end{scope}

        \node at (4.25, 0.5) {$\searrow$};

        \begin{scope}[xshift=4.8cm]
            \fill[face] (0,0) -- (1,0) -- (1,1) -- cycle;
            \draw[edge] (0,0) -- (1,0) -- (1,1);
            \draw[edge] (0,0) -- (1,1);
            \node[vertex, label=below left:$a$] at (0,0) {};
            \node[vertex] at (1,0) {};
            \node[vertex] at (1,1) {};
            \node[below] at (0.5,-0.5) {$X_3$};
        \end{scope}

        \node at (6.75, 0.5) {$\searrow$};

        \begin{scope}[xshift=7.2cm]
            \draw[edge] (0,0) -- (1,0);
            \draw[edge] (0,0) -- (1,1);
            \node[vertex, label=below left:$a$] at (0,0) {};
            \node[vertex] at (1,0) {};
            \node[vertex] at (1,1) {};
            \node[below] at (0.5,-0.5) {$X_2$};
        \end{scope}

        \node at (9.25, 0.5) {$\searrow$};

        \begin{scope}[xshift=9.6cm]
            \draw[edge] (0,0) -- (1,0);
            \node[vertex, label=below left:$a$] at (0,0) {};
            \node[vertex] at (1,0) {};
            \node[below] at (0.5,-0.5) {$X_1$};
        \end{scope}

        \node at (11.75, 0.5) {$\searrow$};

        \begin{scope}[xshift=12.0cm]
            \node[vertex, label=below left:$a$] at (0,0) {};
            \node[below] at (0.5,-0.5) {$X_0=a$};
        \end{scope}

    \end{tikzpicture}
    \caption{A sequence of five elementary simplicial collapses reducing a complex of two adjacent triangles to a single vertex $a$.}
    \label{fig:collapse_sequence}
\end{figure}

We now suppose $X$ is a collapsible simplicial complex with collapse sequence $X= X_m \searrow X_{m-1} \searrow \cdots \searrow X_0 = a$ for $a\in X^0$. We will consider the sequence of elementary simplicial expansions obtained from reversing the collapse sequence: $a = X_0 \nearrow X_{1} \nearrow \cdots \nearrow X_m = X$. Rather than defining the simplicial cone operator $\Co_k$ for each $k$ separately, we will instead define the operator inductively using this expansion sequence.

\begin{definition}[Cone operator from simplicial collapse]\label{def:collapse_cone}
    Let $\C(X):=\oplus_{k\geq 0}\C_k(X)$. We will define $\Co:\C(X)\to\C(X)$ and obtain $\Co_k$ by restricting to specific $k$-values. First, we define
\begin{equation}\label{eq:collapse_cone_def_a}
    \Co(a) := 0,
\end{equation}
where $0$ is the zero chain. Thus, we have defined $\Co$ on $\C(X_0)$. 

Suppose we have defined $\Co$ on all of $\C(X_{j-1})$ for some $j\in\{0,\dots, m\}$. We now define $\Co$ on $\C(X_{j})$ by defining its action on the two simplices that are added when performing the simplicial expansion $X_{j-1}\nearrow X_{j}$. Let these two simplices be denoted $\sigma_{j}$ and $\tau_{j}$ where $\tau_{j}$ is the free face. That is, $X_j = X_{j-1}\cup\{\sigma, \tau\}$. We then define
\begin{subequations}\label{eq:collapse_cone_def}
    \begin{alignat}{2}
        \Co(\sigma_j) &:= 0\label{eq:collapse_cone_def_1}\\
        \Co(\tau_j) &:= \sigma_j - \Co(\partial \sigma_j - \tau_j),\label{eq:collapse_cone_def_2}
    \end{alignat}
\end{subequations}
where $\partial = \partial_k$ when $\sigma_j$ is a $k$-simplex.
\end{definition}

\begin{proposition}\label{prop:collapse_cone}
    The cone operator of Definition \ref{def:collapse_cone} is a simplicial cone operator.
\end{proposition}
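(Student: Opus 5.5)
The plan is induction along the expansion sequence $a = X_0 \nearrow X_1 \nearrow \cdots \nearrow X_m = X$. The inductive claim is that the restriction of $\Co$ to $\C(X_j)$ takes values in $\C(X_j)$ and satisfies the homotopy identities \eqref{eq:homsimp} for every simplex of $X_j$, with contraction vertex $a$.

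\textbf{Well-definedness.} First I would check that \eqref{eq:collapse_cone_def_2} makes sense. Since $\tau_j$ is a codimension-one face of $\sigma_j$, the chain $\partial\sigma_j - \tau_j$ is a combination of the other faces of $\sigma_j$. Here $\tau_j$ is oriented so that it appears in $\partial\sigma_j$ with coefficient $+1$, and a sign convention must be fixed for this. Those other faces differ from $\tau_j$, and they cannot be $\sigma_j$. They belong to $X_j$ because it is a complex, so they lie in $X_{j-1}$. Hence $\Co(\partial\sigma_j-\tau_j)$ is already defined and lies in $\C(X_{j-1})\subset \C(X_j)$. Consequently $\Co(\tau_j)\in \C(X_j)$, and the definition is consistent.

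\textbf{Base case.} For $X_0=\{a\}$ we must verify \eqref{eq:hom2simp}. This holds because $\partial_1\Co(a) = 0 = a - a$.

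\textbf{Inductive step.} Assume the identities hold on $X_{j-1}$. They then hold on $X_j$ for all old simplices, since $\Co$ and $\partial$ are unchanged on $\C(X_{j-1})$. It remains to check the two new simplices; let $\dim\sigma_j = k$.

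For $\sigma_j$ itself, we have $\partial\Co(\sigma_j)=0$. We need $\Co(\partial\sigma_j)=\sigma_j$, which is immediate:
\[
\Co(\partial\sigma_j) = \Co(\tau_j)+\Co(\partial\sigma_j-\tau_j) = \sigma_j .
\]
Note $k\ge1$ here, so \eqref{eq:hom1simp} is the relevant identity.

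For $\tau_j$, write $c := \partial\sigma_j - \tau_j \in \C_{k-1}(X_{j-1})$. We compute
\[
\partial\Co(\tau_j) = \partial\sigma_j - \partial\Co(c) = \tau_j + c - \partial\Co(c).
\]
Next apply the inductive hypothesis to $c$, a chain in $X_{j-1}$, using linearity.

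If $k-1\ge1$, \eqref{eq:hom1simp} gives $\partial\Co(c) = c - \Co(\partial c)$. Hence $\partial\Co(\tau_j) = \tau_j + \Co(\partial c)$. Since $\partial\partial\sigma_j=0$, we have $\partial c = -\partial\tau_j$. Therefore $\partial\Co(\tau_j)=\tau_j-\Co(\partial\tau_j)$, which is \eqref{eq:hom1simp} for $\tau_j$.

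If $k=1$, then $\tau_j$ is a vertex $v$ and $\sigma_j=[w,v]$ with $w\in X_{j-1}$, so $c=-w$. By \eqref{eq:hom2simp} for $w$, $\partial\Co(c) = -(w-a)$. This gives $\partial\Co(v) = v - w + w - a = v-a$, which is \eqref{eq:hom2simp} for $v$.

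\textbf{Main obstacle.} The only delicate points are bookkeeping. One must ensure that the inductive hypothesis, stated for simplices, extends linearly to chains such as $c$; the relation $\partial c=-\partial\tau_j$ then handles the rest. One must also separate the vertex case from the higher-dimensional case and keep the orientation convention for $\tau_j$ in \eqref{eq:collapse_cone_def_2} consistent. Since $X_m=X$, the induction yields the proposition.
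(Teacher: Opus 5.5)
Your proof is correct and follows essentially the same induction along the expansion sequence as the paper, with the same two computations for $\sigma_j$ and $\tau_j$ (your well-definedness check corresponds to the paper's Lemma~\ref{lem:aux_collapse_cone}). You also treat the case $\dim\sigma_j=1$ separately, where $\tau_j$ is a vertex and \eqref{eq:hom2simp} must be used in place of \eqref{eq:hom1simp}, and you fix the orientation so that $\tau_j$ appears in $\partial\sigma_j$ with coefficient $+1$; the paper's displayed computation passes over both points, and your version handles them explicitly.
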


Before we prove Proposition \ref{prop:collapse_cone}, we first note the following lemma.
\begin{lemma}\label{lem:aux_collapse_cone}
    Let $\sigma_j$ and $\tau_j$ be the two simplices added to $X_{j-1}$ in the simplicial expansion $X_{j-1}\nearrow X_j$. Let $k$ be the dimension of $\sigma_j$. Then, it holds that $\partial\sigma_j - \tau_j \in \C_{k-1}(X_{j-1})$.
\end{lemma}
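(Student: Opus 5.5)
We need to show that every face appearing in $\partial\sigma_j$ other than $\tau_j$ already lies in $X_{j-1}$. Since $\partial\sigma_j - \tau_j$ is a signed sum of codimension-one faces of $\sigma_j$, it suffices to show that each such face belongs to $X_{j-1}$. We use the sign convention implicit in \eqref{eq:collapse_cone_def_2}: $\tau_j$ is oriented so that it appears with coefficient $+1$ in $\partial\sigma_j$. (Otherwise we reorient $\tau_j$; this changes nothing below.) With this convention, the $\tau_j$ term cancels exactly and $\partial\sigma_j - \tau_j = \sum_{\rho} \pm \rho$, where $\rho$ ranges over the codimension-one faces of $\sigma_j$ different from $\tau_j$.

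Let $\rho$ be such a face. Since $X_j$ is a simplicial complex containing $\sigma_j$, it is closed under taking faces, so $\rho \in X_j = X_{j-1}\cup\{\sigma_j,\tau_j\}$. Also $\rho \neq \sigma_j$ because $\dim\rho = k-1 < k$, and $\rho\neq\tau_j$ by choice. Hence $\rho\in X_{j-1}$.

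It remains to check that $\rho$ makes sense as an element of $\C_{k-1}(X_{j-1})$, which holds because $X_{j-1} = X_j\setminus\{\sigma_j,\tau_j\}$ is itself a simplicial complex. Indeed, by Definition~\ref{def:elementary_collapse}, $\tau_j$ is a face only of $\sigma_j$ (and itself), and $\sigma_j$ is a face of nothing else: any simplex with $\sigma_j$ as a face would also have $\tau_j$ as a face. So removing the pair preserves closure under faces. Therefore $\partial\sigma_j - \tau_j$ is a linear combination of $(k-1)$-simplices of $X_{j-1}$, as claimed.

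The only delicate point is the orientation convention for $\tau_j$ relative to $\partial\sigma_j$. I would state explicitly that $\tau_j$ carries the orientation induced from $\sigma_j$, so that the cancellation is exact. The rest is bookkeeping with face-closure.
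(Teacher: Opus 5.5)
Your argument is correct and is essentially the paper's: the codimension-one faces of $\sigma_j$ other than $\tau_j$ lie in $X_j$ by face-closure, hence in $X_j\setminus\{\sigma_j,\tau_j\}=X_{j-1}$. You also make explicit the orientation convention (that $\tau_j$ enters $\partial\sigma_j$ with coefficient $+1$) which the paper leaves implicit when it writes $\eta_{i'}=\tau_j$. One quibble: when $k=1$ the free face is a vertex and has no opposite orientation, so there the convention must be arranged by reorienting $\sigma_j$ rather than $\tau_j$.
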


\begin{proof}
    Because $X_j$ is a simplicial complex, $\partial \sigma_j$ is a chain of $(k-1)$-simplices in $X_j$. That is, $\partial \sigma_j = \sum_{i=0}^k \eta_i$ where $\eta_i\in X_j^{(k-1)}$ for all $i\in\{0,\dots, k\}$. Moreover, by Definition \ref{def:elementary_collapse}, $\tau_j$ is a $(k-1)$-face of $\sigma_j$ and so there exists $i'\in \{0,\dots, k\}$ such that $\eta_{i'} = \tau_j$. Finally, since $X_{j-1}=X_j \setminus \{\sigma_j,\tau_j\}$, it must be that $\eta_i\in X_{j-1}^{(k-1)}$ for all $i\neq i'$. Thus, $$\partial \sigma_j - \tau_j = \sum_{i\in \{0,\dots,k\}\setminus i'} \eta_i \in \C_{k-1}(X_{j-1}).$$
\end{proof}
We are now ready to prove Proposition \ref{prop:collapse_cone}.
\begin{proof}[Proof of Proposition \ref{prop:collapse_cone}]
    We prove this by induction on the simplicial expansion sequence. We first note that by \eqref{eq:collapse_cone_def_a}, we immediately have \eqref{eq:homsimp} for the vertex $a$. Now, suppose that \eqref{eq:homsimp} holds for all simplices in $X_{j-1}$. We will show that \eqref{eq:homsimp} holds for all simplices in $X_{j}$. In particular, we simply need to show that \eqref{eq:homsimp} holds for the two simplices $\sigma_{j}$ and $\tau_j$ which are added in the simplicial expansion $X_{j-1}\nearrow X_j$. 
    We first compute using \eqref{eq:collapse_cone_def_2} and the fact that \eqref{eq:homsimp} holds on $\partial \sigma_j - \tau_j$ (since $\partial \sigma_j - \tau_j\in \C(X_{j-1})$ due to Lemma \ref{lem:aux_collapse_cone})
    \begin{align*}
        \Co(\partial \tau_j) + \partial \Co(\tau_j) &= \Co(\partial \tau_j) + \partial \sigma_j - \partial \Co(\partial \sigma_j - \tau_j)\\
        &= \Co(\partial \tau_j) + \partial \sigma_j + \Co(\partial (\partial \sigma_j - \tau_j))-(\partial \sigma_j - \tau_j) = \tau_j.
    \end{align*}
    Then, using \eqref{eq:collapse_cone_def_1} and \eqref{eq:collapse_cone_def_2}, we have
    \begin{align*}
        \Co(\partial \sigma_j) + \partial \Co(\sigma_j) &= \Co(\partial \sigma_j -\tau_j) + \Co(\tau_j) + \partial \Co(\sigma_j)= \sigma_j - \Co(\tau_j) + \Co(\tau_j) = \sigma_j.
    \end{align*}
\end{proof}

\subsection{Singular cone operator on star-shaped domains}\label{sec:discPoincWhitsimple}

\begin{definition}[Star-shaped domain]\label{def:star_domain}
    We say a domain $\Omega$ is \emph{star-shaped with respect to a point $a\in \Omega$} if for any $x\in\Omega$, the line segment connecting $x$ and $a$ lies completely in $\Omega$.
\end{definition}
 We can now give the simple formula for the singular cone operator in this setting.
\begin{definition}[Singular cone operator on star-shaped domains]\label{def:sing_cone_star}
    Given a simplicial complex $X$ with geometric realization $\Omega = |X|$ which is star-shaped with respect to a point $a$, we define the operator $\Co_k:\C_k(X)\to \Ss_{k+1}^{\lin}$ by setting
    \begin{equation}
        \Co_k^\Ss(\sigma) = L[a,\sigma], \quad \forall \sigma\in X^k,
    \end{equation}
    where we are using the notation that for $\sigma = [v_0, \dots, v_k] \in X^k$, $[a, \sigma] = [a, v_0, \dots, v_k]$.
\end{definition}

\begin{proposition}
    The operator defined in Definition \ref{def:sing_cone_star} is a singular cone operator.
\end{proposition}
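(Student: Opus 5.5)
The plan is to verify \eqref{eq:hom1sing} and \eqref{eq:hom2sing} directly from the formula \eqref{def:partialSslin} for the singular boundary of a linear singular simplex. We also need to check that the operator is well defined, meaning $\Co_k^\Ss(\sigma)$ really lies in $\Ss_{k+1}^{\sm}(|X|)$. For this, note that for $\sigma=[v_0,\dots,v_k]\in X^k$ the image of $L[a,v_0,\dots,v_k]$ is the convex hull of $a$ and $\sigma$. Every point of that hull lies on a segment from $a$ to a point of $\sigma\subset\Omega$. By Definition \ref{def:star_domain}, that segment lies in $\Omega$, so the image of $L[a,\sigma]$ is contained in $\Omega$. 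Since the map is affine, it is Lipschitz, so $L[a,\sigma]\in\Xi_{k+1}^{\lin}(\Omega)\subset \Xi^{\sm}_{k+1}(\Omega)$. We extend $\Co_k^\Ss$ to all of $\C_k(X)$ by linearity.

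For $k\ge1$, apply \eqref{def:partialSslin} to $L[a,v_0,\dots,v_k]$, relabelling the points as $a_0=a$ and $a_{i+1}=v_i$. Omitting $a_0$ gives the term $L[v_0,\dots,v_k]=L\sigma$. Omitting $v_i$ gives $(-1)^{i+1}L[a,v_0,\dots,\widehat{v_i},\dots,v_k]$. Hence
\[
\partial_{k+1}^\Ss \Co_k^\Ss(\sigma) = L\sigma - \sum_{i=0}^k (-1)^i L[a, v_0,\dots,\widehat{v_i},\dots,v_k].
\]
On the other hand, $\partial_k\sigma=\sum_i(-1)^i[v_0,\dots,\widehat{v_i},\dots,v_k]$. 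By linearity,
\[
\Co_{k-1}^\Ss(\partial_k\sigma)=\sum_i(-1)^iL[a,v_0,\dots,\widehat{v_i},\dots,v_k].
\]
Adding the two expressions gives exactly $L\sigma$, which is \eqref{eq:hom1sing}. For $k=0$ and $\sigma=[v]$, we get $\partial_1^\Ss L[a,v]=L[v]-L[a]$, which is \eqref{eq:hom2sing}.

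The main point needing care is consistency of orientations. The oriented simplex $\sigma$ is an equivalence class of orderings, and $L\sigma$ changes sign under odd permutations. So we must check that $\Co_k^\Ss$ is well defined on $\C_k(X)$, i.e. that $L[a,v_{\pi(0)},\dots,v_{\pi(k)}]$ transforms with $\sgn\pi$. At the level of integration, which is what Definition \ref{def:Whit_disc_Poinc} uses, this holds, because permuting the vertices composes with an affine self-map of $Q_{k+1}$ of determinant $\sgn\pi$. I would fix a representative ordering of each simplex, as the paper implicitly does, and identify linear singular simplices up to this sign convention. Once that convention is in place the computation above is complete.

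A second subtlety is that $a$ may coincide with a vertex of $\sigma$, or be affinely dependent with it. This causes no problem: the boundary formula \eqref{def:partialSslin} holds for arbitrary, not necessarily affinely independent, points, and by Remark \ref{rem:deg} such degenerate terms simply integrate to zero.
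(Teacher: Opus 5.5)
Your proof is correct and follows the paper's argument: apply the linear boundary formula \eqref{def:partialSslin} to $L[a,v_0,\dots,v_k]$ to obtain $L\sigma - \Co_{k-1}^\Ss(\partial_k\sigma)$, and treat the vertex case $\partial_1^\Ss L[a,v]=L[v]-L[a]$ separately. Your extra checks are handled correctly; these are that the image of $L[a,\sigma]$ lies in $\Omega$ by star-shapedness, and that one must fix representative orderings (e.g.\ via a global vertex order) so the orientation convention is consistent, and the paper leaves both implicit.
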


\begin{proof}
    First, notice that the operator maps to the space of linear singular $(k+1)$-simplices which is a subspace of $\Ss_{k+1}^{\sm}$. Now, let $k\geq 1$ and $\sigma=[v_0, \ldots, v_k]$. Then
\begin{align*}
     \partial_{k+1}^\Ss \Co_k^\Ss(\sigma)&= \partial_{k+1}^\Ss L[a, v_0, v_1, \ldots, v_k]\\
     &= L\sigma + \sum_{i=0}^k (-1)^{i+1} L[a, v_0, \ldots \hat{v_i}, \ldots, v_k] = L\sigma - \Co_{k-1}^\Ss(\partial_k \sigma).
\end{align*}

On the other hand, for $\sigma\in X^0$, we have $\partial_{1}^\Ss \Co_0^\Ss(\sigma) = \partial_1^\Ss L[a, \sigma] = L\sigma - L[a]$.
\end{proof}

We remark that the operator in Definition \ref{def:sing_cone_star} is essentially the ``bracket operation'' of \cite[Section 29]{Munkres1984} up to orientation and identification of simplices with their singular counterparts. We now give some details to better elucidate the cone operator of Definition \ref{def:sing_cone_star} as well as the resulting discrete Poincar\'e operator.

We split $X^k$ into two disjoint sets for all $k$. We let $X_{\smid}^k$ be the set of all $\sigma \in X^k$ such that the point $a$ is coplanar to $\sigma$ and then define the set $X_{\rhd}^k = X^k\setminus X_{\smid}^k$. The image of $L[a,\sigma]$ will in general not be a simplex in $X^{k+1}$. However, it will be a geometric $(k+1)$-simplex lying in $\Omega$ if $\sigma\in X_{\rhd}^k$ as we are assuming $\Omega$ is star-shaped with respect to $a$. On the other hand, if $\sigma\in X_{\smid}^k$, then the image of $L[a,\sigma]$ is a degenerate $(k+1)$-simplex. Some illustrations are given in Figure \ref{fig:cones}.



\begin{figure}[h!]
\centering

\begin{subfigure}{0.44\textwidth}
    \centering
    \begin{tikzpicture}[
        scale=0.6,
        dot/.style={circle, fill, inner sep=1.5pt}
    ]
        \coordinate (C) at (0,0); 
        \foreach \angle/\i in {0/1, 60/2, 120/3, 180/4, 240/5, 300/6}{
            \coordinate (H\i) at (\angle:3cm);
        }
        \foreach \angle/\i in {0/1, 60/2, 120/3, 180/4, 240/5, 300/6}{
            \coordinate (P\i) at (\angle:1.5cm);
        }
        \coordinate (s1) at (H5);
        \coordinate (s2) at (P4);
        \coordinate (a) at (1.9,0.68571);


        \draw[dashed, black, thick] (a) -- (s1);
        \draw[dashed, black, thick] (a) -- (s2);
        
        \foreach \i in {1,...,6}{
            \pgfmathtruncatemacro{\j}{mod(\i, 6) + 1}
            \draw[gray, thin] (H\i) -- (P\i) -- (P\j) -- (H\j) -- cycle;
            \draw[gray, thin] (H\j) -- (P\i);
            \draw[gray, thin] (C) -- (P\i);
        }
        \draw[gray, thin] (P1)--(P2)--(P3)--(P4)--(P5)--(P6)--cycle;
        
        \draw[line width=1.5pt] (s1) -- (s2);
        \node at (-1.8, -1.2) {$\sigma$};
        \node[dot, label={[yshift=1pt, xshift=-2pt]right:$a$}] at (a) {};
    - \end{tikzpicture}
    \caption{$[a,\sigma]$ is a geometric $2$-simplex as $\sigma \in X_{\rhd}^{(1)}$.}
    \label{fig:hex_b_new_sigma}
\end{subfigure}
\hspace{4mm} 
\begin{subfigure}{0.44\textwidth}
    \centering
    \begin{tikzpicture}[
        scale=0.6,
        dot/.style={circle, fill, inner sep=1.5pt}
    ]
        \coordinate (C) at (0,0); 
        \foreach \angle/\i in {0/1, 60/2, 120/3, 180/4, 240/5, 300/6}{
            \coordinate (H\i) at (\angle:3cm);
        }
        \foreach \angle/\i in {0/1, 60/2, 120/3, 180/4, 240/5, 300/6}{
            \coordinate (P\i) at (\angle:1.5cm);
        }
        \coordinate (s1) at (P1);
        \coordinate (s2) at (P6);
        \coordinate (a) at (1.9,0.68571);


        \draw[dashed, black, thick] (a) -- (s1);
        \draw[dashed, black, thick] (a) -- (s2);
        
        \foreach \i in {1,...,6}{
            \pgfmathtruncatemacro{\j}{mod(\i, 6) + 1}
            \draw[gray, thin] (H\i) -- (P\i) -- (P\j) -- (H\j) -- cycle;
            \draw[gray, thin] (H\j) -- (P\i);
            \draw[gray, thin] (C) -- (P\i);
        }
        \draw[gray, thin] (P1)--(P2)--(P3)--(P4)--(P5)--(P6)--cycle;
        
        \draw[line width=1.5pt] (s1) -- (s2);
        \node at (1.45, -0.55) {$\sigma$};
        \node[dot, label={[yshift=1pt, xshift=-2pt]right:$a$}] at (a) {};
    - \end{tikzpicture}
    \caption{$[a,\sigma]$ is a degenerate 2-simplex as $\sigma\in X_{\smid}^{(1)}$}
    \label{fig:hex_b_sigma}
\end{subfigure}

\caption{Illustration of different scenarios for the region $[a,\sigma]$.}
\label{fig:cones}
\end{figure}
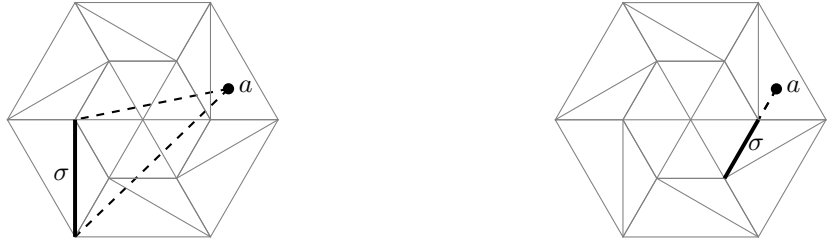


Following Definition \ref{def:Whit_disc_Poinc}, one obtains a discrete Poincar\'e operator by setting the corresponding degrees of freedom. In particular, for $u \in V_h^k$, the operator $\Po$ of Definition \ref{def:Whit_disc_Poinc} is such that 
\begin{equation}\label{discPoincWhitsimple}
\int_{\sigma} \tr_\sigma \Po u = \begin{cases}
    \int_{[a,\sigma]}  u \qquad &\forall \sigma \in X_{\rhd}^{(k-1)},  \\
0 \qquad &\forall \sigma \in X_{\smid}^{(k-1)}.
\end{cases}
\end{equation}

Note that the integration over the region $[a,\sigma]$ is well-defined precisely for $\sigma \in X_{\rhd}^{(k-1)}$. 

\begin{remark}\label{rem:Whit}
    In \cite[Chapter IV, Section 25]{Whitney1957}, it is shown that the continuous Poincar\'e operator, $\mathfrak{p}$, on star-shaped domains with the straight line contraction $\Phi(x,t) = (1-t)a + tx$ satisfies the property that for any simplex $\sigma\in\Omega$, and regular enough $k$-form $\omega$,
    \begin{equation}
        \int_\sigma \tr_\sigma \mathfrak{p} \omega = \int_{[a,\sigma]} \omega,
    \end{equation}
    if $a$ does not lie in the same plane as $\sigma$, and $\int_\sigma \mathfrak{p} \omega = 0$ otherwise.
    This implies that the discrete Poincar\'e operator defined in this section is in fact equivalent to the canonical projection of the continuous Poincar\'e operator. 
\end{remark}

In light of Remark \ref{rem:Whit}, we find the advantage of the discrete Poincar\'e operator in this section lies in its simplicity. In particular, the discrete operator can be directly computed if one can performs the appropriate integration in \eqref{discPoincWhitsimple} numerically. This involves numerically integrating piecewise polynomials over regions which cut through elements, which may be challenging, a subject which has previously been studied in \cite{Farrell2011} for instance. We provide our own numerical experiments for this operator in Section \ref{sec:numerical_experiments} below.

\section{Cone operators more generally}\label{sec:cones_general}

In this section, we outline general frameworks from which generalized cone operators can be defined. More explicit details pertaining to noted assumptions are given in the succeeding sections.

\subsection{Product Space Triangulations and Extrusions}
One can think about constructing discrete Poincar\'e operators by discretizing each ingredient in the definition of the smooth Poincar\'e operator in Section~\ref{subsec:smooth-poincare}. We take this approach to construct generalized cone operators which give the discrete Poincar\'e operators following the discussion in Section \ref{sec:discPoincidea}. In particular, as pointed out in Remark~\ref{rem:DEC-hook} the interior product (hook or contraction) operator will be discretized via a notion of extrusion which relies on triangulations of the product space $\Omega \times I$.

\begin{definition}[Admissible Product Complex]\label{def:apc}
Given a simplicial complex $X$ with geometric realization $\Omega = |X|$, a simplicial complex $K$ is called an \emph{admissible product complex} of $\Omega \times I$ if it satisfies the following three conditions:
\begin{enumerate}
    \item The domain underlying $K$ is exactly $\Omega\times I$: $|K| = \Omega \times I$
    \item For every simplex $\sigma \in X$, the set $\sigma \times I$ is the underlying space of a subcomplex of $K$ which we denote $K_\sigma$. 
    \item For every simplex $\sigma \in X$, the sets $\sigma \times \{0\}$ and $\sigma \times \{1\}$ are simplices of $K$. 
\end{enumerate}
\end{definition}

\begin{remark}
    In \cite[Lemma 19.1]{Munkres1984}, it is shown that an admissible product complex always exists for any simplicial complex $X$. The argument is constructive, and an example of the construction is given in Figure \ref{fig:Munkres}. An alternative construction can be derived from the discussion in \cite[Chapter 2]{Hatcher2002} and is also illustrated in Figure \ref{fig:Hatcher}. We detail this construction in Appendix \ref{app:ChainHomotopy}. Moreover, one can ``stack'' these constructions on top of each other to obtain other admissible product complexes as also illustrated in Figure \ref{fig:stack}.
\end{remark}

\begin{definition}[Product complex inclusions]
Given a simplicial complex $X$ and admissible product complex $K$, we let $j_0, j_1: X \to K$ be the canonical inclusions of $X$ into $K$ at $t=0$ and $t=1$, respectively. Explicitly, $j_0$ and $j_1$ are the simplicial maps induced by the respective vertex maps $\widetilde{j}_0, \widetilde{j}_1 : X^0\to K^0$ such that $\widetilde{j}_0(v) = [(v,0)]$ and $\widetilde{j}_1(v) = [(v,1)]$ for any $v\in X^0$. 
\end{definition}

\begin{lemma}\label{lem:chain_homotopy_existence}
Let $X$ be a simplicial complex. Then, there exists an admissible product complex $K$ and map $E:\C_k(X) \to \C_{k+1}(K)$ such that
\begin{equation}\label{eq:chainhomotopy}
    \partial E + E \partial = (j_1)_\# - (j_0)_\# .
\end{equation}
\end{lemma}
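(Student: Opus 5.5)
We construct $K$ and $E$ explicitly from the Hatcher-style prism decomposition, using a fixed vertex ordering. Fix a total order on the vertex set $X^0$, and write every simplex of $X$ with increasing vertices, $\sigma=[v_0,\dots,v_k]$ with $v_0<\dots<v_k$. Set $w_i=(v_i,0)$ and $u_i=(v_i,1)$. Let $K$ be the collection of all simplices
$$[w_0,\dots,w_i,u_i,\dots,u_k],\qquad [v_0,\dots,v_k]\in X,\ 0\le i\le k,$$
together with all of their faces.

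\textbf{Admissibility.} We verify the three conditions of Definition \ref{def:apc}.
\begin{itemize}
\item For a single simplex $\sigma$, these $k+1$ simplices are the standard staircase triangulation of the prism $\sigma\times I$. Their union is $\sigma\times I$, and they meet along common faces; this is standard, as in \cite[Chapter 2]{Hatcher2002} or Appendix \ref{app:ChainHomotopy}.
\item Because the order on $X^0$ is global, the triangulation of $\sigma\times I$ restricted to $\tau\times I$, for a face $\tau$ of $\sigma$, is exactly the staircase triangulation of $\tau\times I$. Hence the pieces glue to a simplicial complex with $|K|=\Omega\times I$, and each $\sigma\times I$ underlies a subcomplex $K_\sigma$.
\item The simplices $\sigma\times\{0\}=[w_0,\dots,w_k]$ and $\sigma\times\{1\}=[u_0,\dots,u_k]$ are faces of the $i=k$ and $i=0$ prism simplices respectively, so they lie in $K$.
\end{itemize}

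\textbf{The operator $E$.} Define $E$ on oriented simplices, with the increasing order, by
$$E\sigma=\sum_{i=0}^k(-1)^i[w_0,\dots,w_i,u_i,\dots,u_k],$$
and extend linearly. If $\sigma$ is given with another orientation, multiply by the sign of the reordering permutation so that $E$ is well defined on $\C_k(X)$. It suffices to check \eqref{eq:chainhomotopy} on increasingly ordered simplices, since both sides are linear and compatible with that sign.

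\textbf{Verifying \eqref{eq:chainhomotopy}.} Expand $\partial E\sigma$ as a double sum over $i$ and over the deleted vertex, and sort the terms into three groups.
\begin{itemize}
\item Deleting $w_j$ (with $j<i$) or $u_j$ (with $j>i$) gives terms that match, term by term, $-E\partial\sigma$. The faces of $\sigma$ are again increasingly ordered, so their prisms use the same staircase pattern.
\item Deleting $w_i$ from the $i$-th simplex and deleting $u_{i-1}$ from the $(i-1)$-th simplex both give $[w_0,\dots,w_{i-1},u_i,\dots,u_k]$. The two occurrences carry opposite signs and cancel.
\item What survives is the term $[u_0,\dots,u_k]$, from deleting $w_0$ in the $i=0$ simplex, and $-[w_0,\dots,w_k]$, from deleting $u_k$ in the $i=k$ simplex. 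These are $(j_1)_\#\sigma-(j_0)_\#\sigma$.
\end{itemize}

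For $k=0$ the formula reduces to $E[v]=[(v,0),(v,1)]$, whose boundary is $(j_1)_\#[v]-(j_0)_\#[v]$. This agrees with the identity because $E\partial=0$ in degree $0$.

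The sign bookkeeping is the classical prism-operator computation and is routine. The main obstacle is the compatibility step: showing that the staircase simplices from different $\sigma$ intersect in common faces, so that $K$ is genuinely a simplicial complex. The global vertex order is exactly what guarantees this, because the triangulation of each shared face $\tau\times I$ depends only on the vertices of $\tau$ and their order.
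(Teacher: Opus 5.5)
Your proposal is correct and follows essentially the same route as the paper. The paper likewise takes $K$ to be the staircase triangulation of $\Omega\times I$ induced by a (locally) total vertex order (Definition \ref{def:pc}) and sets $E(\sigma)=\sum_{i}(-1)^i\tau_i$ (Example \ref{ex:example_product_complex}); Appendix \ref{app:ChainHomotopy} then verifies \eqref{eq:chainhomotopy} by the same prism-operator cancellation, leaving $\tau_0^0-\tau_k^{k+1}=(j_1)_\#\sigma-(j_0)_\#\sigma$, while your explicit check that the staircase triangulations agree on shared faces $\tau\times I$ is a gluing point the paper leaves implicit.
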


The map $E$ provided by the above lemma is referred to as a chain homotopy between $(j_1)_\#$ and $(j_0)_\#$ in simplicial topology (see e.g. \cite[Section 12]{Munkres1984}) and is exactly the extrusion map we will use to construct the simplicial cone operator. We give a constructive proof of this lemma in Section \ref{sec:discPoincHom} using the product complexes illustrated in Figures \ref{fig:Hatcher} and \ref{fig:stack}. We note that alternative constructions are possible  (e.g. using the product complex of Figure \ref{fig:Munkres}), but we do not explore this here. 
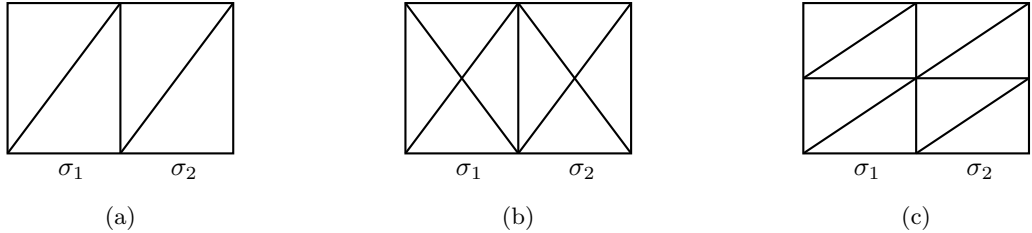
\begin{figure}[htbp]
    \centering
    \begin{subfigure}[b]{0.3\textwidth}
        \centering
        \begin{tikzpicture}[scale=1, thick]
            \coordinate (A) at (0,0);
            \coordinate (B) at (1.5,0);
            \coordinate (C) at (3,0);
            \coordinate (D) at (3,2);
            \coordinate (E) at (1.5,2);
            \coordinate (F) at (0,2);

            \draw (A) -- (C) -- (D) -- (F) -- cycle;
            \draw (B) -- (E);

            \draw (A) -- (E);
            \draw (B) -- (D);

            \node[below] at (0.85, 0) {$\sigma_1$};
            \node[below] at (2.35, 0) {$\sigma_2$};
        \end{tikzpicture}
        \caption{}\label{fig:Hatcher}
    \end{subfigure}
    \hfill
    \begin{subfigure}[b]{0.3\textwidth}
        \centering
        \begin{tikzpicture}[scale=1, thick]
            \coordinate (A) at (0,0);
            \coordinate (B) at (1.5,0);
            \coordinate (C) at (3,0);
            \coordinate (D) at (3,2);
            \coordinate (E) at (1.5,2);
            \coordinate (F) at (0,2);

            \draw (A) -- (C) -- (D) -- (F) -- cycle;
            \draw (B) -- (E);

            \draw (A) -- (E);
            \draw (B) -- (F);
            \draw (B) -- (D);
            \draw (C) -- (E);

            \node[below] at (0.85, 0) {$\sigma_1$};
            \node[below] at (2.35, 0) {$\sigma_2$};
        \end{tikzpicture}
        \caption{}\label{fig:Munkres}
    \end{subfigure}
    \hfill
    \begin{subfigure}[b]{0.3\textwidth}
        \centering
        \begin{tikzpicture}[scale=1, thick]
            \coordinate (A) at (0,0);
            \coordinate (B) at (1.5,0);
            \coordinate (C) at (3,0);
            \coordinate (D) at (3,2);
            \coordinate (E) at (1.5,2);
            \coordinate (F) at (0,2);
            
            \coordinate (M1) at (0,1);
            \coordinate (M2) at (1.5,1);
            \coordinate (M3) at (3,1);

            \draw (A) -- (C) -- (D) -- (F) -- cycle;
            \draw (B) -- (E);
            
            \draw (M1) -- (M3);

            \draw (A) -- (M2);
            \draw (B) -- (M3);
            \draw (M1) -- (E);
            \draw (M2) -- (D);

            \node[below] at (0.85, 0) {$\sigma_1$};
            \node[below] at (2.35, 0) {$\sigma_2$};
        \end{tikzpicture}
        \caption{}\label{fig:stack}
    \end{subfigure}
    \caption{Different admissible product complexes of $\Omega \times I$ where the complex $X$ consists of two edges $\sigma_1$ and $\sigma_2$ which meet at a vertex.}\label{fig:productspacecomplex}
\end{figure}

For the remainder of this section, we let $K$ and $E$ be an admissible product complex and extrusion map respectively, which exist due to Lemma \ref{lem:chain_homotopy_existence}.

\subsection{Simplicial cone operator on discrete contractible simplicial complexes}\label{sec:simp_cones}

We now provide a simplicial cone operator in the setting of the below definition.

\begin{definition}[Discrete contractibility]\label{def:discrete_contractibility}
Let $\id_{X}:X\to X$ be the simplicial map corresponding to the identity on $|X|$ and $\pi_{\C}:X\to [a]$ be the simplicial map corresponding to the vertex map $v\mapsto a$ for a given vertex $a\in X^0$. A simplicial map $\Psi: K \to X$ such that 
$$\Psi(\cdot, 0) = \pi_{\C}, \quad \Psi(\cdot,1) = \id_{X}$$
is called a \emph{discrete contraction}. If such a map exists, we say $X$ is a \emph{discrete contractible} simplicial complex.
\end{definition}
    
\begin{definition}[Simplicial cone operator on discrete contractible simplicial complexes]\label{def:gen_simp_cone}
Let $X$ be a discrete contractible simplicial complex, and let 
$\Psi: K \to X$ 
be a discrete contraction.
The simplicial cone operator (corresponding to $\Psi$) denoted $\Co : \C_k(X) \longrightarrow \C_{k+1}(X)$ is defined as follows. For any simplex $\sigma \in X^k$,
\begin{equation}
    \Co(\sigma) = (\Psi_\# \circ E)(\sigma).
\end{equation}
\end{definition}

\begin{theorem}\label{thm:simplicial-cone}
Let $X$ be a discrete contractible simplicial complex with discrete contraction $\Psi : K \to X$. Then, the operator, $\Co$, of Definition \ref{def:gen_simp_cone} corresponding to $\Psi$ is a simplicial cone operator.
\end{theorem}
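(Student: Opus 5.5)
The plan is to verify the two identities \eqref{eq:hom1simp} and \eqref{eq:hom2simp} directly from the chain homotopy identity \eqref{eq:chainhomotopy} of Lemma \ref{lem:chain_homotopy_existence}. First, apply $\Psi_\#$ to \eqref{eq:chainhomotopy}. Since $\Psi$ is a simplicial map, \eqref{eq:simplicial_boundary_sharp_commute} gives $\partial \Psi_\# = \Psi_\# \partial$ on chains of $K$. This yields, for any $\sigma\in X^k$,
\begin{equation*}
\partial_{k+1}\Co_k(\sigma) + \Co_{k-1}(\partial_k\sigma) = \Psi_\#\partial E\sigma + \Psi_\# E\partial\sigma = (\Psi\circ j_1)_\#(\sigma) - (\Psi\circ j_0)_\#(\sigma),
\end{equation*}
where functoriality \eqref{eq:simplicial_map_functoriality} has been used to combine $\Psi_\#(j_i)_\# = (\Psi\circ j_i)_\#$. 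For $k=0$, the term $E\partial\sigma$ is absent, since $\partial_0=0$.

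Next, identify the composite simplicial maps. By the definition of $j_0,j_1$ as the vertex inclusions $v\mapsto (v,0)$ and $v\mapsto(v,1)$, and by the hypothesis $\Psi(\cdot,0)=\pi_{\C}$ and $\Psi(\cdot,1)=\id_X$, we get $\Psi\circ j_1=\id_X$ and $\Psi\circ j_0 = \pi_{\C}$ as simplicial maps, since they agree on vertices. Hence $(\Psi\circ j_1)_\#(\sigma)=\sigma$. For $(\Psi\circ j_0)_\#$, the definition of the induced chain map gives $(\pi_{\C})_\#\sigma = 0$ whenever $k\ge 1$, because the images $a,\dots,a$ of the vertices of $\sigma$ are not distinct. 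For $k=0$ it gives $(\pi_{\C})_\#(\sigma)=[a]=a$. Substituting these yields \eqref{eq:hom1simp} for $k\ge1$ and $\partial_1\Co_0(\sigma)=\sigma-a$, which is \eqref{eq:hom2simp}.

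There are two points I would check carefully. The first is that $\Co$ indeed lands in $\C_{k+1}(X)$: $E\sigma\in\C_{k+1}(K)$ and $\Psi_\#$ maps $\C_{k+1}(K)\to\C_{k+1}(X)$, so this holds. The second, and the main subtlety, is the interpretation of $\Psi(\cdot,0)=\pi_{\C}$ and $\Psi(\cdot,1)=\id_X$ as the statements $\Psi\circ j_0=\pi_{\C}$ and $\Psi\circ j_1=\id_X$ at the level of simplicial maps, so that the induced chain maps coincide. This uses condition (3) of Definition \ref{def:apc}, which guarantees that $\sigma\times\{0\}$ and $\sigma\times\{1\}$ are simplices of $K$, so that $j_0,j_1$ are genuine simplicial maps. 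Everything else is a formal consequence of naturality of $\partial$ with respect to simplicial chain maps.
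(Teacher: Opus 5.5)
Your proposal is correct and follows essentially the same route as the paper: apply $\Psi_\#$ to the chain homotopy identity \eqref{eq:chainhomotopy}, commute $\partial$ past $\Psi_\#$, combine via functoriality into $(\Psi\circ j_1)_\# - (\Psi\circ j_0)_\#$, and then use $\Psi\circ j_1=\id_X$, $\Psi\circ j_0=\pi_{\C}$, with $(\pi_{\C})_\#\sigma=0$ for $k\ge 1$ and $(\pi_{\C})_\#v=a$ for $k=0$. Your remark that condition (3) of Definition \ref{def:apc} is what makes $j_0,j_1$ genuine simplicial maps is a small clarification the paper leaves implicit.
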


\begin{proof}

We have for $\sigma\in X^k$, $k\geq 1$, 
\begin{alignat*}{2}
\partial\Co(\sigma) + \Co(\partial\sigma) &= \partial(\Psi_\# E)(\sigma) + (\Psi_\# E)(\partial \sigma)\\
&= \Psi_\#(\partial E + E\partial)(\sigma) \quad &&\text{by \eqref{eq:simplicial_boundary_sharp_commute}} \\
&= \Psi_\#\bigl((j_1)_\# - (j_0)_\#\bigr)(\sigma) \quad &&\text{by \eqref{eq:chainhomotopy}} \\
&=  (\Psi\circ j_1)_\#(\sigma) - (\Psi\circ j_0)_\#(\sigma) \quad &&\text{by \eqref{eq:simplicial_map_functoriality}} \\
&= (\id_{X})_\#(\sigma) - (\pi_\C)_\#(\sigma) = \sigma \quad &&\text{since $\Psi \circ j_1 = \id_{X}$ and $\Psi \circ j_0 = \pi_{\C}$},
\end{alignat*}
where we used that $(\pi_{\C})_{\#} \sigma = 0$ for all $\sigma\in X^k$ when $k\geq 1$ which follows from the definitions of $\pi_{\C}$ and the induced simplicial chain map. 

The $k=0$ case is similar, but now uses the fact that $(\pi_{\C})_\#(v) = \pi_{\C}(v) =  a$ for all $v\in X^0$, which again follows from the definitions of $\pi_{\C}$ and the induced simplicial chain map.
\end{proof}

We note that finding an explicit discrete contraction may be difficult in practice for an arbitrary simplicial complex. We give a way of constructing such discrete contractions in Section \ref{sec:strong_collapse_disc_contr} under certain assumptions on the simplicial complex. In comparison, it is often easier to define the smooth contraction for a given contractible domain $\Omega$. Hence, we provide in the next section a singular cone operator which leverages the smooth contraction through the use of singular simplices.

\subsection{Singular cone operator on Lipschitz contractible domains}\label{sec:Whitney_based_discrete_op}

\begin{definition}[Lipschitz contractible domain]
    We say a domain $\Omega\subset\R^n$ is \emph{Lipschitz contractible} to a point $a\in\Omega$ if there exists a Lipschitz map $\Phi:\Omega\times I \to \Omega$ where $I=[0,1]$ such that $\Phi(x,1) = x$ and $\Phi(x,0) = a$. 
\end{definition}

We now construct a singular cone operator in the setting of the above definition. To this end, recall that we can identify a geometric $k$-simplex, $\sigma$, with its linear singular representation $L\sigma$. With a slight abuse of notation, we let $L$ be an operator which acts on simplicial chains and outputs the corresponding linear singular chain. That is, $L:\C_k(X) \to \Ss_k^{\lin}(X)$ is defined on oriented simplices $\sigma\in X^k$ by $L(\sigma) = L\sigma$ and extended linearly. We now introduce the singular cone operator.

\begin{definition}[Singular cone operator on Lipschitz contractible domains]\label{def:Pgen}
    Let $\Omega$ be a Lipschitz contractible domain with Lipschitz contraction $\Phi:\Omega\times I \to \Omega$ which is the geometric realization of the simplicial complex $X$. We define $\Co^\Ss: \C_k(X) \to \Ss_{k+1}^{\sm}$ by setting
    \begin{equation}
        \Co^\Ss(\sigma) = (\Phi^\# \circ L\circ E) (\sigma).
    \end{equation}
\end{definition}

\begin{theorem}\label{lem:singHomotopy}
The operator defined in Definition \ref{def:Pgen} is a singular cone operator.
\end{theorem}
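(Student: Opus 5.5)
The plan mirrors the proof of Theorem~\ref{thm:simplicial-cone}, with the simplicial chain map $\Psi_\#$ replaced by the composite $\Phi^\#\circ L$. First, $\Co^\Ss$ lands in the right space. For a simplex $\rho\in K$, $L\rho$ is an affine map $Q_{k+1}\to\Omega\times I$, and $\Phi$ is Lipschitz. Hence $\Phi\circ L\rho$ is a Lipschitz singular simplex in $\Omega$, and $\Co^\Ss(\sigma)\in\Ss_{k+1}^{\sm}(\Omega)$.

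Next I record two commutation facts.
\begin{enumerate}
\item $L$ is a chain map from simplicial to linear singular chains: $\partial^\Ss L = L\partial$. This follows by comparing \eqref{def:partialSslin} with the simplicial boundary formula on an oriented simplex, since both use the same vertex ordering and signs.
\item $\partial^\Ss\Phi^\# = \Phi^\#\partial^\Ss$, which is the commutation of induced singular chain maps with the boundary, valid for any continuous map.
\end{enumerate}

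For $\sigma\in X^k$ with $k\ge1$, these facts and Lemma~\ref{lem:chain_homotopy_existence} give
\[
\partial^\Ss\Co^\Ss(\sigma)+\Co^\Ss(\partial\sigma)=\Phi^\# L(\partial E+E\partial)(\sigma)=\Phi^\# L\bigl((j_1)_\#\sigma-(j_0)_\#\sigma\bigr).
\]
Since $j_1,j_0$ are injective on vertices, $(j_i)_\#\sigma=\sigma\times\{i\}$ as an oriented simplex of $K$. So $L((j_i)_\#\sigma)$ is the affine map $x\mapsto(L\sigma(x),i)$. Composing with $\Phi$ gives $\Phi^\# L((j_1)_\#\sigma)=\Phi(\cdot,1)\circ L\sigma=L\sigma$, because $\Phi(x,1)=x$. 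Likewise, $\Phi^\# L((j_0)_\#\sigma)$ is the constant map $Q_k\to\{a\}$.

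The main obstacle, and the one real departure from the simplicial argument, is this last term. In the simplicial case $(\pi_\C)_\#\sigma=0$ by definition. Here, however, the constant singular simplex $\Phi\circ L((j_0)_\#\sigma)$ is a genuine nonzero element of the free vector space $\Ss_k$, so \eqref{eq:hom1sing} does not hold literally. I would handle this in one of two ways. One is to interpret singular chains modulo degenerate (rank-deficient) ones, which is consistent with Remark~\ref{rem:deg}. The other is to note that every integral over such a chain vanishes, since the pullback of any $k$-form with $k\ge1$ by a constant map is zero. The second is all that Proposition~\ref{thm:poincareWhit} actually uses, because \eqref{eq:hom1sing} enters that proof only under an integral. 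I would state this caveat explicitly, or adopt the quotient convention, so that \eqref{eq:hom1sing} holds.

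For $k=0$ and $\sigma=[v]$, $E\partial$ is absent. The same computation gives $\partial_1^\Ss\Co_0^\Ss(v)=\Phi^\#L([(v,1)]-[(v,0)])=L[v]-L[a]$, since $\Phi(v,0)=a$. Here the $0$-simplex at $a$ is a genuine point and not degenerate, so \eqref{eq:hom2sing} holds exactly.
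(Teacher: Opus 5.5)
Your proposal is correct and follows essentially the same route as the paper: commute $\partial^\Ss$ past $\Phi^\#\circ L$, apply \eqref{eq:chainhomotopy}, and evaluate the two ends $\Phi\circ L((j_1)_\#\sigma)=L\sigma$ and $\Phi\circ L((j_0)_\#\sigma)$. Your caveat about the $j_0$ term is well taken. The paper simply writes $\Phi\circ L[(\sigma,0)]=0$ for $k\ge 1$, but this is literally a constant (degenerate) singular simplex that vanishes only under integration, and vanishing under integration is all that Proposition~\ref{thm:poincareWhit} needs.
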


\begin{proof}
    For $\sigma\in X^k$, we compute using \eqref{eq:singular_boundary_sharp_commute} and \eqref{def:partialSslin},
    \begin{align*}
        \partial^\Ss \Co^\Ss(\sigma) &= \partial^\Ss \Phi^\# (L (E(\sigma))) = \Phi^\#(\partial^\Ss L(E(\sigma))) = \Phi^\#(L(\partial E(\sigma))).
    \end{align*}
    Therefore, by \eqref{eq:chainhomotopy}, we have
    \begin{align*}
        \Co^\Ss(\partial \sigma)+ \partial^\Ss \Co^\Ss(\sigma) &= \Phi^\#(L(E(\partial\sigma) + \partial E(\sigma)))  \\
        &= \Phi^\#(L((j_1)_\#(\sigma) - (j_0)_\#(\sigma))) = \Phi \circ L[(\sigma, 1)] - \Phi\circ  L[(\sigma, 0)],
    \end{align*}
    where in the last step we used the definitions of $j_1$ and $j_0$ and the induced singular chain map. Then, from the definition of $\Phi$, we have that $\Phi \circ L[(\sigma, 1)] = L\sigma$ and $\Phi \circ L[(\sigma, 0)] = 0$ for $k\geq 1$, while $\Phi \circ L[(v, 0)] = L[a]$ for any $v\in X^0$.
\end{proof}

In the case that $\Omega$ is star-shaped with respect to the point $a$, we can in fact recover the singular cone operator of Definition \ref{def:sing_cone_star}. We detail this here. Let $\Omega$ be star shaped with respect to a point $a$. In this case, we have the explicit formula for the contraction: $\Phi(x,t) = (1-t)a + tx$.

We use the explicit product complex and extrusion given in Example \ref{ex:example_product_complex} of Section \ref{sec:discPoincHom} below. An example of this product complex is also depicted in Figure \ref{fig:Hatcher} above. For a simplex $\sigma = [v_0, \dots, v_k]$, the extrusion is given by:
$E(\sigma) = \sum_{i=0}^k (-1)^i \tau_i$
where $\tau_i = [(v_0,0), \dots, (v_i,0), (v_i,1), \dots, (v_k,1)]$.
Then, we can see that
\begin{alignat*}{1}
    \Co^\Ss(\sigma) &= \Phi \circ L(E(\sigma)) = \sum_{i=0}^k (-1)^i \Phi \circ L\tau_i\\
    &= \sum_{i=0}^k (-1)^i L[\Phi(v_0, 0), \dots, \Phi(v_i, 0), \Phi(v_i, 1), \dots, \Phi(v_k, 1)]\\
    &= L[a, v_0, \dots, v_k] - L[a, a, v_1, \dots, v_k] + \cdots + (-1)^{k} L[a, \dots,a, v_k].
\end{alignat*}
where we used crucially that $\Phi$ is a linear map in the second to last step. 

When one integrates over this singular chain, the only possibly nondegenerate term is the first term $L[a, v_0, \dots, v_k]$ which is exactly the operator in Definition \ref{def:sing_cone_star}.

\subsection{Connections between the discrete Poincar\'e operators}\label{sec:comparison}

We make some remarks comparing the two discrete Poincar\'e operators obtained from the simplicial cone operator of Definition \ref{def:gen_simp_cone} and the singular cone operator of Definition \ref{def:Pgen}.

Consider a cochain $\alpha \in \C^k$ and simplex $\sigma\in X^{k-1}$. Let $P$ be the discrete Poincar\'e operator obtained from the simplicial cone operator of Definition \ref{def:gen_simp_cone} and let $\mathsf{P}$ be the discrete Poincar\'e operator on cochains obtained from the singular cone operator of Definition \ref{def:Pgen}. Then, the two operators are such that
\begin{alignat}{1}
    \bl \mathsf{P} \alpha, \sigma \br &= \bl \dr \mathsf{P} W \alpha, \sigma \br = \int_{\Co^\Ss(\sigma)} W \alpha \label{defPPhi}\\
    \bl P \alpha, \sigma \br &= \bl \alpha, \Co(\sigma) \br = \int_{\Co(\sigma)} W\alpha. \label{defPDEC}
\end{alignat}
where we used \eqref{aux3} in the last equality. Note that the integral in \eqref{defPPhi} is over a singular chain while the integral in \eqref{defPDEC} is over a simplicial chain. Hence, the connection between the two operators reduces to the connection between $\Co^\Ss = \Phi^\# \circ L \circ E$ and $\Co = \Psi_\# \circ E$. 

Now, recall that $\Psi: K\to X$ is a simplicial map and so is in particular a piecewise linear map $\Psi: \Omega\times I \to \Omega$. This map $\Psi$ is thus a valid Lipschitz continuous contraction. Then, one can see that if $\Psi = \Phi$, the two operators will give the same result.

As a simple example, consider the simplicial complex $X$ which is the closed star of a vertex $a$. That is, $a$ is a vertex in the simplicial complex such that every other simplex in $X$ is a face of a simplex which contains $a$. The geometric realization of such a complex will always be star-shaped with respect to $a$, and hence, we can consider the straight-line contraction $\Phi(x,t)=(1-t)a+tx$. Because this straight-line contraction maps the vertices of the product complex of Definition \ref{def:pc} strictly to valid simplices in $X$, it acts as a valid simplicial discrete contraction $\Psi: K \to X$. In this setting, the resulting discrete Poincar\'e operators using $\Co^\Ss$ and $\Co$ will be equivalent.

\section{Explicit Product Complexes, Extrusions, and Discrete Contractions}\label{sec:discPoincHom}

In this section, we give a constructive proof of Lemma~\ref{lem:chain_homotopy_existence} and discuss the discrete contractibility assumption. We first provide explicit product complexes in~\S\ref{sec:product_complexes}. Then, using these product complexes, we define $E$ operators on chains which satisfy \eqref{eq:chainhomotopy} in~\S\ref{subsec:extrusions}. Finally, we show how to construct discrete contractions under strong collapsibility assumptions in Section \ref{sec:strong_collapse_disc_contr}. 

\subsection{Admissible Product Complex}\label{sec:product_complexes}

We begin by providing some notions useful to the construction of the admissible product complexes.

\begin{definition}[Locally Total Order]
A \emph{locally total order} on vertex set $X^0$ of a simplicial complex $X$ is a partial order $\leq$ on $X^0$ such that for each simplex $\sigma \in X$, $\leq$ is a total order on $\sigma^0$. A complex $X$ is called \emph{ordered} if it is equipped with a locally total order on its vertex set $X^0$. A simplex $\sigma = [v_0,\dots,v_k]$ in a ordered complex $X$ is called \emph{ordered} if $v_0 < v_1 < \cdots < v_k$. 
\end{definition}

Let $X$ be a simplicial complex and let $I = [0,1]$. Given the vertex set $X^0$ and a locally total order, then the vertex set  $X^0 \times I^0$, regarded as a poset with the Cartesian product partial order, can be equipped with a simplicial complex structure where simplices are totally ordered subsets.

\begin{definition}[A particular admissible product complex]\label{def:pc}
Let $X$ be an ordered complex, and let $I = [0,1]$ denote the $1$–simplex with ordered vertices $0 < 1$.
A particular admissible product complex is the simplicial complex whose vertices are pairs in $X^0 \times I^0$, and whose simplices are the finite
subsets $A \subseteq X^0 \times I^0$ that are totally ordered with respect to the product order
\[
(x,i) \leq (x',i') \quad \text{iff} \quad x \leq_X x' \text{ and } i \leq_I i'.
\]
\end{definition} 
In fact, this is a special case of the so-called staircase triangulation of the product of two simplices (see e.g. \cite[Chapter 6.2.3]{DeRaSa2010}).

\begin{definition}\label{def:nstep}
For a positive integer $n$, let $I_n$ denote the $(n-1)$-fold subdivision of the $1$-simplex $[0,1]$, whose maximal simplices are $\bigl[\tfrac{i}{n}, \tfrac{i+1}{n}\bigr]$, for $i = 0, 1, \dots, n-1$. The \emph{$n$-step} product complex of $\Omega \times I$ is denoted $K_{n}$ and is defined as
\[
K_n := \bigcup_{i=0}^{n-1} X \times \bigl[\tfrac{i}{n}, \tfrac{i+1}{n}\bigr],
\]
i.e., the simplicial complex obtained by stacking $n$ copies of the product complex from Definition \ref{def:pc} along the intervals of $I_n$. A simple example of $K_2$ for a simplicial complex $X$ consisting of two edges is depicted in Figure \ref{fig:stack}.
\end{definition}

\subsection{Extrusion}\label{subsec:extrusions}
Inspired by the idea in Remark~\ref{rem:DEC-hook} we define a chain homomorphism version of the extrusion of Remark~\ref{rem:DEC-hook} and then check that it satisfies the required~\eqref{eq:chainhomotopy}. This is also called the \emph{prism operator}~\cite[page 112]{Hatcher2002}.

\begin{definition}[Extrusion]\label{def:extrusion}
Given a simplicial complex $X$, let $K$ be the $n$-step product complex $K_n$ of Definition \ref{def:nstep}. Then, the extrusion map $E \colon \C_k(X) \to \C_{k+1}(K)$ is defined on oriented simplices of $X$ and extended linearly. For an oriented simplex $\sigma = [v_0,\dots,v_k]$, its extrusion is the chain
\[
E(\sigma) = \sum_{\tau \in K_\sigma^{(k+1)}} \sgn (\tau;\,K_\sigma)\,\tau,
\]
where $(K_\sigma)^{k+1}$ denotes the set of oriented $(k+1)$-simplices in the subcomplex $K_\sigma = \sigma\times I_n$ from Definition \ref{def:apc}. 

The sign factor $\sgn(\tau;\,K_\sigma) \in \{\pm 1\}$ aligns the orientation of $\tau$ with the product space. Specifically, we equip the subcomplex $K_\sigma$ with the product orientation (ensuring the inclusion $\sigma \hookrightarrow \sigma \times \{1\}$ is orientation-preserving). Meanwhile, each simplex $\tau$ receives a lexicographic orientation derived from the ordered vertices of $\sigma$ and $I_n$. The sign is $+1$ if these two orientations match, and $-1$ otherwise.
\end{definition}

\begin{example}\label{ex:example_product_complex}
Let $X$ be a simplicial complex. We consider the product complex $K_1$ ($1$-step product complex). Fix an ordered simplex $\sigma = [v_0,v_1,\dots,v_k]\in X^k$. We denote the $(k+1)$-simplices in $K_1$ as $\tau_i := [(v_0,0),\dots,(v_i,0)(v_i,1),\dots,(v_k,1)]$, for any $i\in\{0,\dots,k\}.$
Then $E(\sigma) = \sum_{i=0}^k (-1)^i \tau_i$. When $k=1$ and $\sigma = [v_0,v_1]$, the subcomplex $\sigma \times I$ has maximal simplices $[(v_0,0)(v_1,0)(v_1,1)]$ and $[(v_0,0)(v_0,1)(v_1,1)]$, and we have $E(\sigma) = [(v_0,0)(v_0,1)(v_1,1)] - [(v_0,0)(v_1,0)(v_1,1)].$
\end{example}

\begin{proposition}\label{prop:ChainHomotopy}
The operator defined in Definition \ref{def:extrusion} is a valid map for Lemma \ref{lem:chain_homotopy_existence} (i.e. it satisfies~\eqref{eq:chainhomotopy}).
\end{proposition}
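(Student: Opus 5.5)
Our plan is to first prove \eqref{eq:chainhomotopy} for the one-step complex $K_1$ by a direct computation with the explicit formula of Example~\ref{ex:example_product_complex}. We then pass to $K_n$ by telescoping over the stacked layers. Throughout we work with an ordered simplex $\sigma=[v_0,\dots,v_k]$. Since both sides of \eqref{eq:chainhomotopy} are linear, and a reordering of vertices only changes the sign of $\sigma$ (and the sign factor $\sgn(\tau;K_\sigma)$ is designed to make $E$ respect orientation), it suffices to check ordered simplices.

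\textbf{One step.} For $K_1$ we have $E(\sigma)=\sum_{i=0}^k(-1)^i\tau_i$ with $\tau_i=[(v_0,0),\dots,(v_i,0),(v_i,1),\dots,(v_k,1)]$. This is exactly Hatcher's prism operator, and we reproduce the standard computation. Expanding $\partial\tau_i$ gives two kinds of terms.
\begin{itemize}
\item Deleting $(v_j,\cdot)$ with $j\ne i$ gives terms which, after reindexing, are exactly $-E(\partial\sigma)$. Here one uses that the faces $[v_0,\dots,\widehat{v_j},\dots,v_k]$ are again ordered, so their extrusions are given by the same formula.
\item Deleting $(v_i,0)$ from $\tau_i$ or $(v_i,1)$ from $\tau_{i}$ gives the remaining terms. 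The face of $\tau_i$ obtained by deleting $(v_i,0)$ coincides with the face of $\tau_{i-1}$ obtained by deleting $(v_{i-1},1)$, and these appear with opposite signs.
\end{itemize}
The cancellation in the second group therefore telescopes, leaving only $[(v_0,1),\dots,(v_k,1)]-[(v_0,0),\dots,(v_k,0)]=(j_1)_\#\sigma-(j_0)_\#\sigma$. The main bookkeeping is tracking the sign $(-1)^{i}(-1)^{j}$ against $(-1)^{i}(-1)^{j+1}$ according to whether $j<i$ or $j>i$. We will write this out carefully, since it is where errors creep in. This gives $\partial E+E\partial=(j_1)_\#-(j_0)_\#$ on $K_1$.

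\textbf{$n$ steps.} Let $E^{(m)}$ denote the one-step extrusion into the layer $X\times[\tfrac{m}{n},\tfrac{m+1}{n}]$, and let $j_{m/n}$ be the inclusion at height $m/n$. Translating the one-step identity gives
\[
\partial E^{(m)}+E^{(m)}\partial=(j_{(m+1)/n})_\#-(j_{m/n})_\#.
\]
The $(k+1)$-simplices of $K_\sigma=\sigma\times I_n$ are exactly the disjoint union of the prisms in the individual layers. The product orientation of $K_\sigma$ restricts to the product orientation on each layer, so the signed sum in Definition~\ref{def:extrusion} equals $E=\sum_{m=0}^{n-1}E^{(m)}$. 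Summing the layer identities over $m$, the right-hand side telescopes to $(j_1)_\#-(j_0)_\#$, as required.

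\textbf{Orientation and sign check.} The one step we expect to be the main obstacle is confirming that the abstract sign $\sgn(\tau;K_\sigma)$, defined via the product orientation with $\sigma\hookrightarrow\sigma\times\{1\}$ orientation-preserving, equals $(-1)^i$ for $\tau_i$. We will verify this directly. The oriented simplex $\tau_i$ has edge vectors that can be row-reduced to $(e_{v_1}-e_{v_0},\dots)$ together with the vertical direction inserted at position $i$. Moving the vertical vector to the front (the convention that makes $\partial$ of the prism contain $+\sigma\times\{1\}$) costs $i$ transpositions, which gives $(-1)^i$. The case $k=1$ in Example~\ref{ex:example_product_complex} serves as a sanity check on the convention.
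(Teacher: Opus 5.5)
Your proposal is correct and follows essentially the paper's proof. Both reduce to the one-step complex $K_1$ and then carry out the standard prism-operator computation, in which the faces deleting $(v_j,\cdot)$ with $j\neq i$ give $-E(\partial\sigma)$ and the remaining faces telescope to $(j_1)_\#\sigma-(j_0)_\#\sigma$. Your explicit layer-by-layer telescoping for $K_n$ and your check that $\sgn(\tau_i;K_\sigma)=(-1)^i$ fill in steps the paper only asserts: its one-line reduction to $n=1$, and the formula stated without proof in Example~\ref{ex:example_product_complex}.
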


The proof of the Proposition \ref{prop:ChainHomotopy} can be found in Appendix \ref{app:ChainHomotopy}.

\subsection{Strong Collapsibility and Discrete Contractions}\label{sec:strong_collapse_disc_contr}

We now show an example of how to construct a discrete contraction on so-called \textit{strongly collapsible} (or LC-reducible) simplicial complexes \cite{Matousek08, Barmak2012}. Note that the construction in Section \ref{sec:collapse_cone} based on simplicial collapse does not immediately give rise to a global simplicial map that acts as a discrete contraction. Hence, to explicitly construct the discrete contraction $\Psi: K \to X$ required by the simplicial cone operator of Section \ref{sec:simp_cones}, we instead restrict our attention to the strictly smaller class of strongly collapsible simplicial complexes. Essentially, the notion of strong collapsibility provides the structure to induce valid simplicial maps during a collapse operation. 

\begin{definition}[Dominated Vertex and Strong Collapse]
Let $X$ be a simplicial complex. A \emph{maximal simplex} of $X$ is a simplex of $X$ which is not a face of any simplex other than itself. A vertex $v \in X^0$ is said to be \emph{dominated} by another vertex $v' \in X^0$ if every maximal simplex of $X$ that contains $v$ also contains $v'$. 

An \emph{elementary strong collapse} is the operation of obtaining a subcomplex $X' \subset X$ by removing a dominated vertex $v$ and all open simplices containing it. We denote this by $X \strongcollapse X'$. A complex $X$ is \emph{strongly collapsible} if there exists a sequence of elementary strong collapses $X=X_0\strongcollapse X_1 \strongcollapse \cdots \strongcollapse X_m = a$ reducing $X$ to a single vertex $a \in X^0$. 
\end{definition}

\begin{figure}[htpb]
    \centering
    \begin{tikzpicture}[scale=0.95]
        \tikzset{
            vertex/.style={circle, fill=black, inner sep=1.2pt},
            edge/.style={thick, black},
            face/.style={fill=blue!10}
        }

        \newcommand{\strongcollapselabel}{$\searrow\mkern-8mu\searrow$}

        \begin{scope}[xshift=0cm]
            \coordinate (A) at (0,0);
            \coordinate (V1) at (1, 0);
            \coordinate (V2) at (0.5, 1);
            \coordinate (V3) at (-0.5, 1);
            \coordinate (V4) at (-1, 0);

            \fill[face] (A) -- (V1) -- (V2) -- cycle;
            \fill[face] (A) -- (V2) -- (V3) -- cycle;
            \fill[face] (A) -- (V3) -- (V4) -- cycle;

            \draw[edge] (V1) -- (V2) -- (V3) -- (V4);
            \draw[edge] (A) -- (V1);
            \draw[edge] (A) -- (V2);
            \draw[edge] (A) -- (V3);
            \draw[edge] (A) -- (V4);

            \node[vertex, label=below:$a$] at (A) {};
            \node[vertex, label=below:$v_1$] at (V1) {};
            \node[vertex, label=above:$v_2$] at (V2) {};
            \node[vertex, label=above:$v_3$] at (V3) {};
            \node[vertex, label=below:$v_4$] at (V4) {};
            \node[below] at (0,-0.6) {$X_4=X$};
        \end{scope}

        \node at (1.65, 0.4) {\strongcollapselabel};

        \begin{scope}[xshift=3.3cm]
            \coordinate (A) at (0,0);
            \coordinate (V1) at (1, 0);
            \coordinate (V3) at (-0.5, 1);
            \coordinate (V4) at (-1, 0);

            \fill[face] (A) -- (V3) -- (V4) -- cycle;

            \draw[edge] (V3) -- (V4);
            \draw[edge] (A) -- (V1); 
            \draw[edge] (A) -- (V3);
            \draw[edge] (A) -- (V4);

            \node[vertex, label=below:$a$] at (A) {};
            \node[vertex, label=below:$v_1$] at (V1) {};
            \node[vertex, label=above:$v_3$] at (V3) {};
            \node[vertex, label=below:$v_4$] at (V4) {};
            \node[below] at (0,-0.6) {$X_3$};
        \end{scope}

        \node at (4.95, 0.4) {\strongcollapselabel};

        \begin{scope}[xshift=6.6cm]
            \coordinate (A) at (0,0);
            \coordinate (V1) at (1, 0);
            \coordinate (V3) at (-0.5, 1);

            \draw[edge] (A) -- (V1); 
            \draw[edge] (A) -- (V3); 

            \node[vertex, label=below:$a$] at (A) {};
            \node[vertex, label=below:$v_1$] at (V1) {};
            \node[vertex, label=above:$v_3$] at (V3) {};
            \node[below] at (0,-0.6) {$X_2$};
        \end{scope}

        \node at (8.15, 0.4) {\strongcollapselabel};

        \begin{scope}[xshift=9.7cm]
            \coordinate (A) at (0,0);
            \coordinate (V3) at (-0.5, 1);

            \draw[edge] (A) -- (V3);

            \node[vertex, label=below:$a$] at (A) {};
            \node[vertex, label=above:$v_3$] at (V3) {};
            \node[below] at (0,-0.6) {$X_1$};
        \end{scope}

        \node at (11.0, 0.4) {\strongcollapselabel};

        \begin{scope}[xshift=12.2cm]
            \coordinate (A) at (0,0);

            \node[vertex, label=below:$a$] at (A) {};
            \node[below] at (0,-0.6) {$X_0 = a$};
        \end{scope}

    \end{tikzpicture}
    \caption{A strong collapse sequence $X=X_4\strongcollapse X_3 \strongcollapse \cdots \strongcollapse X_0 = a$. Because $a$ belongs to every maximal simplex, it dominates all other vertices. In the first step $X_4 \strongcollapse X_3$, removing the dominated vertex $v_2$ simultaneously removes two $2$-simplices. Moreover at each strong collapse step, a vertex is removed. Both of these facts are in contrast to an elementary simplicial collapse.}
    \label{fig:strong_collapse_sequence}
\end{figure}
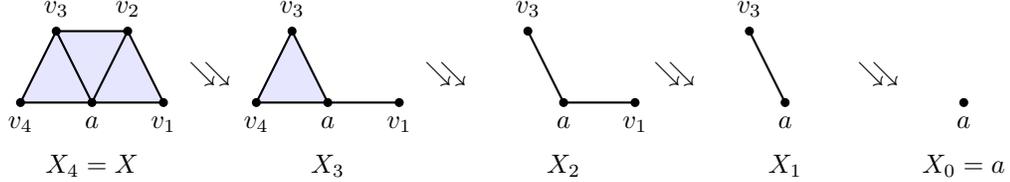

The critical advantage of a strong collapse over a standard simplicial collapse is that each elementary strong collapse naturally induces a simplicial map. If $v$ is dominated by $v'$, the map $r: X^0 \to (X')^0$ defined by $r(v) = v'$ and $r(u) = u$ for all $u \neq v$ is a valid simplicial map. We use this property to construct the discrete contraction.

Let $X$ be a strongly collapsible simplicial complex. We will show that $X$ is discretely contractible by constructing a discrete contraction $\Psi: K_m \to X$ where $K_m$ is the $m$-step admissible product complex defined in Definition \ref{def:nstep}.

Let $X = X_m \strongcollapse X_{m-1} \strongcollapse \dots \strongcollapse X_0 = a$ be a strong collapse sequence. For each step $1 \le k \le m$, the complex $X_{k-1}$ is obtained from $X_k$ by removing a vertex $v_k$ that is dominated by some vertex $v_k'$ in $X_k$. 

Let $r_k: X_k \to X_{k-1}$ be the simplicial map such that $r_k(v_k) = v_k'$ and $r_k(u) = u$ for all vertices $u \neq v_k$. We define a sequence of simplicial maps $f_k: X \to X_k$ recursively by letting  
\begin{align*}
    f_m := \id_X, \text{ and } f_{k-1} := r_{k} \circ f_{k} \quad \forall k\in \{1, \dots, m\}.
\end{align*}
Because the composition of simplicial maps is simplicial, each $f_k: X \to X_k$ is a valid simplicial map. Note that $f_0(u) = a$ for all $u \in X^0$, so $f_0 = \pi_{\mathcal{C}}$.

\begin{definition}[Discrete contraction from strong collapse]\label{def:discrete_contraction}
Let $K_m$ be the $m$-step product complex of Definition \ref{def:nstep}. We define the simplicial map $\Psi: K_m \to X$ by setting its action on vertices as:
\begin{equation*}
    \Psi(v, t_k) = f_{m-k}(v) \quad \forall (v, t_k) \in X^0\times I_m^0.
\end{equation*}
\end{definition}

\begin{proposition}
    The operator $\Psi$ defined in Definition \ref{def:discrete_contraction} is a discrete contraction. That is, $\Psi$ is a simplicial map such that 
$$\Psi(\cdot, 0) = \pi_{\C}, \quad \Psi(\cdot,1) = \id_{X}.$$
\end{proposition}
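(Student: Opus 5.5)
The plan is to check two things directly from Definition~\ref{def:discrete_contraction}: the boundary identities $\Psi(\cdot,0)=\pi_{\C}$ and $\Psi(\cdot,1)=\id_X$, and that the vertex map $(v,t)\mapsto\Psi(v,t)$ sends every simplex of $K_m$ to a simplex of $X$, so that it extends to a simplicial map.

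\textbf{Indexing convention.} I fix the labelling of the vertices of $I_m$ so that the time level carrying $f_j$ is $t=j/m$. That is, I read the definition as $\Psi(v,j/m)=f_j(v)$, with $f_0$ at $t=0$ and $f_m$ at $t=1$. If $t_k$ means $k/m$, this amounts to relabelling $t_k\leftrightarrow t_{m-k}$, i.e.\ reflecting the $I$ factor, which is what the stated boundary conditions require. With this reading the boundary identities are immediate. We have $\Psi(v,1)=f_m(v)=v$, so $\Psi(\cdot,1)=\id_X$. We also have $\Psi(v,0)=f_0(v)=a$ for every vertex, as already observed before the definition, so $\Psi(\cdot,0)=\pi_{\C}$. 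Both are simplicial maps on the slices $X\times\{0\}$ and $X\times\{1\}$.

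\textbf{Structure of simplices of $K_m$.} Every simplex of $K_m$ lies in some layer $X\times[j/m,(j+1)/m]$, which is a copy of the complex of Definition~\ref{def:pc}. A simplex there is a chain that is totally ordered for the product order. Its vertices therefore have the form
\[(w_0,\tfrac jm),\dots,(w_i,\tfrac jm),(w_{i'},\tfrac{j+1}m),\dots,(w_l,\tfrac{j+1}m),\]
where $\{w_0,\dots,w_l\}$ is totally ordered and hence the vertex set of a single simplex $\sigma\in X$. Its image under $\Psi$ is contained in $f_j(\sigma)\cup f_{j+1}(\sigma)$. Since a subset of a simplex spans a face, it suffices to prove the following claim.

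\textbf{Key claim.} For every simplex $\sigma\in X$ and every $0\le j<m$, the set $f_j(\sigma)\cup f_{j+1}(\sigma)$ is a simplex of $X$.

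\textbf{Proof of the claim.} Put $\rho=f_{j+1}(\sigma)$. This is a simplex of $X_{j+1}$, because $f_{j+1}$ is simplicial. Since $f_j=r_{j+1}\circ f_{j+1}$, the set in question equals $\rho\cup r_{j+1}(\rho)$. There are two cases.
\begin{itemize}
\item If the removed vertex $v_{j+1}\notin\rho$, then $r_{j+1}$ fixes $\rho$ and the union is just $\rho$.
\item If $v_{j+1}\in\rho$, the union is $\rho\cup\{v_{j+1}'\}$. Choose a maximal simplex $\mu$ of $X_{j+1}$ containing $\rho$. Then $\mu$ contains $v_{j+1}$, so by domination it also contains $v_{j+1}'$. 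Hence $\rho\cup\{v_{j+1}'\}$ is a face of $\mu$, which is a simplex of $X_{j+1}\subset X$.
\end{itemize}
This proves the claim, so $\Psi$ is simplicial and is a discrete contraction.

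\textbf{Main obstacle.} The only delicate step is the description of the simplices of $K_m$: each one lies over a single simplex of $X$ and spans only two adjacent time levels. This follows from the staircase, totally ordered chain structure of Definition~\ref{def:pc} together with the stacking in Definition~\ref{def:nstep}. Once this is in hand, the claim reduces everything to the single domination argument above.
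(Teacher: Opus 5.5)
Your proposal is correct and follows essentially the same route as the paper. Both arguments put a simplex of $K_m$ inside a single slab, so that its image lies in $f_j(\sigma)\cup f_{j+1}(\sigma)$, and then handle the two cases $v_{j+1}\notin\rho$ and $v_{j+1}\in\rho$ using a maximal simplex and domination. Your relabelling of the time levels, so that level $j/m$ carries $f_j$, correctly repairs an index inconsistency in Definition~\ref{def:discrete_contraction}, and it is the convention the paper's own proof implicitly uses.
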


\begin{proof}
We must verify that $\Psi$ induces a valid simplicial map on the entirety of $K_m$. Let $\Sigma \in K_m^p$ be an arbitrary $p$-simplex in the product complex $K_m$. By Definition \ref{def:nstep}, $\Sigma$ must lie entirely within a single slab $X \times [t_{k-1}, t_k]$ for some time step $k\in\{1, \dots, m\}$.

By the total order defined on the product complex (Definition \ref{def:pc}), the vertices of $\Sigma$ must form a strictly increasing sequence $v_0 < v_1 < \dots < v_p$, where each $v_\ell = (y_\ell, \tau_\ell) \in X^0 \times I_m^0$. Because the temporal coordinates $\tau_\ell \in \{t_{k-1}, t_k\}$ can increase at most once, the spatial coordinates form a non-decreasing sequence $y_0 \le_X y_1 \le_X \dots \le_X y_p$ that can contain at most one repeated vertex (which occurs if and only if the temporal coordinate increases at that exact step). Consequently, the set of distinct spatial vertices $\{y_0, \dots, y_p\}$ spans a valid simplex $\sigma \in X$ of dimension either $p$ or $p-1$.  
Applying the vertex map $\Psi$ to $\Sigma^0$ yields the set:
\begin{equation*}
    \Psi(\Sigma^0) = \{\Psi(y_0, \tau_0), \dots, \Psi(y_p, \tau_p)\}.
\end{equation*}
Because $\tau_\ell \in \{t_{k-1}, t_k\}$, each $\Psi(y_\ell, \tau_\ell)$ evaluates to either $f_{k-1}(y_\ell)$ or $f_k(y_\ell)$. Thus, the mapped set is strictly a subset of $f_{k-1}(\sigma^0) \cup f_k(\sigma^0)$.

Now, to prove that $\Psi$ is a simplicial map, we must show that $\Psi(\Sigma^0)$ spans a valid simplex in $X$. Let $s := f_k(\sigma)$, which is a simplex in $X_k$. Then $r_k(s) = f_{k-1}(\sigma)$. Therefore, $\Psi(\Sigma^0)$ is contained in the vertex set of $s \cup r_k(s)$.
Hence, we now show that $s \cup r_k(s)$ spans a valid simplex in $X_k$ by examining the action of the map $r_k$ on $s$. To this end, we recall that $v_k$ is the dominated vertex in $X_k$ and consider two cases.
\begin{enumerate}
    \item  If $v_k \notin s^0$, then $r_k$ acts as the identity on $s$. Thus, $s \cup r_k(s) = s \cup s = s$, which is trivially a valid simplex in $X_k$.
    \item If $v_k \in s^0$, we can write $s = [\eta, v_k]$ where $\eta$ is the face of $s$ not containing $v_k$. Because $v_k$ is dominated by $v_k'$ in $X_k$, there exists a maximal simplex $\Sigma' \in X_k$ containing $s$. Moreover, by the definition of domination, $v_k'$ is also a vertex of $\Sigma'$. Therefore,  $[s, v_k'] = [\eta, v_k, v_k']$ is a face of $\Sigma'$ and is consequently a valid simplex in $X_k$. We conclude by noting that $r_k(s) = [\eta, v_k']$ so $s \cup r_k(s)$ is precisely $[\eta, v_k, v_k']$.
\end{enumerate}

In all cases, the mapped vertices $\Psi(\Sigma^0)$ form a subset of a valid simplex in $X_{k} \subseteq X$. Thus, $\Psi: K_m \to X$ is a well-defined simplicial map.
Furthermore, $\Psi(\cdot, 1) = f_m = \id_X$ and $\Psi(\cdot, 0) = f_0 = \pi_{\mathcal{C}}$ as required. 
\end{proof}

\section{The discrete Bogovski\u{\i} operator on Whitney forms}\label{sec:Bog}

We now suppose the domain $\Omega$ is star-shaped with respect to a point $a\in\Omega$ to define the discrete Bogovski\u{\i} operator. With Section \ref{sec:discPoincWhitsimple} in mind, for $\sigma = [v_0, \dots, v_k]$, we use as a definition:
\begin{equation*}
    \Co^\Ss(\sigma) = L[a, v_0, \dots, v_k].
\end{equation*}

\subsection{Singular Infinite Cones}

In order to define the Bogovski\u{\i} operator, we will need to introduce a notion of singular infinite cones. To this end, we let $O_k$ be the nonnegative $k$-orthant embedded in $\R^n$. Explicitly, this is the set
\begin{equation*}
    O_k = \left\{(x_1, \dots, x_n)\in\R^n: x_i \geq 0 \text{ for } 1\leq i\leq k \text{ and } x_i = 0 \text{ for } k+1\leq i \leq n \right\}.
\end{equation*}
For a given topological space, $E$, we let a singular cone be any mapping $T:O_k\to E$. We will take $E=\R^n$ in all that follows. All we will need are the linear singular cones. To formalize this, we first define for an arbitrary set of points $a_0, \dots, a_k$ in $\R^n$,  a corresponding infinite cone
\begin{equation*}
    [a_0, \dots, a_k]^{\vee} = \left\{a_0+\sum_{i=1}^k c_i (a_i-a_0): c_i\in\R_{\ge0}, \forall\, 1\leq i \leq k\right\} ,
\end{equation*}
where $\mathbb{R}_{\ge0}$ is the set of nonnegative real numbers.

Now, for such $a_0, \dots, a_k$ in $\R^n$, there is a unique affine map $L^\vee[a_0, \dots, a_k]:O_k \to [a_0, \dots, a_k]^{\vee}$ which sends the origin to $a_0$ and maps the rays of the orthant which start at the origin and go in the direction of the principal axes to the rays which start at $a_0$ and go in the direction of the vectors $(a_i-a_0)$. We call $L^\vee[a_0, \dots, a_k]$ the linear singular cone determined by $\{a_0, \dots, a_k\}$. In particular, $L^\vee[a_0, \dots, a_k]$ has the same explicit representation as \eqref{eq:lrep} but with the domain now being $O_k$. Some illustrations are given in Figure \ref{fig:singularcones}.



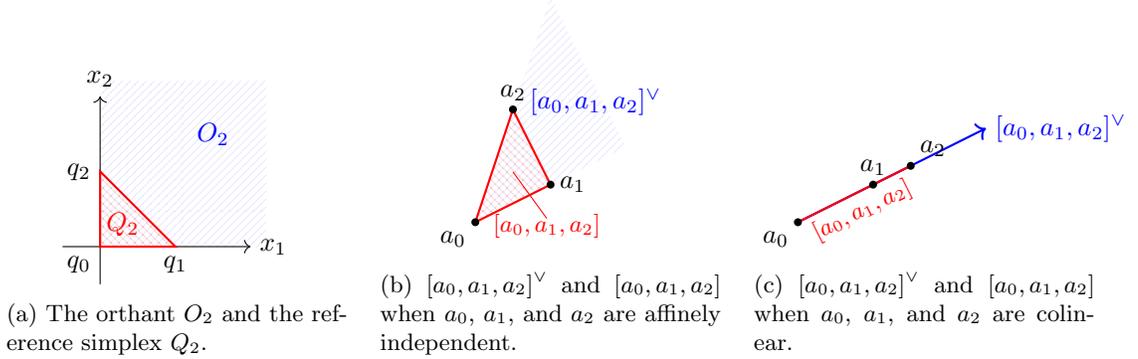
\begin{figure}[ht]
    \centering
    \begin{subfigure}[b]{0.3\textwidth}
        \centering
        \begin{tikzpicture}[scale=1]
            \fill[pattern=north east lines, pattern color=blue, opacity=0.3] (0,0) rectangle (2.2,2.2);
            \node[blue] at (1.5,1.5) {$O_2$};

            \draw[->] (-0.5,0) -- (2,0) node[right] {$x_1$};
            \draw[->] (0,-0.5) -- (0,2) node[above] {$x_2$};

            \fill[pattern=north west lines, pattern color=red, opacity=0.4] (0,0) -- (1,0) -- (0,1) -- cycle;
            \draw[thick, red] (0,0) -- (1,0) -- (0,1) -- cycle;
            \node[red] at (0.3,0.3) {$Q_2$};
            \node[below left] at (0,0) {$q_0$};
            \node[below] at (1,0) {$q_1$};
            \node[left] at (0,1) {$q_2$};
        \end{tikzpicture}
        \caption{The orthant $O_2$ and the reference simplex $Q_2$.}
        \label{fig:orthant_simplex}
    \end{subfigure}
    \hspace{2mm} 
    \begin{subfigure}[b]{0.3\textwidth}
        \centering
        \begin{tikzpicture}[scale=1]
            \coordinate (a0) at (0,0);
            \coordinate (a1) at (1,0.5);
            \coordinate (a2) at (0.5,1.5);

            \fill[pattern=north east lines, pattern color=blue, opacity=0.3] (a0) -- (2.0,1.0) -- (1.0,3.0) -- cycle;
            \node[blue] at (1.6,1.6) {$[a_0, a_1, a_2]^\vee$};

            \fill[pattern=north west lines, pattern color=red, opacity=0.4] (a0) -- (a1) -- (a2) -- cycle;
            \draw[thick, red] (a0) -- (a1) -- (a2) -- cycle;
            
            \coordinate (center) at (0.5, 0.67); 
            \coordinate (labelpos) at (0.95, -0.05); 

            \node[red] at (labelpos) {\small $[a_0, a_1, a_2]$};
            \draw[thin, red] (0.95, 0.05) -- (center); 

            \fill (a0) circle (1.5pt) node[below left] {$a_0$};
            \fill (a1) circle (1.5pt) node[right] {$a_1$};
            \fill (a2) circle (1.5pt) node[above] {$a_2$};
        \end{tikzpicture}
        \caption{$[a_0,a_1,a_2]^\vee$ and $[a_0, a_1, a_2]$ when $a_0$, $a_1$, and $a_2$ are affinely independent.}
        \label{fig:non_degenerate_cone}
    \end{subfigure}
    \hspace{2mm} 
    \begin{subfigure}[b]{0.3\textwidth}
        \centering
        \begin{tikzpicture}[scale=1]
            \coordinate (a0) at (0,0);
            \coordinate (a1) at (1,0.5);
            \coordinate (a2) at (1.5,0.75);

            \draw[thick, blue, ->] (a0) -- (2.5,1.25);
            \node[blue, right] at (2.5,1.25) {$[a_0, a_1, a_2]^\vee$};

            \draw[thick, red] (a0) -- (a2) node[midway, below, sloped, red, inner sep=2pt] {\small $[a_0, a_1, a_2]$};

            \fill (a0) circle (1.5pt) node[below left] {$a_0$};
            \fill (a1) circle (1.5pt) node[above] {$a_1$};
            \fill (a2) circle (1.5pt) node[above right] {$a_2$};
        \end{tikzpicture}
        \caption{$[a_0,a_1,a_2]^\vee$ and $[a_0, a_1, a_2]$ when $a_0$, $a_1$, and $a_2$ are colinear.}
        \label{fig:degenerate_cone}
    \end{subfigure}
    \caption{Examples of different scenarios for the images of infinite singular cones.}
    \label{fig:singularcones}
\end{figure}


The orientation of any $L^\vee[a_0, \dots, a_k]$ is given in the same way as for simplices based on the ordering of $a_0, \dots, a_k$. We denote the space of oriented linear singular $k$-cones by $\Xi_k^\vee$. 

\begin{definition}[Singular Cone Chains]
    The space of linear singular $k$-cone chains is then the vector space
\begin{equation*}
    \Ss^\vee_k = \left\{\sum_{L\in \Xi_k^{\vee}} c_L L: c_L = 0 \text{ for all but finitely many }L\in\Xi_k^{\vee}\right\}.
\end{equation*}
\end{definition}

Now we define the boundary operator $\partial^{\Ss^{\vee}}: \Xi_k^{\vee} \to\Ss_{k-1}^{\vee}$ by
\begin{equation}\label{def:partialSsvee}
    \partial^{\Ss^{\vee}} L^{\vee}[a_0, \dots, a_k] =  \sum_{i=1}^k (-1)^i L^{\vee}[a_0, \dots, \widehat{a_i}, \dots, a_k],
\end{equation}
for any set of $k+1$ points $ a_0, \dots, a_k $ in $\R^n$. The definition is then extended to all of $\Ss_k^{\vee}$ by linearity.

Because the elements in $\Xi_k^\vee$ have the same representation as those in $\Xi_k$, we can define integration over singular cones in essentially the same way as for singular simplices (cf. \eqref{def:intSing}), but now accounting for the different domain $O_k$. 

Explicitly, the integral of $\omega\in L^1\Lambda^k(\R^n)$ over $L^\vee$ is
\begin{equation}\label{def:intSingC}
    \int_{L^\vee} \omega := \int_{O_k} (L^\vee)^*\omega.
\end{equation}

\begin{remark}[Degenerate Cones]\label{rem:degC}
    As in Remark \ref{rem:deg}, when the points $a_0, \dots a_k$ are not affinely independent, the Jacobian $DL^\vee[a_0, \dots a_k]$ is rank deficient, and so the integral in \eqref{def:intSingC} is zero.
\end{remark}

\begin{lemma}\label{lem:equal}
    Let $a_0, \dots, a_k \in \R^n$ and $\omega$ be a $k$-form over $\R^n$. Then 
    \begin{equation}
        (L^\vee[a_0, \dots, a_k])^* \omega|_{Q_k} = L[a_0, \dots, a_k]^*\omega.
    \end{equation}
\end{lemma}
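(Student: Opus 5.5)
The plan is to reduce the identity to the observation that the two maps agree on $Q_k$ and that pullback at a point depends only on the map's value and differential there. By construction, $L^\vee[a_0,\dots,a_k]$ and $L[a_0,\dots,a_k]$ have the same explicit formula \eqref{eq:lrep}:
\begin{equation*}
x\mapsto a_0+\sum_{i=1}^k x_i(a_i-a_0).
\end{equation*}
The only difference is the domain, which is $O_k$ for the cone and $Q_k$ for the simplex. Since $Q_k\subset O_k$, the first step is to note that the restriction of $L^\vee[a_0,\dots,a_k]$ to $Q_k$ is exactly $L[a_0,\dots,a_k]$. This is the same affine map, given by the same formula, and in particular it sends $q_i\mapsto a_i$.

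Next I recall the pointwise definition of the pullback. For $x\in Q_k$ and tangent vectors $\xi_1,\dots,\xi_k$,
\begin{equation*}
(F^*\omega)_x(\xi_1,\dots,\xi_k)=\omega_{F(x)}(DF_x\xi_1,\dots,DF_x\xi_k).
\end{equation*}
For both maps the differential is the same constant linear map $\xi\mapsto\sum_i \xi_i(a_i-a_0)$, and the base points $F(x)$ coincide. It follows that the two pullbacks agree at every point of $Q_k$.

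The only step that needs care is boundary points of $Q_k$ that are also boundary points of $O_k$, such as the faces $x_i=0$. There the differential of the map on the closed orthant has to be understood as a one-sided derivative, or as the derivative of the affine extension to all of $\R^k$. Because both maps are restrictions of the same globally defined affine map on $\R^k$, I will simply take $DF$ to be the derivative of this extension, which sidesteps the issue.

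No genuine obstacle is expected: the statement is essentially definitional, and the proof amounts to a two-line verification.
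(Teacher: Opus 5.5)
Your proposal is correct and takes the same approach as the paper. The paper's proof is the one-line observation that $L^\vee[a_0,\dots,a_k]$ and $L[a_0,\dots,a_k]$ are the same affine map on $Q_k$, so their pullbacks coincide; your pointwise pullback formula and the treatment of boundary points via the affine extension just spell this out.
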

\begin{proof}
    This is immediate since the mappings corresponding to $L[a_0, \dots, a_k]$ and $L^\vee[a_0, \dots, a_k]$ are the same.
\end{proof}

We will need the following version of Stokes' theorem on singular cone chains. 

\begin{lemma}
     Let $\omega\in L^1\Lambda^{k-1}(\R^n)$ with compact support. Then, for any $C\in \Ss_k^\vee$, it holds that
\begin{equation}\label{eq:StokesSingC}
    \int_C d \omega = \int_{\partial^{\Ss^\vee} C} \omega.
\end{equation}
\end{lemma}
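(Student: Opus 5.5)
By linearity of both sides in $C$, it suffices to prove \eqref{eq:StokesSingC} for a single linear singular cone $C = L^\vee := L^\vee[a_0,\dots,a_k]$. The plan is to truncate the infinite cone to a large finite simplex, apply the singular Stokes theorem \eqref{eq:StokesSing} there, and observe that the only extra boundary face lies outside the support of $\omega$.

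\textbf{Degenerate case.} If $a_0,\dots,a_k$ are affinely dependent, the left-hand side vanishes by Remark~\ref{rem:degC}. Each face cone $L^\vee[a_0,\dots,\widehat{a_i},\dots,a_k]$, $1\le i\le k$, is either itself degenerate (integral zero), or its image is the full $(k-1)$-dimensional affine span of the points, with every point covered. In the latter case I expect the terms on the right-hand side to cancel pairwise. Rather than track this by hand, I would handle it by the same truncation argument below, since that argument does not use nondegeneracy. The real computation is done on $Q_k$-type domains, where Remark~\ref{rem:deg} and \eqref{eq:StokesSing} already cover degenerate maps.

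\textbf{Truncation.} Fix $R>0$ and set $b_i := a_0 + R(a_i - a_0)$ for $1\le i\le k$. Then $L^\vee$ restricted to $R\,Q_k = \{x\in O_k : \sum_i x_i \le R\}$ is exactly the affine map $L[a_0,b_1,\dots,b_k]$ precomposed with the scaling $Q_k \to RQ_k$. By the change of variables (the argument of Lemma~\ref{lem:equal}),
\[
\int_{O_k\cap RQ_k} (L^\vee)^*\eta = \int_{L[a_0,b_1,\dots,b_k]} \eta
\]
for any form $\eta$, and similarly for each face. Since $\omega$ has compact support and $L^\vee$ is affine, $(L^\vee)^*\omega$ has compact support in $O_k$. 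Unless $L^\vee$ is constant in some direction, in which case the degenerate case applies, we can choose $R$ so large that $\operatorname{supp}(L^\vee)^*\omega$ is contained in $\{\sum_i x_i < R\}$. Then the left-hand side of \eqref{eq:StokesSingC} equals $\int_{L[a_0,b_1,\dots,b_k]} d\omega$. Note that $(L^\vee)^*d\omega = d(L^\vee)^*\omega$ is supported in the same region.

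\textbf{Applying Stokes.} By \eqref{eq:StokesSing} and \eqref{def:partialSslin},
\[
\int_{L[a_0,b_1,\dots,b_k]} d\omega = \int_{L[b_1,\dots,b_k]}\omega + \sum_{i=1}^k(-1)^i\int_{L[a_0,b_1,\dots,\widehat{b_i},\dots,b_k]}\omega .
\]
The $i=0$ face is the image of $\{\sum_i x_i = R\}$, which lies outside the support, so its integral vanishes. Each remaining face is the truncation of the cone $L^\vee[a_0,\dots,\widehat{a_i},\dots,a_k]$ at the same level $R$, and again the support lies inside. So these integrals equal the cone integrals, which is exactly the right-hand side \eqref{def:partialSsvee}.

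\textbf{Main obstacle.} The delicate step is the support argument when $L^\vee$ is not injective. A non-injective affine map pulls back a compactly supported form to one that may have noncompact support in $O_k$. However, in that case the pulled-back $k$-form $(L^\vee)^*d\omega$ vanishes identically by rank deficiency. The $(k-1)$-forms on the faces are then handled face by face: each face is either injective, so the truncation applies, or has rank below $k-1$, so its pullback is zero. The one subtle subcase is a rank-$(k-1)$ face of a rank-deficient cone. There I would argue via an approximating family of nondegenerate cones $a_i^\epsilon \to a_i$ and pass to the limit using dominated convergence on the truncated domains. I expect this limiting argument to require the most care; the nondegenerate case is routine.
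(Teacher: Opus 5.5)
\textbf{Where the proposal breaks.} Your argument for nondegenerate cones is correct, and it is the paper's argument. The paper also truncates $C$ and applies \eqref{eq:StokesSing}; it cuts with a large ball, while your cut at the level set $\sum_i x_i=R$ in $O_k$ is a cleaner way to make the far face $L[b_1,\dots,b_k]$ miss the support.

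The gap is the degenerate case, and it cannot be closed. Your first paragraph defers it to the truncation. The truncation defers back to it whenever $L^\vee$ has a kernel. The final fallback, approximating by nondegenerate cones, cannot work, because for some degenerate cones the identity is false.
\begin{itemize}
\item Take $k=1$ and $C=L^\vee[a_0,a_0]$. The map is constant, so $\int_C d\omega=0$, but $\int_{\partial^{\Ss^\vee}C}\omega=-\int_{L^\vee[a_0]}\omega=-\omega(a_0)$.
\item Take $n=k=2$, $C=L^\vee[0,e_1,-e_1]$ and $\omega=\phi\,dy_1$ with $\phi\in C_c^\infty(\mathbb{R}^2)$. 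The left side vanishes by rank deficiency. The two face rays add rather than cancel: $\int_{L^\vee[0,e_1]}\omega-\int_{L^\vee[0,-e_1]}\omega=\int_{\mathbb{R}}\phi(s,0)\,ds$.
\end{itemize}
Separately, your claim about face images is wrong: a nondegenerate face cone is a pointed cone, not the whole $(k-1)$-dimensional affine span.

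Your own truncation shows why the second example fails. \eqref{eq:StokesSing} holds on $L[0,Re_1,-Re_1]$. Its far face, however, is the segment from $Re_1$ to $-Re_1$, which passes through the origin and meets $\operatorname{supp}\omega$ for every $R$.

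The approximation fails in the same way. In the first example, each perturbed ray $L^\vee[a_0,a_1^\epsilon]$ with $|a_1^\epsilon-a_0|=\epsilon$ satisfies the identity with left side $-\omega(a_0)$, while the limit cone has left side $0$. The pullbacks are supported on $[0,c/\epsilon]$, so there is no uniform truncation level and no dominating function on $O_k$.

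\textbf{The correct hypothesis.} The dividing line is properness, not injectivity. Suppose $\ker DL^\vee\cap O_k=\{0\}$. Then $|DL^\vee x|\ge m|x|$ on $O_k$ for some $m>0$, and the preimage of $\operatorname{supp}\omega$ in $O_k$ is compact. Your truncation argument then goes through verbatim, even when $L^\vee$ is rank deficient, with no approximation needed:
\begin{itemize}
\item \eqref{eq:StokesSing} and Remark~\ref{rem:deg} already cover degenerate affine simplices.
\item Each face cone is the restriction of $L^\vee$ to a coordinate face of $O_k$, hence is proper as well.
\end{itemize}
When the kernel meets $O_k\setminus\{0\}$, as in both examples, the lemma can fail. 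So the lemma must be stated under this hypothesis; the paper's one-line proof also only works in this setting.

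For the cones the paper actually uses, $C=\Co^\vee(\tau)=L^\vee[a,\tau]$, the kernel is either trivial or spanned by the barycentric coordinate vector of $a$ relative to $\tau$. The hypothesis is therefore exactly $a\notin\tau$. For simplices of dimension at most $n-1$ in a pure complex, this is what Assumption~\ref{assump:a} provides. It is needed already for $k<n$: for $k=0$ with $a$ a vertex, $\Co^\vee(a)=L^\vee[a,a]$ is precisely the first counterexample.
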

\begin{proof}

It suffices to prove the result for $C = L^\vee[a_0, \dots, a_k]$ for arbitrary points $a_0, \dots, a_k$ in $\R^n$. This result follows by considering a ball $B$ which is large enough to contain the support of $\omega$, and applying the Stokes' theorem on singular simplices, \eqref{eq:StokesSing}, to the singular simplex corresponding to the points in $\{a_0\}\cup(\partial B \cap \image(C))$.
\end{proof}

\subsection{An Infinite Cone Operator}
For the discrete Bogovski\u{\i} operator, we will need the following operator $\Co^{\vee}:\C_k(X) \to \Ss_{k+1}^{\vee}$ which is such that
\begin{equation*}
    \Co^{\vee}(\sigma) = L^\vee[a,v_0, \dots, v_k]
\end{equation*}
for any $\sigma=[v_0,\dots, v_k]\in X^{(k)}$. The key result about $\Co^\vee$ is the following. 
\begin{lemma}\label{lem5.3}
It holds, 
\begin{subequations}
\begin{alignat}{2}
    \Co^{\vee}(\partial \sigma)+ \partial^{\Ss^{\vee}} \Co^{\vee}(\sigma) &= 0 \qquad && \forall \sigma \in X^{(k)},\, k\geq 1,\label{hom2a}\\
    \partial^{\Ss^{\vee}} \Co^{\vee}(\sigma) &= -L[a] \qquad && \forall \sigma \in X_0.\label{hom2b}
\end{alignat}
\end{subequations}
\end{lemma}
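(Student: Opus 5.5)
The proof is a direct computation from the definition \eqref{def:partialSsvee} of the cone boundary operator, parallel to the proof that the operator of Definition \ref{def:sing_cone_star} is a singular cone operator. The one difference is that \eqref{def:partialSsvee} omits the $i=0$ term. In the finite case, that $i=0$ face is exactly the term that produced $L\sigma$. Dropping it turns the right-hand side $L\sigma$ into $0$. In the vertex case, $-L[a]$ is all that survives.

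For $k\geq 1$, fix $\sigma=[v_0,\dots,v_k]\in X^{(k)}$ and label the points of $\Co^\vee(\sigma)=L^\vee[a,v_0,\dots,v_k]$ as $a_0=a$ and $a_{i}=v_{i-1}$ for $1\le i\le k+1$. Applying \eqref{def:partialSsvee}, the sum runs over $i=1,\dots,k+1$, and removing $a_i$ removes $v_{i-1}$. Reindexing with $j=i-1$ gives
\begin{equation*}
\partial^{\Ss^\vee}\Co^\vee(\sigma)=\sum_{j=0}^{k}(-1)^{j+1}L^\vee[a,v_0,\dots,\widehat{v_j},\dots,v_k].
\end{equation*}
On the other hand, $\partial\sigma=\sum_{j=0}^k(-1)^j[v_0,\dots,\widehat{v_j},\dots,v_k]$. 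Since $\Co^\vee$ is linear and acts on each face as $L^\vee[a,\cdot]$,
\begin{equation*}
\Co^\vee(\partial\sigma)=\sum_{j=0}^k(-1)^jL^\vee[a,v_0,\dots,\widehat{v_j},\dots,v_k].
\end{equation*}
The two sums cancel term by term, which gives \eqref{hom2a}.

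For a vertex $\sigma=[v_0]$, we have $\Co^\vee(\sigma)=L^\vee[a,v_0]$. The boundary formula \eqref{def:partialSsvee} has only the $i=1$ term, so $\partial^{\Ss^\vee}\Co^\vee(\sigma)=-L^\vee[a]$. A $0$-cone has domain $O_0=\{0\}$, so $L^\vee[a]$ is the same map as the singular $0$-simplex $L[a]$. Evaluating a $0$-form on either gives $u(a)$. This identification yields \eqref{hom2b}.

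There is no analytic obstacle. The only care needed is in the index shift, which produces the sign $(-1)^{j+1}$, and in justifying $L^\vee[a]=L[a]$. The latter is a matter of convention, consistent with Lemma \ref{lem:equal}.
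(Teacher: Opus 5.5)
Your proof is correct and follows the paper's argument: apply \eqref{def:partialSsvee} to $L^\vee[a,v_0,\dots,v_k]$ to get $\sum_{j=0}^{k}(-1)^{j+1}L^\vee[a,v_0,\dots,\widehat{v_j},\dots,v_k]=-\Co^\vee(\partial\sigma)$, and in the vertex case use the identification $L^\vee[a]=L[a]$. Your explicit index shift spells out the one-line step in the paper's proof.
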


\begin{proof}
First, let $k\geq 1$ and $\sigma=[v_0, \ldots, v_k]$. Then
\begin{align*}
     \partial^{\Ss^{\vee}} \Co^{\vee}(\sigma) &= \partial^{\Ss^{\vee}} L^\vee[a,v_0, \dots, v_k]\\
     &= \sum_{i=0}^k (-1)^{i+1} L^\vee[a, v_0, \dots, \widehat{v_i}, \dots, v_k] = - \Co^{\vee}(\partial \sigma).
\end{align*}
On the other hand for $\sigma\in X^{(0)}$, $\partial^{\Ss^{\vee}} \Co^{\vee}(\sigma) = \partial^{\Ss^{\vee}} L^\vee[a, \sigma] = -L^\vee[a] = -L[a]$ where we used that $L^\vee[a_0]= L[a_0]$ for any single point $a_0\in\R^n$.
\end{proof}

\subsection{The Discrete Bogovski\u{\i} Operator}
First, note that if $u \in \mathring{V}_h^k$ then we can extend it by zero to all of $\R^n$ and it will belong to $H\Lambda^k(\R^n)$. Let $u \in \mathring{V}_h^k$ we define $\Bo u \in V_h^{k-1}$ by setting
\begin{alignat}{1}\label{def:Bo}
\int_{\sigma} \Bo u= \int_{\Co^\Ss(\sigma)} u -\int_{\Co^\vee(\sigma)} u\qquad \sigma \in X^{(k-1)}
\end{alignat}

We will need an additional assumption for the $k=n$ case and so we first note the result for the $k<n$ case below.

\begin{theorem}\label{thm:Bog}
For all $u \in \mathring{V}_h^k$, $0\leq k \leq n-1$, it holds that $\Bo  u \in \mathring{V}_h^{k-1}$. Moreover, 
\begin{equation}\label{eq:Bog}
    \Bo d +  d \Bo =\text{id}.
\end{equation}
\end{theorem}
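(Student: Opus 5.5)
The proof has two independent parts: the homotopy identity \eqref{eq:Bog}, which is purely formal, and the boundary condition $\Bo u\in\mathring{V}_h^{k-1}$, which is geometric. Throughout, I identify $u\in\mathring{V}_h^k$ with its extension by zero to $\R^n$. This extension has compact support and single-valued tangential traces across every face, including $\partial\Omega$. I will therefore use Stokes' theorem \eqref{eq:StokesSing} on Lipschitz singular chains, and its cone version \eqref{eq:StokesSingC}, for such piecewise-polynomial forms. The justification is to apply Stokes elementwise on the pieces of the chain cut by the mesh; interior contributions cancel by tangential continuity. I take this as routine, in the same spirit as its use in Proposition~\ref{thm:poincareWhit}. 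I also note that $d$ maps $\mathring{V}_h^k$ into $\mathring{V}_h^{k+1}$, so $\Bo d u$ makes sense for $k\le n-1$.

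\emph{Homotopy identity.} Fix $\tau\in X^k$ with $k\ge1$. Using \eqref{def:Bo} and \eqref{eq:StokesSimp}, I write
\begin{equation*}
\int_\tau (d\Bo+\Bo d)u=\int_{\Co^\Ss(\partial\tau)}u-\int_{\Co^\vee(\partial\tau)}u+\int_{\Co^\Ss(\tau)}du-\int_{\Co^\vee(\tau)}du .
\end{equation*}
Next I apply \eqref{eq:StokesSing} to the third term and \eqref{eq:StokesSingC} to the fourth. This regroups the right-hand side as
\begin{equation*}
\int_{\Co^\Ss(\partial\tau)+\partial^\Ss\Co^\Ss(\tau)}u-\int_{\Co^\vee(\partial\tau)+\partial^{\Ss^\vee}\Co^\vee(\tau)}u .
\end{equation*}
By the singular cone identity of Section~\ref{sec:discPoincWhitsimple}, the first chain equals $L\tau$. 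By \eqref{hom2a}, the second chain is $0$. Hence the sum is $\int_\tau u$. For $k=0$ and a vertex $v$, I get
\begin{equation*}
(\Bo du)(v)=\int_{\partial^\Ss\Co^\Ss(v)}u-\int_{\partial^{\Ss^\vee}\Co^\vee(v)}u=\bigl(u(v)-u(a)\bigr)+u(a)=u(v),
\end{equation*}
using \eqref{eq:hom2sing} and \eqref{hom2b}. The $u(a)$ terms cancel, which is exactly why the infinite cone is subtracted. Since both sides of \eqref{eq:Bog} are Whitney forms, equality of all degrees of freedom gives the identity.

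\emph{Boundary condition.} It suffices to show that $\int_\sigma\Bo u=0$ for every $\sigma\in X^{k-1}$ with $\sigma\subset\partial\Omega$. If $a$ is coplanar with $\sigma$, both cones are degenerate and both integrals vanish by Remarks~\ref{rem:deg} and~\ref{rem:degC}. Otherwise, by Lemma~\ref{lem:equal}, the two pullbacks agree on $Q_k$. The difference of the integrals is therefore the integral of $(L^\vee[a,\sigma])^*u$ over $O_k\setminus Q_k$, that is, over the points $y=a+s(x-a)$ with $x\in\sigma$ and $s>1$.

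I claim this region misses $\operatorname{int}\Omega$; here I assume $a$ is an interior point. Suppose instead that $y\in\operatorname{int}\Omega$. Star-shapedness puts the segment $[a,y]$ in $\Omega$. Balls around $a$ and around $y$ lie in $\Omega$, and the union of segments joining them contains a neighborhood of $x$, since $x$ lies strictly between $a$ and $y$. This contradicts $x\in\partial\Omega$.

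So the region lies in $\R^n\setminus\operatorname{int}\Omega$, where the extension vanishes except possibly on $\partial\Omega$ itself. The main obstacle is the part of the cone that lies inside $\partial\Omega$, of positive $k$-dimensional measure. There the pullback involves the tangential trace of $u$ on $\partial\Omega$, which vanishes because $u\in\mathring{V}_h^k$. I would make this precise by defining the pullback of the zero-extended form through its one-sided tangential traces, which vanish on $\partial\Omega$ and outside $\Omega$. This is also where I expect the hypothesis $k\le n-1$ to be relevant, together with the separate treatment the paper announces for $k=n$. Once this is settled, $\int_{\Co^\Ss(\sigma)}u=\int_{\Co^\vee(\sigma)}u$, so every boundary degree of freedom of $\Bo u$ is zero and $\Bo u\in\mathring{V}_h^{k-1}$.
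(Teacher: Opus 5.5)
Your proposal follows the paper's proof essentially step for step. The boundary degrees of freedom go through Lemma~\ref{lem:equal} and the region $O_k\setminus Q_k$, and the identity goes through \eqref{eq:StokesSing}, \eqref{eq:StokesSingC}, \eqref{eq:hom1sing}/\eqref{eq:hom2sing} and Lemma~\ref{lem5.3}.

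Your boundary part is more careful than the paper's. The paper simply asserts that $L^\vee[a\sigma](O_k\setminus Q_k)$ lies in the complement of $\Omega$, but that set can run inside $\partial\Omega$, and the vanishing tangential trace is the right way to dispose of it. Two small repairs are needed there:
\begin{itemize}
\item Star-shapedness about $a$ does not put segments between arbitrary points of a ball around $a$ and a ball around $y$ into $\Omega$. You only need segments from $a$ itself: if $B(y,r)\subset\Omega$ and $x=a+\frac{1}{s}(y-a)$ with $s>1$, then the points $a+\frac{1}{s}(z-a)$, $z\in B(y,r)$, fill $B(x,r/s)$. This also makes the interiority of $a$ unnecessary.
\item The hypothesis $k\le n-1$ plays no role in this part; the paper reuses it verbatim for $k=n$.
\end{itemize}

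The genuine gap is your use of \eqref{eq:StokesSingC} on $\Co^\vee(\tau)=L^\vee[a,v_0,\dots,v_k]$. Your justification addresses the regularity of $u$, but the real problem is at infinity. Truncating the cone at scale $R$ gives the simplex $L[a,a+R(v_0-a),\dots,a+R(v_k-a)]$, whose far face has image $a+R(\tau-a)$. That face leaves the support of $u$ only if $a\notin\tau$. If $a\in\tau$, the cone is degenerate, so $\int_{\Co^\vee(\tau)}du=0$, while $\int_{\partial^{\Ss^\vee}\Co^\vee(\tau)}u$ need not vanish.

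Here are two counterexamples:
\begin{itemize}
\item Let $a$ be an interior vertex, $k=0$, and $u\in\mathring{V}_h^0$ the hat function at $a$. Then $\Co^\Ss(a)=L[a,a]$ and $\Co^\vee(a)=L^\vee[a,a]$ are constant maps, so $(\Bo du)(a)=0\neq 1=u(a)$. In your $k=0$ computation, the equality $\int_{\Co^\vee(v)}du=\int_{\partial^{\Ss^\vee}\Co^\vee(v)}u$ fails at $v=a$: the left side is $0$ and the right side is $-u(a)$.
\item Let $n=2$, $k=1$, and let $a$ lie in the relative interior of an interior edge $e$ with Whitney form $\phi_e$. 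Both cones over $e$ are degenerate, and a direct computation gives $\int_e(\Bo d+d\Bo)\phi_e=\int_e\phi_e-\int_\ell\phi_e=0\neq 1$, where $\ell$ is the line through $e$. The line $\ell$ meets the support of $\phi_e$ only in $e$.
\end{itemize}

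So the statement as proved needs $a\notin\tau$ for all $\tau\in X^{(k)}$, for instance Assumption~\ref{assump:a}, which the paper imposes only for $k=n$. The paper's own proof uses the same Stokes step and has the same gap. With that assumption added, the far face of every truncated cone escapes to infinity and your argument is complete.
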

\begin{proof}

We first show that $\Bo  u \in \mathring{V}_h^{k-1}$. To this end, let $\sigma\in X^{(k-1)}$ be such that $\sigma\subset\partial\Omega$. Then, we can compute using \eqref{def:Bo} and Lemma \ref{lem:equal}
\begin{align*}
    \int_{\sigma} \Bo u &= - \left(\int_{O_k} (L^\vee[a\sigma])^*u -\int_{Q_k} (L[a\sigma])^* u\right) = - \int_{O_k\setminus Q_k} (L^\vee[a\sigma])^*u.
\end{align*}
Since $u$ is supported in $\Omega$ and $L^\vee [a\sigma] (O_k\setminus Q_k)$ lies in the complement of $\Omega$, the above quantity is zero. This shows that $\Bo  u \in \mathring{V}_h^{k-1}$.

We next show \eqref{eq:Bog}. It suffices to show that
\begin{equation}
\int_{\tau} \Bo d u+  \int_{\tau} d \Bo u=\int_{\tau} u \quad \forall \tau \in X^{(k)}.
\end{equation} 

Hence, we compute for $1\leq k\leq n-1$,
\begin{alignat*}{2}
\int_{\tau} \Bo d u+  \int_{\tau} d \Bo u &= \int_{\Co^\Ss(\tau)} d u - \int_{\Co^\vee(\tau)} du +  \int_{\partial \tau} \Bo u \qquad && \text{by \eqref{def:Bo}, \eqref{eq:StokesSimp}}  \\
&= \int_{\partial^\Ss \Co^\Ss(\tau)} u - \int_{\partial^{\Ss^\vee} \Co^\vee(\tau)} u + \int_{\partial \tau} \Bo u \qquad && \text{by \eqref{eq:StokesSing}, \eqref{eq:StokesSingC}} \\
&= \int_{\partial^\Ss \Co^\Ss(\tau)} u - \int_{\partial^{\Ss^\vee} \Co^\vee(\tau)} u + \int_{\Co^\Ss(\partial \tau)} u - \int_{\Co^\vee(\partial \tau)} u \qquad && \text{by \eqref{def:Bo}} \\
&= \int_{L\tau} u = \int_{\tau} u. \qquad && \text{by \eqref{eq:hom1sing}, \eqref{hom2a}}.
\end{alignat*}
On the other hand, for $u\in V_h^0$ and $\tau\in X_0$, we have
\begin{equation*}
    \int_{\tau} \Bo d u = \int_{\Co^\Ss(\tau)} d u - \int_{\Co^\vee(\tau)} du = \int_{\partial^\Ss \Co^\Ss(\tau)} u - \int_{\partial^{\Ss^\vee} \Co^\vee(\tau)}= u(\tau),
\end{equation*}
where in the last step we used \eqref{eq:hom2sing} and \eqref{hom2b}.

\end{proof}

In the $k=n$ case, the operator in \eqref{def:Bo} no longer satisfies \eqref{eq:Bog} if $a$ lies in the boundary of $\tau$ for some $\tau\in X^{(n)}$. One can see this even when $\Omega$ is one-dimensional and $a$ is chosen to be a vertex in $X$. Hence, we need the following additional assumption.
\begin{assumption}\label{assump:a}
    We assume $a$ does not lie in any $\sigma\in X^{(n-1)}$.
\end{assumption}

\begin{remark}
    Assumption \ref{assump:a} means that $a$ does not lie on any facet of the simplicial complex. In practice, if one has $a$ lying on a facet, a simple local perturbation of the simplicial complex will again allow for Assumption \ref{assump:a} to hold.
\end{remark}

The key to the $k=n$ case are the following two lemmas below. The proof of Lemma \ref{lem:k=nkey} can be found in Appendix \ref{app:k=nlem}.

\begin{lemma}\label{lem:k=nkey}
    If $a$ is as in Assumption \ref{assump:a}, then for $\omega \in L^1\Lambda^n(\R^n)$, with compact support and $\tau\in X^{(n)}$,
    \begin{equation}\label{eq:intresult}
        \int_{\Co^\vee(\partial\tau)} \omega = \chi_\tau(a) \int_{\R^n} \omega.
    \end{equation}
    where $\chi_\tau$ is the indicator function on $\tau$. That is, $\chi_\tau(x) = 1$ for all $x\in\tau$ and $\chi_\tau(x) = 0$ otherwise.
\end{lemma}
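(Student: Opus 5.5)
The plan is to reduce \eqref{eq:intresult} to a statement about the infinite cones over the facets of a single $n$-simplex. Write $\tau=[v_0,\dots,v_n]$, so that
\[
\partial\tau=\sum_{i=0}^n(-1)^i[v_0,\dots,\widehat{v_i},\dots,v_n]
\]
and
\[
\Co^\vee(\partial\tau)=\sum_{i=0}^n(-1)^iL^\vee[a,v_0,\dots,\widehat{v_i},\dots,v_n].
\]
By Assumption \ref{assump:a}, the point $a$ is not on the affine hull of any facet $\tau_i$ of $\tau$. For a general $\tau$ this is slightly more than the assumption says, since $a$ could lie on the hyperplane of $\tau_i$ outside $\tau_i$. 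In that case, though, $L^\vee[a,\tau_i]$ is degenerate and its integral vanishes by Remark \ref{rem:degC}, so such terms can be dropped. For every nondegenerate term, the image of $L^\vee[a,\tau_i]$ is the set $\{a+s(y-a): y\in\tau_i,\ s\ge 1\}$, the ``shadow'' of $\tau_i$ seen from $a$, and the integral equals $\epsilon_i\int_{\mathrm{image}}\omega$. Here $\epsilon_i=\pm1$ records whether $L^\vee[a,\tau_i]$ preserves orientation relative to $\R^n$. The first step is therefore to express $\int_{\Co^\vee(\partial\tau)}\omega=\int_{\R^n}N(x)\,\omega$, where $N(x)=\sum_i(-1)^i\epsilon_i\chi_{\mathrm{shadow}_i}(x)$ is a signed count of the facets crossed by the ray from $a$ through $x$. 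This step only uses the change of variables formula for the affine maps $L^\vee$.

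The second step is to identify the sign $(-1)^i\epsilon_i$ geometrically. The orientation of $[a,v_0,\dots,\widehat{v_i},\dots,v_n]$ compared with that of $\tau$ is determined by which side of the hyperplane of $\tau_i$ the point $a$ lies on. Concretely, $(-1)^i\det[a,\tau_i]$ equals $\det\tau$ times the $i$-th barycentric coordinate $\lambda_i(a)$ of $a$ with respect to $\tau$, up to a positive factor; this is Cramer's rule/Cramer-type expansion of barycentric coordinates. So, orienting $\tau$ positively, each facet contributes $+1$ if $a$ is on the inner side of that facet ($\lambda_i(a)>0$) and $-1$ if it is on the outer side. Equivalently, the sign is $+1$ when the ray from $a$ exits $\tau$ through $\tau_i$ and $-1$ when it enters $\tau$ through it.

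The third step, which I expect to be the main obstacle, is to count crossings for almost every $x$, namely those whose ray avoids lower-dimensional faces (a null set). The ray from $a$ beyond $x$ is a half-line, and $\tau$ is convex, so the ray meets $\partial\tau$ at most twice, or along a face, which is a null set. I treat two cases.
\begin{itemize}
\item If $a$ is in the interior of $\tau$, the ray from $a$ meets $\partial\tau$ in exactly one exit point. So for every $x$ (a.e.) exactly one shadow contains $x$, with sign $+1$, and $N\equiv1$.
\item If $a\notin\tau$, the line through $a$ either misses $\tau$, or enters and exits it once. Points $x$ beyond the exit lie in both shadows, with signs $-1$ and $+1$. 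Points between the entry and exit, or before the entry, lie in at most one shadow, and it must be checked that they are counted correctly.
\end{itemize}

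The subtle point in the second case is that the shadow is taken only for $s\ge1$, that is, only at points beyond the facet. A point $x$ strictly between the entry and exit lies in the entry facet's shadow but not the exit facet's, giving $N(x)=-1$ there and breaking the identity. So the statement, as I read it, needs the boundary version of the sum of the shadows: before committing to the counting argument above, I recheck the definition of $L^\vee$. The cone $[a_0,\dots,a_k]^\vee$ is the full cone from $a_0$, with $c_i\ge0$ and no restriction $\sum c_i\ge1$. So the image is the entire solid cone from $a$ spanned by $\tau_i$, including the part between $a$ and $\tau_i$.

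With this correction, $N(x)$ counts the facets whose cones contain $x$, and the computation becomes:
\begin{itemize}
\item If $a\in\operatorname{int}\tau$, the cones over the facets tile $\R^n$ with sign $+1$, so $N=1$.
\item If $a\notin\tau$, every ray from $a$ that meets $\tau$ crosses one entry facet (sign $-1$) and one exit facet (sign $+1$), and every point on that ray lies in both cones. The contributions cancel and $N=0$.
\end{itemize}
Integrating gives $\chi_\tau(a)\int_{\R^n}\omega$, and the compact support of $\omega$ ensures that all integrals converge.
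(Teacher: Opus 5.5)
Once you drop the initial ``$s\ge 1$ shadows'' and use the full cones $\{a+s(y-a): y\in\tau_i,\ s\ge 0\}$, your argument is correct and essentially the paper's. It uses the same signed decomposition of the integral over the cone images, and the same Cramer/barycentric sign rule $(-1)^i\det[a,\tau_i]=\lambda_i(a)\det\tau$ (the paper's Lemma~\ref{lem:orientationrelation}). It then runs the same tiling argument when $a$ is interior to $\tau$ and a cancellation argument when $a\notin\tau$. The only difference is in the exterior case: you get the cancellation from a pointwise entry/exit count along rays from $a$, whereas the paper proves $\bigcup_{\lambda_i>0}K_i=\bigcup_{\lambda_j<0}K_j$ by an algebraic sliding argument on the conic coefficients.
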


\begin{lemma}
    For $\omega\in L^1\La^n(\Omega)$ and $\tau\in X^{(n)}$, it holds that
    \begin{equation}\label{eq:con}
        \int_{\Co^\Ss(\partial \tau)} \omega = \int_\tau \omega.
    \end{equation}
\end{lemma}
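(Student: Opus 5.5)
The plan is to use the singular cone identity \eqref{eq:hom1sing} together with the vanishing of integrals of $(n+1)$-forms. Here $\Co^\Ss$ is the singular cone operator of Definition \ref{def:sing_cone_star}, and $\tau\in X^{(n)}$ is a top-dimensional simplex. By the proposition following Definition \ref{def:sing_cone_star}, we have
\begin{equation*}
\Co^\Ss(\partial\tau) + \partial^\Ss \Co^\Ss(\tau) = L\tau .
\end{equation*}
Integrating $\omega$ over both sides gives
\begin{equation*}
\int_{\Co^\Ss(\partial\tau)}\omega = \int_{L\tau}\omega - \int_{\partial^\Ss L[a,\tau]}\omega .
\end{equation*}
It therefore suffices to show that the last integral vanishes. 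Note that $\int_{L\tau}\omega=\int_\tau\omega$ by definition.

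\textbf{Step 1.} I would not invoke Stokes here, since $\omega$ is only $L^1$ and $L[a,\tau]$ is an $(n+1)$-simplex in $\R^n$. Instead I would argue directly. The chain $\partial^\Ss L[a,\tau]$ is a sum of $n+2$ linear singular $n$-simplices. One of them is $L\tau$ itself, with sign $+1$. The others are $(-1)^{i+1}L[a,v_0,\dots,\widehat{v_i},\dots,v_n]$. So the claim is equivalent to showing that
\begin{equation*}
\sum_{i=0}^n (-1)^{i} \int_{L[a,v_0,\dots,\widehat{v_i},\dots,v_n]}\omega = \int_\tau \omega ,
\end{equation*}
with the appropriate sign conventions. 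This is just the statement that the signed simplices $[a,\partial\tau]$ tile $\tau$ with multiplicity one.

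\textbf{Step 2.} I would write $\omega = f\,dx_1\wedge\cdots\wedge dx_n$. Then $\int_{L[b_0,\dots,b_n]}\omega = \det(b_1-b_0,\dots,b_n-b_0)\int_{Q_n} f\circ L$, which is the signed integral $\int_{\R^n}\sgn\cdot\chi_{[b_0,\dots,b_n]}f$. It then suffices to prove the pointwise identity
\begin{equation*}
\sum_i (-1)^i \epsilon_i\,\chi_{[a,\widehat{\tau}_i]}(x) = \epsilon_\tau\,\chi_\tau(x)
\end{equation*}
for almost every $x$, where $\epsilon$ denotes the orientation sign. This is the classical fact that the signed-volume indicator of a simplex decomposes via a cone point. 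For a.e.\ $x$, consider the degree (winding number) of $\partial\tau$ around $x$. The cone $[a,\partial\tau]$ counts this degree via the ray from $a$ through $x$. Equivalently, the affine functions whose signs give membership satisfy the cocycle identity $\sum_{j}(-1)^j \chi_{[b_0..\hat b_j..b_{n+1}]}^{\pm}=0$ a.e. for any $n+2$ points in $\R^n$. That identity follows because almost every $x$ lies in the interior of an even pattern of these simplices, a fact checkable via barycentric coordinates or the linearity of the Lebesgue measure decomposition. Applying it with $(b_0,\dots,b_{n+1})=(a,v_0,\dots,v_n)$ gives the result.

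\textbf{Main obstacle.} The main obstacle is the clean proof of this a.e.\ signed-indicator cocycle identity, including the degenerate configurations where $a$ is coplanar with a face. Those configurations contribute zero by Remark \ref{rem:deg}, and the sets where this matters have measure zero. A slicker route I would try first is density. Both sides are continuous linear functionals of $\omega$ in $L^1$, since each is bounded by a constant times $\|f\|_{L^1}$. For smooth compactly supported $\omega$, I would write $\omega = d\eta$ with $\eta$ smooth on a ball containing all the simplices. Stokes \eqref{eq:StokesSing} then gives $\int_{\partial^\Ss L[a,\tau]} d\eta = \int_{\partial^\Ss\partial^\Ss L[a,\tau]}\eta = 0$. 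Density of smooth forms in $L^1\Lambda^n$ concludes the argument.
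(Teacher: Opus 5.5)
Your argument is correct and has the same skeleton as the paper's: the cone identity \eqref{eq:hom1sing} reduces the claim to $\int_{\partial^\Ss \Co^\Ss(\tau)}\omega = 0$. The paper gets this vanishing by applying \eqref{eq:StokesSing} directly and using $d\omega=0$ for an $n$-form on $\R^n$; your $\omega=d\eta$ together with $\partial^\Ss\partial^\Ss=0$ is the dual form of the same fact. Your density step (check smooth compactly supported forms, then pass to $L^1$ using that each simplex integral is $L^1$-continuous) is a worthwhile addition, because \eqref{eq:StokesSing} is stated only for smooth forms while the lemma is applied to piecewise-constant Whitney $n$-forms. With that step in place the pointwise indicator argument in Step 2 is unnecessary, and as written (the ``even pattern'' claim) it is not a proof.
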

\begin{proof}
    Recall that by \eqref{eq:hom1sing}, $\Co^\Ss(\partial \tau) + \partial^\Ss \Co^\Ss(\tau) = L\tau$. Hence, for any $\tau\in X^{(n)}$, by \eqref{eq:StokesSing},
    \begin{equation*}
        \int_{\Co^\Ss(\partial \tau)} \omega = \int_\tau \omega - \int_{\partial^\Ss \Co^\Ss(\tau)} \omega = \int_\tau \omega - \int_{\Co^\Ss(\tau)}  d\omega = \int_\tau \omega
    \end{equation*}
    where we used that $d = 0$ on $n$-forms in the last equality.
\end{proof}

\begin{theorem}\label{thm:Bog2}
    Under Assumption \ref{assump:a}, Theorem \ref{thm:Bog} still holds when $k=n$.
\end{theorem}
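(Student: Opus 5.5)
We must show, for $u \in \mathring{V}_h^n$, that $\Bo u \in \mathring{V}_h^{n-1}$ and that $\Bo d u + d\Bo u = u$. Since $du = 0$ for an $n$-form, this reduces to $d\Bo u = u$, i.e.
\begin{equation*}
\int_{\tau} d\Bo u = \int_\tau u \quad \forall \tau \in X^{(n)}.
\end{equation*}

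The boundary condition is handled exactly as in the proof of Theorem \ref{thm:Bog}. That argument never used $k<n$. For $\sigma \in X^{(n-1)}$ with $\sigma\subset\partial\Omega$, Lemma \ref{lem:equal} gives that $\int_\sigma \Bo u$ equals minus the integral of $(L^\vee[a\sigma])^*u$ over $O_n\setminus Q_n$. By star-shapedness, the image of $O_n\setminus Q_n$ lies outside $\Omega$, and $u$ (extended by zero) vanishes there. So this part carries over verbatim.

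For the homotopy identity, fix $\tau\in X^{(n)}$. Stokes' theorem \eqref{eq:StokesSimp} and the definition \eqref{def:Bo} give
\begin{equation*}
\int_\tau d\Bo u = \int_{\partial\tau}\Bo u = \int_{\Co^\Ss(\partial\tau)} u - \int_{\Co^\vee(\partial\tau)} u .
\end{equation*}
The two terms are then handled separately:
\begin{itemize}
\item By \eqref{eq:con}, applicable since $u$ is an integrable $n$-form on $\Omega$, the first term equals $\int_\tau u$.
\item By Lemma \ref{lem:k=nkey}, applicable thanks to Assumption \ref{assump:a} and since the zero extension of $u$ is compactly supported and integrable, the second term equals $\chi_\tau(a)\int_{\R^n} u = \chi_\tau(a)\int_\Omega u$.
\end{itemize}

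It then suffices to show $\int_\Omega u = 0$ for $u\in\mathring{V}_h^n$. This is the only genuinely new step. Because Whitney forms with vanishing traces have zero boundary degrees of freedom, $\dr^n u$ is a cochain in the image of $\dd^{n-1}$ on boundary-vanishing cochains. Equivalently, we can use the characterisation of $\mathring{V}_h^n$ as the Whitney $n$-forms $u$ that satisfy $\int_\Omega u=0$. The most direct route is to sum the degrees of freedom over all $n$-simplices: $\int_\Omega u = \sum_{\tau}\int_\tau u$. One then writes $u = d w$ with $w\in\mathring{V}_h^{n-1}$, which holds by exactness of the boundary-condition Whitney complex on a contractible domain at degree $n$ restricted to mean-zero forms. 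Stokes' theorem then gives $\int_\Omega dw=\int_{\partial\Omega} w = 0$.

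I expect the main obstacle to be fixing the precise meaning of $\mathring{V}_h^n$ in the top degree. If $\mathring{V}_h^n$ is intended as the mean-zero Whitney $n$-forms, the conclusion is immediate. Otherwise, the identity $\Bo d+d\Bo = \mathrm{id}$ can only hold on that mean-zero subspace, since $d\Bo u = u$ forces $\int_\Omega u=0$. I would therefore state and prove the $k=n$ case with the mean-zero constraint. With it, the second term vanishes and $\int_\tau d\Bo u=\int_\tau u$ for every $\tau$, which completes the proof.
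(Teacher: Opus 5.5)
Your proof is correct and follows the paper's argument essentially line for line: the boundary condition is inherited verbatim from Theorem \ref{thm:Bog}, and $\int_\tau d\Bo u=\int_\tau u-\chi_\tau(a)\int_{\R^n}u$ follows from \eqref{eq:con} and Lemma \ref{lem:k=nkey}, with the last term vanishing because $\int_\Omega u=0$. The paper simply takes this mean-zero property as part of the meaning of $u\in\mathring{V}_h^n$, which is where you also end up; you should drop your intermediate ``direct route'' via $u=dw$, because the trace condition is vacuous for $n$-forms and invoking exactness on mean-zero forms to prove mean zero is circular.
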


\begin{proof}
    The fact that $\mathcal{B}u\in \mathring{V}_h^{n-1}$ follows from the same proof as that of Theorem \ref{thm:Bog}. It remains to show \eqref{eq:Bog}. To this end, we compute
\begin{align*}
    \int_\tau d\Bo u = \int_{\partial \tau} \Bo u = \int_{\Co^\Ss(\partial\tau)} u - \int_{\Co^\vee(\partial\tau)} u = \int_\tau u - \chi_\tau(a)\int_{\R^n} u 
\end{align*}
where in the last step we used \eqref{eq:con} and \eqref{eq:intresult} (and Assumption \ref{assump:a}). Then, since $u\in \mathring{V}_h^n$, it holds that $\int_{\R^n} u = \int_\Omega u = 0$,
and so we are done.
\end{proof}

\begin{remark}
    As in Definition \ref{Whitney_cochain_op}, one can also define the discrete Bogovski\u{\i} operator on cochains using the de Rham and Whitney maps. The proofs of the relevant properties follow the same structure as in Corollary \ref{cor:Whitney_cochain_op}.
\end{remark}

\section{Numerical Experiments}\label{sec:numerical_experiments}

We provide numerical experiments illustrating the application of the discrete Poincar\'e and Bogovski\u{\i} operators defined above. We first show examples of the Whitney form based and purely combinatorial discrete Poincar\'e operators on a simplicial complex triangulating the unit square using the cone operators of Section \ref{sec:cones_special_cases}. Following this, we show experiments of the two operators on a domain which is not star-shaped with respect to a point using the cone operators of Section \ref{sec:cones_general}. Finally, we show experiments for the discrete Bogovski\u{\i} operator on the simplicial complex triangulating the unit square. Illustrations of the two simplicial complexes we consider are in Figure \ref{fig:meshes} below.

\begin{figure}[htbp]
    \centering
    \begin{subfigure}[b]{0.45\textwidth}
        \centering
        \includegraphics[width=0.5\linewidth]{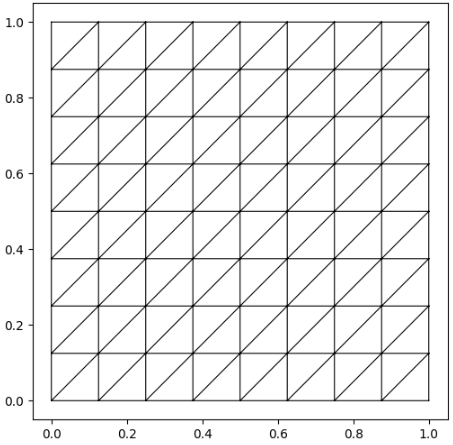}
        \caption{Simplicial complex triangulating the unit square}
        \label{fig:squaremesh}
    \end{subfigure}
    \hfill
    \begin{subfigure}[b]{0.45\textwidth}
        \centering
        \includegraphics[width=0.5\linewidth]{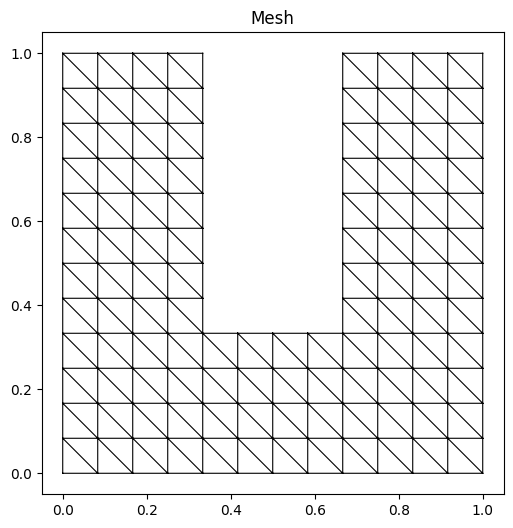}
        \caption{Simplicial complex triangulating a U-shaped domain}
        \label{fig:Umesh}
    \end{subfigure}
    \caption{Simplicial complexes triangulating a star-shaped domain and a non-star-shaped domain.}\label{fig:meshes}
\end{figure}

\subsection{Star-Shaped Domain}

We first consider $X$ to be a simplicial complex triangulating the unit square $[0,1]^2$ with a structured grid of $(N+1) \times (N+1)$ vertices. This simplicial complex is depicted in Figure \ref{fig:squaremesh} for $N=8$.

Since the domain is convex, we use the Poincar\'e operator obtained from the cone operator of Definition \ref{def:sing_cone_star}. For the operator acting on Whitney $1$-forms, the necessary line integrals are computed using simple quadrature formulas. For the operator acting on Whitney $2$-forms, the integrals are computed as a sum of the constant values on each $2$-simplex multiplied by the regions obtained from the intersections of $[a,\sigma]$ with triangles in the simplicial complex. These intersection areas are found using the Sutherland-Hodgman clipping algorithm \cite{SutherlandHodgman74}. Additionally, since the simplicial complex is simple, a collapse sequence can be easily found and so we also provide results for the cone operator built from a collapse sequence of Definition \ref{def:collapse_cone}.

To test the operators, we generated 100 random $k$-cochains for each $k\in\{0,1,2\}$. We then compute $P^1 d^0 u$ for $u\in \C^0$, $(d^0 P^1 + P^2 d^1) v$ for $v\in \C^1$, and $d^1 P^2 u$ for $w\in \C^2$. We then check the $\ell^\infty$ norm of the difference in cochain values between these quantities and $u$, $v$, and $w$ respectively (after subtracting the requisite global constant in the $k=0$ case). We see machine precision in all cases.

Now, we showcase the two operators' abilities to compute discrete 2d vector potentials. We consider $g(x,y) = x(1-x)y(1-y)$,
and let $g_h = \Pi g$ where $\Pi$ is the canonical projection onto the piecewise constant space. We find discrete vector potentials using the discrete Poincar\'e operators as ${\bf v}_h = \Po g_h$ and ${\bf w}_h = W P \dr g_h$. The output of this experiment is illustrated in Figure \ref{fig:potentials_1}.

\begin{figure}[htbp]
    \centering
    \includegraphics[width=0.8\linewidth]{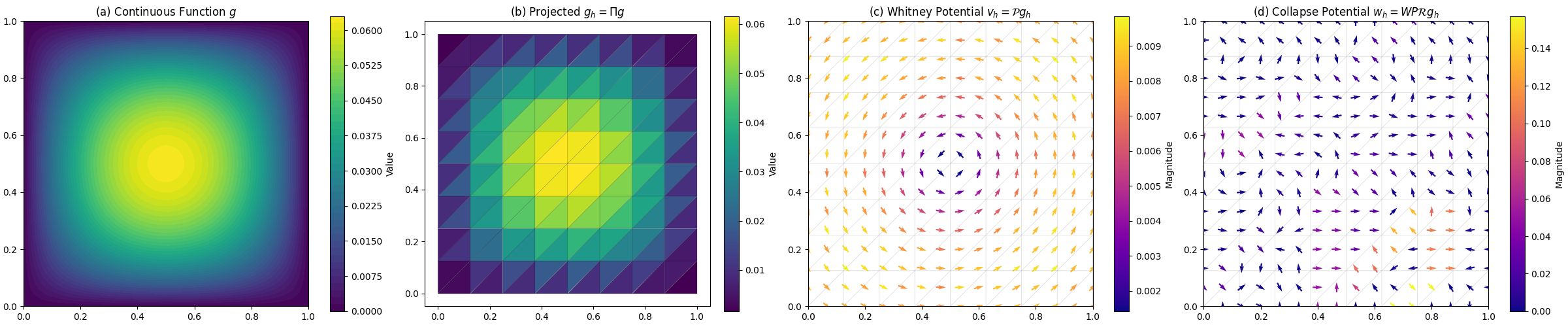}
    \caption{Discrete vector potentials on a star-shaped domain. We compute a potential using the Whitney form based discrete Poincar\'e operator using the generalized cone operator of Definition \ref{def:sing_cone_star} with point $a=(0.5,0.5)$ and the combinatorial discrete Poincar\'e operator using the collapse sequence based cone operator of Definition \ref{def:collapse_cone} where the complex is collapsing to the vertex at $(1.0,1.0)$. }
    \label{fig:potentials_1}
\end{figure}

\subsection{Non-Star-Shaped Domain}

We now consider $X$ to be a simplicial complex triangulating a U-shaped domain which is not star-shaped. This simplicial complex is depicted in Figure \ref{fig:Umesh} above.

To define the Whitney form based discrete Poincar\'e operator, we use the cone operator of Definition \ref{def:Pgen}. The requisite Lipschitz contraction is obtained by choosing a vertex $a = (a_0, a_1) \in [0.0, 1.0]\times[0.0, 0.3]$, and defining
\begin{equation}
    \Phi(x_1, x_2, t) = 
\begin{cases} 
\Big( (1 - 2t)a_1 + 2t x_1, \, a_2 \Big) & \text{for } t \in \left[0, \frac{1}{2}\right] \\
\Big( x_1, \, 2(1 - t)a_2 + (2t - 1)x_2 \Big) & \text{for } t \in \left(\frac{1}{2}, 1\right]
\end{cases}.
\end{equation}
This contraction essentially retracts any point in the U-shaped domain to the line $x_2 = a_2$ while $t \in (1/2, 1]$, and contracts this line to the point $(a_1, a_2)$ while $t \in [0, 1/2]$. Due to this specific contraction and mesh, the requisite integration for 1-forms follows mesh edges, while the requisite integration for 2-forms only involves the region corresponding to $t \in (1/2, 1]$.

A strong collapse sequence can also be easily found on this mesh and so the discrete Poincar\'e operator arising from the cone operator of Definition \ref{def:gen_simp_cone} with discrete contraction $\Psi$ built from the strong collapse sequence following Definition \ref{def:discrete_contraction}. We remark that a collapse sequence can also be easily derived on this simplicial complex, but our aim here is to showcase the discrete contraction based operator and so we use the strong collapse sequence.

We again test the discrete Poincar\'e operators by generating random $k$-cochains and checking the homotopy identities \eqref{eq:homotopy_formula} by computing the $\ell^\infty$ error as in the previous test. We see machine precision in all cases.

We also showcase the two operators' abilities to compute discrete scalar potentials in 2d. We consider the vector field $f(x,y)=(y-1/2, x-1/2)$ and let $f_h=\Pi f$ where $\Pi$ is the canonical projection onto the lowest order N\'ed\'elec space \cite{Nedelec1986}. We find discrete scalar potentials using the discrete Poincar\'e operators as $\phi_h^W = \Po f_h$ and $\phi_h^C = W P \dr f_h$. The output of this experiment is illustrated in Figure \ref{fig:potentials_2}.

\begin{figure}[htbp]
    \centering
    \includegraphics[width=0.8\linewidth]{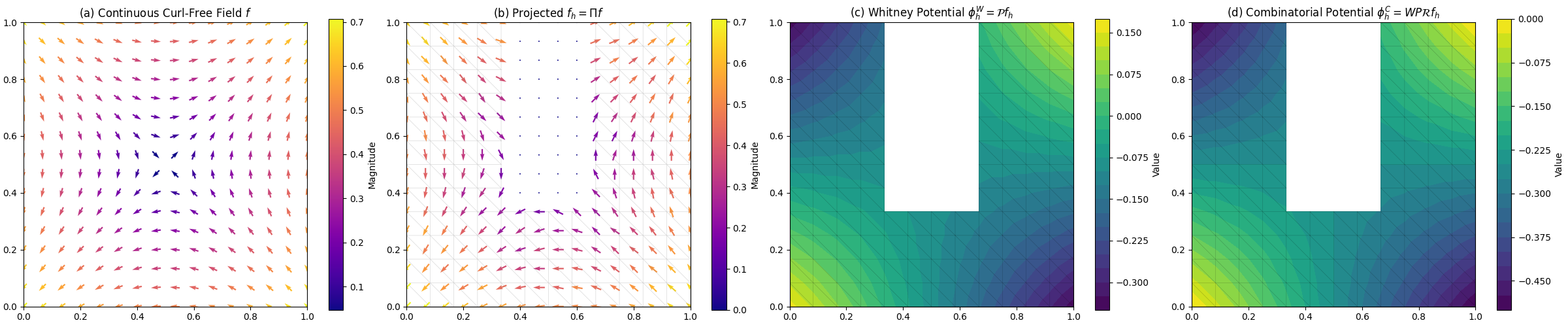}
    \caption{Discrete scalar potentials on a non-star-shaped domain. We compute a potential using the Whitney form based discrete Poincar\'e operator using the generalized cone operator of Definition \ref{def:Pgen} with point $a=(0.2,0.2)$ and the combinatorial discrete Poincar\'e operator using the discrete contraction based cone operator of Definition \ref{def:gen_simp_cone} built from a strong collapse sequence to the vertex at $(0.0,0.0)$.}
    \label{fig:potentials_2}
\end{figure}

\subsection{Discrete Bogovsk\u{\i} Operator}

We now again consider the simplicial complex triangulating the unit square depicted in Figure \ref{fig:squaremesh}. 

To compute the discrete Bogovski\u{\i}, we need to compute integrals over infinite cones. This is done as follows. For each vertex, $v$, in the simplicial complex, we associate a point $v^*$ in $\R^2$ which lies sufficiently far outside of the unit square on the ray starting at $a$ and pointing in the direction of $v$. Then, for the operator acting on Whitney $1$-forms, we use quadrature to integrate over the line segment between $a$ and $v^*$ as a proxy for the infinite cone. For the operator acting on Whitney $2$-forms, for each edge $e=[v_1, v_2]$ we integrate over the triangle $[a, v_1^*, v_2^*]$ as a proxy for the infinite cone. To perform this integration, we again use the clipping method noted for computing the integrals for the discrete Poincar\'e operator on star-shaped domains.

To test the discrete Bogovsk\u{\i} operators, we again generated 100 random $k$-cochains for each $k\in\{0,1,2\}$. We then set the cochain values associated to boundary simplices to zero in the $k=0$ and $k=1$ cases and in the $k=2$ case, we subtracted the global average of the corresponding Whitney $2$-form. We then verified the homotopy identities \eqref{eq:homotopy_formula} by checking the $\ell^\infty$ error as in previous tests. We see machine precision in all cases.

Now, we showcase the operator's abilities to compute discrete 2d vector potentials with boundary conditions. We consider $g(x,y) = x(1-x)y(1-y) - 1/36$ (which has average zero) and let $g_h = \Pi g$ where $\Pi$ is the canonical projection onto the piecewise constant space. We note that $g$ has average zero. We find discrete vector potential using the discrete Bogovski\u{\i} operator as ${\bf v}_h = \Bo g_h$. The output of this experiment is illustrated in Figure \ref{fig:potentials_3}.

\begin{figure}[htbp]
    \centering
    \includegraphics[width=0.7\linewidth]{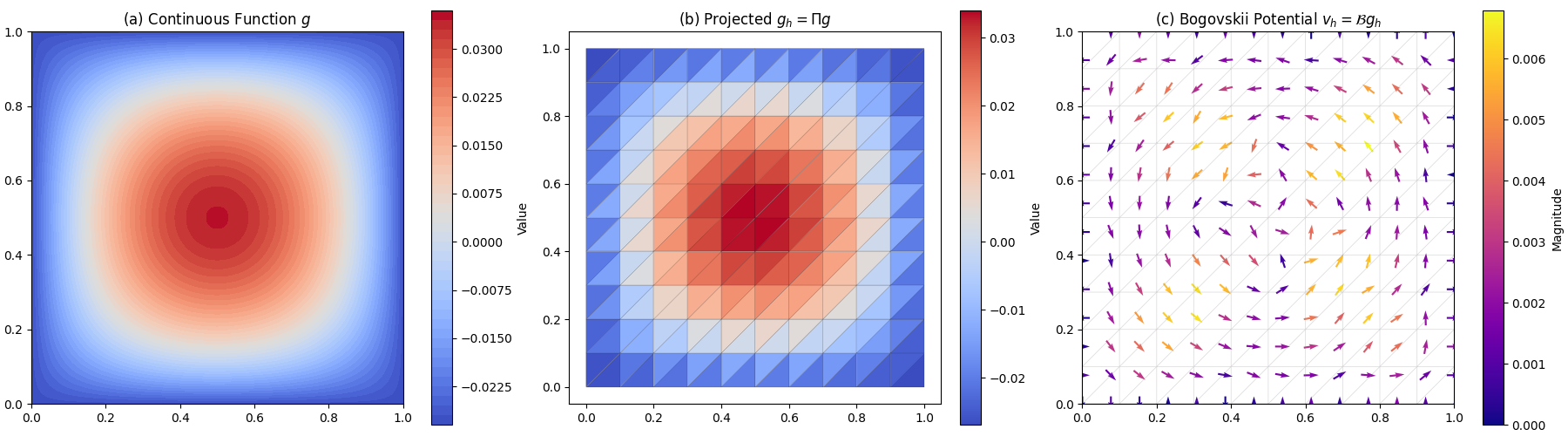}
    \caption{Discrete vector potentials on a star-shaped domain with boundary conditions. We compute a potential using the discrete Bogovski\u{\i} operator of Section \ref{sec:Bog} with point $a=(0.52,0.51)$ (this point does not lie on a mesh facet as required by Assumption \ref{assump:a}).}
    \label{fig:potentials_3}
\end{figure}

\bibliographystyle{abbrv}
\bibliography{references}

@book{AbMaRa1988,
	author = {Abraham, R. and Marsden, J. E. and Ratiu, T.},
	edition = {Second},
	publisher = {Springer-Verlag, New York},
	title = {Manifolds, tensor analysis, and applications},
	year = {1988}
}

@article{Dodziuk1976,
	author = {Dodziuk, Jozef},
	fjournal = {American Journal of Mathematics},
	journal = {Amer. J. Math.},
	number = {1},
	pages = {79--104},
	title = {Finite-difference approach to the {H}odge theory of harmonic forms},
	volume = {98},
	year = {1976}
}

@book {DeRaSa2010,
    AUTHOR = {De Loera, Jes\'us A. and Rambau, J\"org and Santos, Francisco},
     TITLE = {Triangulations: {S}tructures for algorithms and applications},
 PUBLISHER = {Springer-Verlag, Berlin},
      YEAR = {2010},
       DOI = {10.1007/978-3-642-12971-1}}

@article{HeHi2011,
	author = {Holger Heumann and Ralf Hiptmair},
	journal = {Discrete and Continuous Dynamical Systems},
	number = {4},
	pages = {1471--1495},
	title = {{E}ulerian and semi-{L}agrangian methods for convection-diffusion for differential forms},
	volume = {29},
	year = {2011},
	doi = {10.3934/dcds.2011.29.1471}
}

@article{ArnoldFalkWinther2006,
  title={Finite element exterior calculus, homological techniques, and applications},
  author={Arnold, Douglas N and Falk, Richard S and Winther, Ragnar},
  journal={Acta Numerica},
  volume={15},
  pages={1--155},
  year={2006},
  publisher={Cambridge University Press}
}

@book {Whitney1957,
    AUTHOR = {Whitney, Hassler},
     TITLE = {Geometric integration theory},
 PUBLISHER = {Princeton University Press, Princeton, NJ},
      YEAR = {1957},
     PAGES = {xv+387},
   MRCLASS = {53.0X},
  MRNUMBER = {87148},
MRREVIEWER = {H.\ Samelson},
}

@article{Nedelec1986,
	Author = {N{\'e}d{\'e}lec, J.-C.},
	Journal = {Numer. Math.},
	Number = {1},
	Pages = {57--81},
	Title = {A new family of mixed finite elements in {${\bf R}\sp 3$}},
	Volume = {50},
	Year = {1986}
}

@article{Desbrun2005,
title = {Discrete Poincaré lemma},
journal = {Applied Numerical Mathematics},
volume = {53},
number = {2},
pages = {231-248},
year = {2005},
note = {Tenth Seminar on Numerical Solution of Differential and Differntial-Algebraic Euqations (NUMDIFF-10)},
issn = {0168-9274},
doi = {https://doi.org/10.1016/j.apnum.2004.09.035},
url = {https://www.sciencedirect.com/science/article/pii/S0168927404002119},
author = {Mathieu Desbrun and Melvin Leok and Jerrold E. Marsden}
}

@book {Munkres1984,
    AUTHOR = {Munkres, James R.},
     TITLE = {Elements of algebraic topology},
 PUBLISHER = {Addison-Wesley Publishing Company, Menlo Park, CA},
      YEAR = {1984},
     PAGES = {ix+454},
      ISBN = {0-201-04586-9},
   MRCLASS = {55-01},
  MRNUMBER = {755006},
MRREVIEWER = {Christopher\ W.\ Stark},
}

@article {Costabel2010,
    AUTHOR = {Costabel, Martin and McIntosh, Alan},
     TITLE = {On {B}ogovski\u i\ and regularized {P}oincar\'e{} integral
              operators for de {R}ham complexes on {L}ipschitz domains},
   JOURNAL = {Math. Z.},
  FJOURNAL = {Mathematische Zeitschrift},
    VOLUME = {265},
      YEAR = {2010},
    NUMBER = {2},
     PAGES = {297--320},
      ISSN = {0025-5874,1432-1823},
   MRCLASS = {58J10 (35B65 35S05 47G30)},
  MRNUMBER = {2609313},
MRREVIEWER = {Horst\ Heck},
       DOI = {10.1007/s00209-009-0517-8},
       URL = {https://doi.org/10.1007/s00209-009-0517-8},
}

@book {Lee2013,
    AUTHOR = {Lee, John M.},
     TITLE = {Introduction to smooth manifolds},
    SERIES = {Graduate Texts in Mathematics},
    VOLUME = {218},
   EDITION = {Second},
 PUBLISHER = {Springer, New York},
      YEAR = {2013},
     PAGES = {xvi+708},
      ISBN = {978-1-4419-9981-8},
   MRCLASS = {58-01 (53-01 57-01)},
  MRNUMBER = {2954043},
}

@article {ChristiansenHu2018,
    AUTHOR = {Christiansen, Snorre H. and Hu, Kaibo},
     TITLE = {Generalized finite element systems for smooth differential
              forms and {S}tokes' problem},
   JOURNAL = {Numer. Math.},
  FJOURNAL = {Numerische Mathematik},
    VOLUME = {140},
      YEAR = {2018},
    NUMBER = {2},
     PAGES = {327--371},
      ISSN = {0029-599X,0945-3245},
   MRCLASS = {65N30 (58A12 76D07)},
  MRNUMBER = {3851060},
MRREVIEWER = {J\'anos\ Kar\'atson},
       DOI = {10.1007/s00211-018-0970-6},
       URL = {https://doi.org/10.1007/s00211-018-0970-6},
}

@article {Hiptmair1999,
    AUTHOR = {Hiptmair, R.},
     TITLE = {Canonical construction of finite elements},
   JOURNAL = {Math. Comp.},
  FJOURNAL = {Mathematics of Computation},
    VOLUME = {68},
      YEAR = {1999},
    NUMBER = {228},
     PAGES = {1325--1346},
      ISSN = {0025-5718,1088-6842},
   MRCLASS = {65N30 (41A30)},
  MRNUMBER = {1665954},
MRREVIEWER = {Stefan\ A.\ Funken},
       DOI = {10.1090/S0025-5718-99-01166-7},
       URL = {https://doi.org/10.1090/S0025-5718-99-01166-7},
}

@article{Bossavit2003,
	author = {Alain Bossavit},
	journal = {COMPEL : The International Journal for Computation and Mathematics in Electrical and Electronic Engineering},
	number = {3},
	pages = {470--480},
	title = {Extrusion, contraction : their discretization via Whitney forms},
	volume = {22},
	year = {2003}}

@phdthesis{Hirani2003,
	author = {Anil N. Hirani},
	month = {5},
	school = {California Institute of Technology},
	title = {Discrete Exterior Calculus},
	year = {2003}}

@article{DoMoSh2008,
  author    = {V. V. Dolotin and A. Yu. Morozov and Sh. R. Shakirov},
  title     = {An ${A}_\infty$ Structure on Simplicial Complexes},
  journal   = {Theoretical and Mathematical Physics},
  volume    = {156},
  number    = {1},
  pages     = {965--995},
  year      = {2008},
  doi       = {10.1007/s11232-008-0074-8},
  note      = {Originally published in \emph{Teoreticheskaya i Matematicheskaya Fizika}, 156(1):110–145 (2008)}
}

@phdthesis{Liu2026,
    author = {Bingyan Liu},
    title = {Discrete {Poincar\'e} Operators and {A}-Infinity Structures in Discrete Exterior Calculus},
    school = {University of Illinois at Urbana--Champaign},
    year = {2026}
}

@article {Demkowicz2005,
    AUTHOR = {Demkowicz, L. and Buffa, A.},
     TITLE = {{$H^1$}, {$H({\rm curl})$} and {$H({\rm div})$}-conforming
              projection-based interpolation in three dimensions.
              {Q}uasi-optimal {$p$}-interpolation estimates},
   JOURNAL = {Comput. Methods Appl. Mech. Engrg.},
  FJOURNAL = {Computer Methods in Applied Mechanics and Engineering},
    VOLUME = {194},
      YEAR = {2005},
    NUMBER = {2-5},
     PAGES = {267--296},
      ISSN = {0045-7825,1879-2138},
   MRCLASS = {65N30 (78M10)},
  MRNUMBER = {2105164},
MRREVIEWER = {JiChun\ Li},
       DOI = {10.1016/j.cma.2004.07.007},
       URL = {https://doi-org.revproxy.brown.edu/10.1016/j.cma.2004.07.007},
}

@article {Demkowicz2003,
    AUTHOR = {Demkowicz, L. and Babu{\v s}ka, I.},
     TITLE = {{$p$} interpolation error estimates for edge finite elements
              of variable order in two dimensions},
   JOURNAL = {SIAM J. Numer. Anal.},
  FJOURNAL = {SIAM Journal on Numerical Analysis},
    VOLUME = {41},
      YEAR = {2003},
    NUMBER = {4},
     PAGES = {1195--1208},
      ISSN = {0036-1429,1095-7170},
   MRCLASS = {65N30 (35Q60 65N15 78A25 78M10)},
  MRNUMBER = {2034876},
MRREVIEWER = {Daniele\ Boffi},
       DOI = {10.1137/S0036142901387932},
       URL = {https://doi-org.revproxy.brown.edu/10.1137/S0036142901387932},
}

@article {CapHu2023,
    AUTHOR = {\v{C}ap, Andreas and Hu, Kaibo},
     TITLE = {Bounded {P}oincar\'e{} operators for twisted and {BGG}
              complexes},
   JOURNAL = {J. Math. Pures Appl. (9)},
  FJOURNAL = {Journal de Math\'ematiques Pures et Appliqu\'ees. Neuvi\`eme
              S\'erie},
    VOLUME = {179},
      YEAR = {2023},
     PAGES = {253--276},
      ISSN = {0021-7824,1776-3371},
   MRCLASS = {58J10 (58A10 65N30)},
  MRNUMBER = {4659286},
MRREVIEWER = {Giovanni\ Moreno},
       DOI = {10.1016/j.matpur.2023.09.008},
       URL = {https://doi-org.revproxy.brown.edu/10.1016/j.matpur.2023.09.008},
}

@article {Bogovskii1979,
    AUTHOR = {Bogovski{\u i}, M. E.},
     TITLE = {Solution of the first boundary value problem for an equation
              of continuity of an incompressible medium},
   JOURNAL = {Dokl. Akad. Nauk SSSR},
  FJOURNAL = {Doklady Akademii Nauk SSSR},
    VOLUME = {248},
      YEAR = {1979},
    NUMBER = {5},
     PAGES = {1037--1040},
      ISSN = {0002-3264},
   MRCLASS = {35Q20 (76N99)},
  MRNUMBER = {553920},
MRREVIEWER = {N.\ D.\ Kopachevski\u i},
}

@book {Hatcher2002,
    AUTHOR = {Hatcher, Allen},
     TITLE = {Algebraic topology},
 PUBLISHER = {Cambridge University Press, Cambridge},
      YEAR = {2002},
     PAGES = {xii+544},
      ISBN = {0-521-79160-X; 0-521-79540-0},
   MRCLASS = {55-01 (55-00)},
  MRNUMBER = {1867354},
MRREVIEWER = {Donald\ W.\ Kahn},
}

@book {Leok2004,
    AUTHOR = {Leok, Melvin},
     TITLE = {Foundations of computational geometric mechanics},
      NOTE = {Thesis (Ph.D.)--California Institute of Technology},
 PUBLISHER = {ProQuest LLC, Ann Arbor, MI},
      YEAR = {2004},
     PAGES = {261},
      ISBN = {978-0496-07066-4},
   MRCLASS = {99-05},
  MRNUMBER = {2706498},
       URL =
              {http://gateway.proquest.com.revproxy.brown.edu/openurl?url_ver=Z39.88-2004&rft_val_fmt=info:ofi/fmt:kev:mtx:dissertation&res_dat=xri:pqdiss&rft_dat=xri:pqdiss:3147961},
}

@article {Matousek08,
    AUTHOR = {Matou{\v s}ek, Ji{\v r}{\'i}},
     TITLE = {L{C} reductions yield isomorphic simplicial complexes},
   JOURNAL = {Contrib. Discrete Math.},
  FJOURNAL = {Contributions to Discrete Mathematics},
    VOLUME = {3},
      YEAR = {2008},
    NUMBER = {2},
     PAGES = {37--39},
      ISSN = {1715-0868},
   MRCLASS = {55U10 (68R99)},
  MRNUMBER = {2455228},
MRREVIEWER = {Jakob\ Jonsson},
}

@article {Barmak2012,
    AUTHOR = {Barmak, Jonathan Ariel and Minian, Elias Gabriel},
     TITLE = {Strong homotopy types, nerves and collapses},
   JOURNAL = {Discrete Comput. Geom.},
  FJOURNAL = {Discrete \& Computational Geometry. An International Journal
              of Mathematics and Computer Science},
    VOLUME = {47},
      YEAR = {2012},
    NUMBER = {2},
     PAGES = {301--328},
      ISSN = {0179-5376,1432-0444},
   MRCLASS = {05E45 (57Q10)},
  MRNUMBER = {2872540},
MRREVIEWER = {John\ F.\ Oprea},
       DOI = {10.1007/s00454-011-9357-5},
       URL = {https://doi-org.revproxy.brown.edu/10.1007/s00454-011-9357-5},
}

@article {Stasheff1963,
    AUTHOR = {Stasheff, James Dillon},
     TITLE = {Homotopy associativity of {$H$}-spaces. {I}, {II}},
      NOTE = {{\bf 108} (1963), 275-292; ibid},
   JOURNAL = {Trans. Amer. Math. Soc.},
  FJOURNAL = {Transactions of the American Mathematical Society},
    VOLUME = {108},
      YEAR = {1963},
     PAGES = {293--312},
      ISSN = {0002-9947,1088-6850},
   MRCLASS = {55.40},
  MRNUMBER = {158400},
MRREVIEWER = {A.\ H.\ Clark},
       DOI = {10.1090/s0002-9947-1963-0158400-5},
       URL = {https://doi-org.revproxy.brown.edu/10.1090/s0002-9947-1963-0158400-5},
}

@inproceedings {Whitney1952,
    AUTHOR = {Whitney, Hassler},
     TITLE = {{$r$}-dimensional integration in {$n$}-space},
 BOOKTITLE = {Proceedings of the {I}nternational {C}ongress of
              {M}athematicians, {C}ambridge, {M}ass., 1950, vol. 1},
     PAGES = {245--256},
 PUBLISHER = {Amer. Math. Soc., Providence, RI},
      YEAR = {1952},
   MRCLASS = {27.2X},
  MRNUMBER = {43879},
MRREVIEWER = {T.\ Rad\'o},
}

@article{SutherlandHodgman74,
    author = {Sutherland, Ivan E. and Hodgman, Gary W.},
    title = {Reentrant Polygon Clipping},
    year = {1974},
    issue_date = {Jan. 1974},
    publisher = {Association for Computing Machinery},
    address = {New York, NY, USA},
    volume = {17},
    number = {1},
    issn = {0001-0782},
    journal = {Commun. ACM},
    pages = {32–42},
    numpages = {11},
}

@article {Farrell2011,
    AUTHOR = {Farrell, P. E. and Maddison, J. R.},
     TITLE = {Conservative interpolation between volume meshes by local
              {G}alerkin projection},
   JOURNAL = {Comput. Methods Appl. Mech. Engrg.},
  FJOURNAL = {Computer Methods in Applied Mechanics and Engineering},
    VOLUME = {200},
      YEAR = {2011},
    NUMBER = {1-4},
     PAGES = {89--100},
      ISSN = {0045-7825,1879-2138},
   MRCLASS = {65D05 (65D99 76M12)},
  MRNUMBER = {2740809},
       DOI = {10.1016/j.cma.2010.07.015},
       URL = {https://doi-org.revproxy.brown.edu/10.1016/j.cma.2010.07.015},
}

@article {Pitassi2022,
    AUTHOR = {Pitassi, Silvano and Ghiloni, Riccardo and Specogna, Ruben},
     TITLE = {Inverting the discrete curl operator: a novel graph algorithm
              to find a vector potential of a given vector field},
   JOURNAL = {J. Comput. Phys.},
  FJOURNAL = {Journal of Computational Physics},
    VOLUME = {466},
      YEAR = {2022},
     PAGES = {Paper No. 111404, 18},
      ISSN = {0021-9991,1090-2716},
   MRCLASS = {65D15 (65M06 65M60 65N06 65N30)},
  MRNUMBER = {4449581},
       DOI = {10.1016/j.jcp.2022.111404},
       URL = {https://doi-org.revproxy.brown.edu/10.1016/j.jcp.2022.111404},
}

@article {Whitehead1950,
    AUTHOR = {Whitehead, J. H. C.},
     TITLE = {Simple homotopy types},
   JOURNAL = {Amer. J. Math.},
  FJOURNAL = {American Journal of Mathematics},
    VOLUME = {72},
      YEAR = {1950},
     PAGES = {1--57},
      ISSN = {0002-9327,1080-6377},
   MRCLASS = {56.0X},
  MRNUMBER = {35437},
MRREVIEWER = {J.\ Dugundji},
       DOI = {10.2307/2372133},
       URL = {https://doi-org.revproxy.brown.edu/10.2307/2372133},
}

@article {LeMenachClenetPiriou1998,
  AUTHOR={Le Menach, Y. and Clenet, S. and Piriou, F.},
  JOURNAL={IEEE Transactions on Magnetics}, 
  TITLE={Determination and utilization of the source field in 3D magnetostatic problems}, 
  YEAR={1998},
  VOLUME={34},
  NUMBER={5},
  PAGES={2509-2512},
  DOI={10.1109/20.717578}
  }

@article {RodriguezValli2015,
    AUTHOR = {Alonso Rodr\'iguez, Ana and Valli, Alberto},
     TITLE = {Finite element potentials},
   JOURNAL = {Appl. Numer. Math.},
  FJOURNAL = {Applied Numerical Mathematics. An IMACS Journal},
    VOLUME = {95},
      YEAR = {2015},
     PAGES = {2--14},
      ISSN = {0168-9274,1873-5460},
   MRCLASS = {65N30},
  MRNUMBER = {3349682},
MRREVIEWER = {Annalisa\ Quaini},
       DOI = {10.1016/j.apnum.2014.05.014},
       URL = {https://doi-org.revproxy.brown.edu/10.1016/j.apnum.2014.05.014},
}

@unpublished{dipietro2025,
      title={Conforming lifting and adjoint consistency for the Discrete de Rham complex of differential forms}, 
      author={Daniele A. Di Pietro and Jérôme Droniou and Silvano Pitassi},
      year={2025},
      eprint={2509.21449},
      archivePrefix={arXiv},
      primaryClass={math.NA},
      url={https://arxiv.org/abs/2509.21449},
      note={arXiv preprint.}
}

@article {Rodriguez_magnetostatics_2013,
    AUTHOR = {Rodr\'iguez, Ana Alonso and Bertolazzi, Enrico and Ghiloni,
              Riccardo and Valli, Alberto},
     TITLE = {Construction of a finite element basis of the first de {R}ham
              cohomology group and numerical solution of 3{D} magnetostatic
              problems},
   JOURNAL = {SIAM J. Numer. Anal.},
  FJOURNAL = {SIAM Journal on Numerical Analysis},
    VOLUME = {51},
      YEAR = {2013},
    NUMBER = {4},
     PAGES = {2380--2402},
      ISSN = {0036-1429,1095-7170},
   MRCLASS = {65N30 (35J46 35Q60 65N12)},
  MRNUMBER = {3090156},
MRREVIEWER = {Marius\ Ghergu},
       DOI = {10.1137/120890648},
       URL = {https://doi-org.revproxy.brown.edu/10.1137/120890648},
}

@article {Rodriguez_eddycurrent_2015,
    AUTHOR = {Alonso Rodr\'iguez, Ana and Bertolazzi, Enrico and Ghiloni,
              Riccardo and Valli, Alberto},
     TITLE = {Finite element simulation of eddy current problems using
              magnetic scalar potentials},
   JOURNAL = {J. Comput. Phys.},
  FJOURNAL = {Journal of Computational Physics},
    VOLUME = {294},
      YEAR = {2015},
     PAGES = {503--523},
      ISSN = {0021-9991,1090-2716},
   MRCLASS = {65N30 (78A25)},
  MRNUMBER = {3343738},
       DOI = {10.1016/j.jcp.2015.03.060},
       URL = {https://doi-org.revproxy.brown.edu/10.1016/j.jcp.2015.03.060},
}

@article {ErnVohralik20,
    AUTHOR = {Ern, Alexandre and Vohral\'ik, Martin},
     TITLE = {Stable broken {$H^1$} and {$H({\rm div})$} polynomial
              extensions for polynomial-degree-robust potential and flux
              reconstruction in three space dimensions},
   JOURNAL = {Math. Comp.},
  FJOURNAL = {Mathematics of Computation},
    VOLUME = {89},
      YEAR = {2020},
    NUMBER = {322},
     PAGES = {551--594},
      ISSN = {0025-5718,1088-6842},
   MRCLASS = {65N15 (65N30 76M10)},
  MRNUMBER = {4044442},
MRREVIEWER = {Riccardo\ Sacco},
       DOI = {10.1090/mcom/3482},
       URL = {https://doi-org.revproxy.brown.edu/10.1090/mcom/3482},
}

\appendix

\section{On the derivation of the collapse-based simplicial cone operator}\label{app:Whitehead}
We provide some details on how the simplicial cone operator in Section \ref{sec:collapse_cone} follows naturally from \cite[Theorem 3]{Whitehead1950}. For brevity, we only consider the case of $k$-chains with $k>0$. We begin with a collapsible simplicial complex $X$ and collapse sequence $X= X_m \searrow X_{m-1} \searrow \cdots \searrow X_0 = a$ for $a\in X^0$. As in Section \ref{sec:collapse_cone}, for each $j\in \{1,\dots,m\}$, we let $\sigma_j$ and $\tau_j$ be such that $X_{j-1}:= X_j\setminus\{\sigma_j, \tau_j\}$ where $\tau_j$ is the free face. 

Now, we look at a single collapse $X_{j}\searrow X_{j-1}$ and let $p:=\dim(\sig_j)$ so $\dim(\tau_j)=p-1$. The set of two elements $\{\sigma_j,\tau_j\}$ do not form a simplicial complex, but we can still consider the set of $k$-chains over this set, denoted $\C_k(X_j\setminus X_{j-1})$. In particular, 
\begin{equation}\label{directsum}
    \C_k(X_j) = \C_k(X_{j-1})\oplus\C_k(X_j\setminus X_{j-1}).
\end{equation}
We now apply \cite[Theorem 3]{Whitehead1950} to say there exist chain homotopies $\eta_k^j : \C_k(X_j) \rightarrow \C_{k+1}(X_j)$ and ``deformation retractions'' $\kappa_k^j:\C_k(X_j)\to \C_{k}(X_{j-1})$ such that for all $k>0$,
\begin{subequations}
    \begin{alignat}{3}
        &\eta_{k-1}^j (\p_k u) +  \p_k \eta_k^j (u) = u - \kappa_k^j (u), \quad &&\forall u \in \C_k(X_j),\label{homtop}\\
        &\partial_k \kappa_k^j (u) = \kappa_{k-1}^j(\partial_k u), \quad &&\forall u \in \C_k(X_j),\\
        &\eta_k^j (u) = 0, \quad \kappa_k^j (u) = u, \quad &&\forall u\in \C_{k}(X_{j-1}), \label{homtop1}\\
        &\eta_k^j (u) \in \C_{k+1}(X_j\setminus X_{j-1}), \quad &&\forall u\in \C_k(X_j\setminus X_{j-1}). \label{homtop2}
    \end{alignat}
\end{subequations}
Actually \eqref{homtop2} does not follow directly from the statement of \cite[Theorem 3]{Whitehead1950}, but on inspection of the the proof, one sees that the property \eqref{homtop2} indeed holds.

Directly from the above properties of $\eta_k^j$ and $\kappa_k^j$, we in fact have that
\begin{alignat}{1}\label{aux110}
\kappa_{p-1}^j(\tau_j)=\tau_j-\p_{p} \sigma_j, \quad \kappa_{p}^j(\sigma_j)= 0 , \quad \eta_{p-1}^j(\tau_j)=\sigma_j, \quad \eta_{p}^j(\sigma_j)= 0. 
\end{alignat}

To show this, we start with $\eta_p^j(\sigma_j)$. From \eqref{homtop2}, we have have $\eta_p^j(\sigma_j) \in  \C_{p+1}(X_j\setminus X_{j-1})$ but $\C_{p+1}(X_j\setminus X_{j-1})= \{0\}$ so $\eta_p^j(\sigma_j)=0$. Next, we turn to $\kappa_p^j(\sigma_j)$. Using that $\eta_p^j(\sigma_j)=0$ along with \eqref{homtop}, we have
\begin{equation}\label{aux111}
\sigma_j- \kappa_{p}^j(\sigma_j) = \eta_{p-1}^j(\partial_p \sigma_j) =   \eta_{p-1}^j (\partial_p \sigma_j-\tau_j)+\eta_{p-1}^j(\tau_j)=\eta_{p-1}^j (\tau_j),
\end{equation}
where we used that $\partial_p \sigma_j-\tau_j \in \C_{p-1}(X_{j-1})$ and hence $\eta_{p-1}^j (\partial_p \sigma_j-\tau_j)=0$ from \eqref{homtop1}. Then, using \eqref{aux111}, we have that $\kappa_p^j(\sigma_j) = \sigma_j-\eta_{p-1}^j (\tau_j)$ which is in $\C_p(X_j\setminus X_{j-1})$ due to \eqref{homtop2}. However, we know that $\kappa_p^j(\sigma_j) \in \C_p(X_{j-1})$ so by \eqref{directsum}, it must be that $\kappa_p^j(\sigma_j)=0$. Moreover, from \eqref{aux111}, this means that $\eta_{p-1}^j (\tau_j)=\sigma_j$. 
Lastly, we consider $\kappa_{p-1}^j(\tau_j)$. Since $\partial_{p-1}\tau_j \in \C_{p-2}(X_{j-1})$, we have $\eta_{p-2}^j(\partial_{p-1}\tau_j)=0$. Therefore, from \eqref{homtop}, we see that $\tau_j-\kappa_{p-1}^j(\tau_j)= \partial_{p} \eta_{p-1}^j(\tau_j)=  \partial_{p} \sigma_j$ so $\kappa_{p-1}^j(\tau_j)= \tau_j-\p_{p} \sigma_j$. 

Finally for all $k>0$, we set $H_k^0:=0$ and define the operators $H_k^j:\C_k(X_j)\to \C_{k+1}(X_j)$ for $j>0$ recursively by 
\begin{equation}
    H_k^j(u) := \eta_k^j(u) + H_{k}^{j-1}(\kappa_k^j(u)).
\end{equation}
Using \eqref{homtop1}, we immediately see that $H_k^j(u) = H_{k}^{j-1}(u)$ for all $u\in \C_k(X_{j-1})$ for all $k>0$, and using \eqref{aux110}, we immediately see that $H_p^j(\sigma_j)= 0$ and $H_{p-1}^j(\tau_j)= \sigma_j - H_{p-1}^{j-1}(\partial_p \sigma_j-\tau_j)$. Comparing this to \eqref{eq:collapse_cone_def}, we see that the operator $H_k^m$ is exactly the simplicial cone operator of Definition \ref{def:collapse_cone}.

\section{Proof of Lemma \ref{prop:ChainHomotopy}}\label{app:ChainHomotopy}

\begin{proof}[Proof of Lemma \ref{prop:ChainHomotopy}]
As $(\sigma \times I_n)_{k+1} = \sum_{r=0}^{n-1} (\sigma \times [\frac{r}{n},\frac{r+1}{n}])_{k+1}$, it suffices to show that the proposition for $n=1$.
By linearity of $E$, it suffices to show the chain homotopy identity hold for an ordered simplex $\sigma= [v_0,\dots,v_k]$. Let $\sigma^j = [v_0,\dots,\hat{v_j},\dots, v_k]$, $\tau_i = [(v_0,0),\dots,(v_i,0)(v_i,1),\dots,(v_k,1)]$, and $\tau_i^j$ be the simplex obtained from $\tau_i$ by removing the $j$-th vertex, i.e.
\[
\tau_{i}^j =
\begin{cases}
[(v_0,0),\dots,(v_{j-1},0)(v_{j+1},0)\dots(v_i,0)(v_i,1),\dots,(v_k,1)], & j < i, \\
[(v_0,0),\dots,(v_{i-1},0)(v_i,1),\dots,(v_k,1)], & j = i, \\
[(v_0,0),\dots,(v_{i},0)(v_{i+1},1),\dots,(v_k,1)], & j = i+1, \\
[(v_0,0),\dots,(v_{i},0)(v_{i},1)\dots(v_{j-1},1)(v_{j+1},1),\dots,(v_k,1)],  & j > i+1.
\end{cases}
\]
Then we have
\begin{align*}
\partial E(\sigma) 
& = \partial \left(\sum_{i=0}^k (-1)^i \tau_i\right) 
= \sum_{i=0}^k (-1)^i \partial \tau_i  
= \sum_{i=0}^k (-1)^i \sum_{j=0}^{k+1}(-1)^j\tau_{i}^j
= \sum_{i=0}^k\sum_{j=0}^{k+1} (-1)^{i+j}\tau_i^j.
\end{align*}
A direct computation shows that 
\begin{align*}
E(\sigma^j) = \sum_{i=0}^{j-1}(-1)^i \tau_i^{j+1} + \sum_{i=j+1}^k (-1)^{i-1}\tau_i^j,
\end{align*}
thus we have  
\begin{align*}
E(\partial \sigma) 
& = E \left(\sum_{j=0}^k (-1)^j \sigma^j\right) 
= \sum_{j=0}^k (-1)^j E(\partial \sigma^j) 
= \sum_{j=0}^k (-1)^j \left(\sum_{i<j}(-1)^i \tau_i^{j+1} + \sum_{i>j}(-1)^{i-1} \tau_i^j\right) \\
& =\sum_{j=1}^{k+1}\sum_{i<j-1} (-1)^{i+j-1} \tau_i^j + \sum_{j=0}^k\sum_{i>j}(-1)^{i+j-1} \tau_i^j = \sum_{j=0}^{k+1} \sum_{i \neq j-1, i \neq j}(-1)^{i+j-1} \tau_i^j.
\end{align*}

Hence,
\begin{align*}
(\partial E + E \partial)(\sigma) 
& = \sum_{j=0}^{k+1}\sum_{i=\max(j-1,0)}^{\min(j,k)}  (-1)^{i+j}\tau_i^j 
= \tau_0^0 - \tau_{k}^{k+1} + \sum_{j=1}^k \left(\tau_{j}^{j} - \tau_{j-1}^j\right).
\end{align*}
Notice that $\tau_{j}^{j} = \tau_{j-1}^j$, $(j_1)_\#(\sigma) = \tau_0^0$ and $(j_0)_\#(\sigma) =\tau_{k}^{k+1}.$ This concludes the proof. 
\end{proof}

\section{Proof of Lemma \ref{lem:k=nkey}}\label{app:k=nlem}
First, we remark about the orientation of cones. For the linear singular $n$-cone $L^\vee$ corresponding to points $y_0, \dots, y_n$ in $\R^n$, the orientation of $L^\vee$ is $\mathcal{O}(L^\vee) := \Sign(\det([y_1-y_0, \dots, y_n-y_0]))$.

We first provide a general lemma related to the orientation of $\Co^\vee(f_i)$ for any face $f_i$ of $\tau\in X^n$. Then, we give two key lemmas which will give us everything we need to prove Lemma \ref{lem:k=nkey}.
\begin{lemma}\label{lem:orientationrelation}
    Let $\tau=[x_0, \dots, x_n]\in X^n$ and $f_i = [x_0, \dots, \widehat{x_i}, \dots, x_n]$. Let $a\in\R^n$ be arbitrary, $v_j := x_j - a$ for all $j\in \{0, \dots, n\}$, and let $B_i$ be the matrix $[v_0, \dots, \widehat{v_i} ,\dots, v_n]$. 
    
    Recall that we can express $a$ in terms of its barycentric coordinates as $a = \sum_{k=0}^n \lambda_k x_k$ where $\sum_{k=0}^n \lambda_k = 1$. For any $i,j\in\{0, \dots, n\}$ such that $\lambda_i, \lambda_j \neq 0$,
    \begin{equation}\label{eq:orientationrelation}
        \det(B_j) = \frac{\lambda_j}{\lambda_i} (-1)^{j-i} \det(B_i).
    \end{equation}
\end{lemma}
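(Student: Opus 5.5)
We need $\det(B_j) = \frac{\lambda_j}{\lambda_i}(-1)^{j-i}\det(B_i)$. The plan is to use the barycentric relation to eliminate one column and compare determinants by multilinearity.

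Since $\sum_k \lambda_k = 1$ and $a = \sum_k \lambda_k x_k$, we obtain the linear dependence
\begin{equation*}
\sum_{k=0}^n \lambda_k v_k = \sum_{k=0}^n \lambda_k (x_k - a) = a - a = 0 .
\end{equation*}
Hence, when $\lambda_i \neq 0$, we can write $v_i = -\frac{1}{\lambda_i}\sum_{k\neq i}\lambda_k v_k$.

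Assume without loss of generality that $i<j$; the case $i>j$ follows by swapping roles, since the formula is symmetric under $i\leftrightarrow j$, and the case $i=j$ is trivial. The matrix $B_j$ contains the column $v_i$ in position $i$ (columns are $v_0,\dots,\widehat{v_j},\dots,v_n$). Substitute the expression for $v_i$ into that column and expand by multilinearity. Every term $\lambda_k v_k$ with $k\neq i,j$ duplicates an existing column of $B_j$ and contributes zero. Only the $k=j$ term survives, giving
\begin{equation*}
\det(B_j) = -\frac{\lambda_j}{\lambda_i}\det[v_0,\dots,v_{i-1},v_j,v_{i+1},\dots,\widehat{v_j},\dots,v_n].
\end{equation*}

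It remains to move $v_j$ from position $i$ to its natural slot. In the resulting matrix the columns are $v_0,\dots,\widehat{v_i},\dots,v_n$, i.e. $B_i$, except that $v_j$ sits where $v_i$ was. The indices $v_{i+1},\dots,v_{j-1}$ lie between, so $v_j$ must pass over $j-i-1$ columns. This contributes a factor $(-1)^{j-i-1}$. Combined with the leading minus sign, the total factor is $(-1)^{j-i}\lambda_j/\lambda_i$, which is the claimed formula.

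The only step requiring care is the sign bookkeeping of the column permutation. I would double-check it on $n=1$, where $B_0=[v_1]$ and $B_1=[v_0]$, with $\lambda_0 v_0+\lambda_1 v_1=0$ giving $v_0=-\frac{\lambda_1}{\lambda_0}v_1$, consistent with $(-1)^{1}$. Note that only $\lambda_i\neq0$ is actually needed for the derivation; the hypothesis $\lambda_j\neq 0$ is harmless.
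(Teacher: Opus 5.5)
Your proposal is correct and is essentially the paper's own argument: from $\sum_k \lambda_k v_k = 0$ you substitute for the column $v_i$ of $B_j$. Multilinearity then leaves only the $k=j$ term, since every other term repeats a column, and moving $v_j$ past the $j-i-1$ intervening columns gives $(-1)^{j-i-1}$, which with the leading minus sign yields $(-1)^{j-i}\lambda_j/\lambda_i$. Your sign bookkeeping checks out, and you state the key relation correctly as $\sum_k \lambda_k v_k = 0$, whereas the paper's text has the typo $\sum_k \lambda_k x_k = 0$.
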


\begin{proof}
    First, notice that $a = \sum_{k=0}^n \lambda_k x_k = \sum_{k=0}^n \lambda_k (v_k + a) = \sum_{k=0}^n \lambda_k v_k + a$ so $\sum_{k=0}^n \lambda_k x_k = 0$ meaning that $\lambda_i v_i = - \sum_{k\neq i} \lambda_k v_k$.
    Therefore, supposing without loss of generality that $i < j$,
    \begin{align*}
        \det(B_j) & = -\frac{1}{\lambda_i} \sum_{k\neq i} \lambda_k \det \left[v_0, \dots, v_k, \dots, \widehat{v_{j}}, \dots, v_n\right]\\
        &= -\frac{\lambda_j}{\lambda_i} \det \left[v_0, \dots, v_j, \dots, \widehat{v_{j}}, \dots, v_n\right] = \frac{\lambda_j}{\lambda_i} (-1)^{j-i} \det(B_i).
    \end{align*}
\end{proof}

\begin{lemma}\label{lem:ain}
    Let $\tau=[x_0, \dots, x_n]\in X^n$ and $f_i = [x_0, \dots, \widehat{x_i}, \dots, x_n]$. Suppose $a\in \Int(\tau)$ where $\Int(\cdot)$ denotes the interior of $\cdot$. Then, the following statements hold.
    \begin{enumerate}[label=(\alph*)]
        \item $\bigcup_{i=0}^n\image(\Co^\vee(f_i)) = \R^n.$
        
        \item  $\image(\Co^\vee(f_i))\cap \image(\Co^\vee(f_j))$ has zero $n$-dimensional measure for all $i\neq j$. Moreover, if a lies in the same plane as $f_i$, $\image(\Co^\vee(f_i))$ also has zero $n$-dimensional measure.

    \item $(-1)^i\mathcal{O}(\Co^\vee( f_i)) = (-1)^j\mathcal{O}(\Co^\vee(f_j))$ for all $i,j\in\{0, \dots, n\}$ such that $a$ does not lie in the plane of $f_i$ or $f_j$.
    \end{enumerate}
\end{lemma}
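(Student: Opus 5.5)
We will work directly with the explicit description of the infinite cones. With $v_k = x_k - a$, the image of $\Co^\vee(f_i) = L^\vee[a, x_0,\dots,\widehat{x_i},\dots,x_n]$ is
\[
C_i := \Big\{ a + \sum_{k\neq i} c_k v_k : c_k \ge 0 \Big\}.
\]
Since $a\in\Int(\tau)$, all barycentric coordinates $\lambda_k$ of $a$ are strictly positive. Consequently $a$ never lies in the plane of any $f_i$: that would force $\lambda_i = 0$. The ``moreover'' clause of (b) is therefore vacuous, and every $B_i$ is nonsingular. The key identity, already used in the proof of Lemma~\ref{lem:orientationrelation}, is $\sum_{k} \lambda_k v_k = 0$ with every $\lambda_k>0$.

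For (a), take any $y\in\R^n$. The vectors $\{v_k\}_{k\ne 0}$ form a basis, so we can write $y - a = \sum_k c_k v_k$ with arbitrary real $c_k$ (including a $c_0$, set initially to $0$). Using the relation $\sum_k \lambda_k v_k = 0$, we replace $c_k$ by $c_k - s\lambda_k$. Choose $s = \min_k c_k/\lambda_k$ and let $i$ be an index attaining this minimum. Then all new coefficients are $\ge 0$ and the $i$-th coefficient vanishes, so $y\in C_i$.

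For (b), suppose $y \in \Int(C_i)\cap \Int(C_j)$ with $i\ne j$. Then $y-a$ has two representations: one with positive coefficients on all $v_k$, $k\ne i$, and zero at $i$; the other with positive coefficients for $k\neq j$ and zero at $j$. The kernel of the map $(c_k)\mapsto \sum c_k v_k$ is one-dimensional and spanned by $(\lambda_k)$, so the two coefficient vectors differ by $s\lambda$ for some $s$. Comparing the $i$-th entries gives $0 = c'_i + s\lambda_i$ with $c'_i>0$, hence $s<0$. Comparing the $j$-th entries gives $c_j + s\lambda_j = 0$ with $c_j>0$, hence $s>0$. This is a contradiction. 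So the intersection lies within the boundaries of the cones, which are finite unions of hyperplane pieces and thus have measure zero.

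For (c), $\mathcal{O}(\Co^\vee(f_i)) = \Sign\det[x_0-a,\dots,\widehat{x_i - a},\dots,x_n-a] = \Sign\det B_i$. By Lemma~\ref{lem:orientationrelation}, $\det B_j = (\lambda_j/\lambda_i)(-1)^{j-i}\det B_i$, and since $\lambda_j/\lambda_i>0$ we get $\Sign\det B_j = (-1)^{j-i}\Sign\det B_i$. Multiplying both sides by $(-1)^j$ gives the claim. I expect the only delicate point to be (b): the uniqueness argument via the one-dimensional kernel must be set up carefully, and the passage from disjoint interiors to a measure-zero intersection must be justified. Both are routine once the kernel is identified with $\mathrm{span}(\lambda)$.
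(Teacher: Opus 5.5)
Your proof is correct. Part (c) is the paper's argument: all $\lambda_k>0$, plugged into Lemma~\ref{lem:orientationrelation}.

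For (a) and (b) you take a different route. The paper argues geometrically with rays. For $p\in\R^n$, the ray from $a$ through $p$ leaves the bounded convex set $\tau$ through exactly one boundary point. That point lies in some face $f_i$, so $p\in C_i$. If $p\in C_i\cap C_j$, the exit point must lie in $f_i\cap f_j$, so $p$ lies in the infinite cone over that codimension-two face, which is $n$-null.

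You instead work only with the relation $\sum_k\lambda_k v_k=0$, $\lambda_k>0$. The min-ratio shift $c\mapsto c-s\lambda$ gives (a). The fact that the kernel of $(c_k)\mapsto\sum_k c_k v_k$ is $\mathrm{span}(\lambda)$ shows the sets $\Int(C_i)$ are pairwise disjoint. Measure zero then follows from $C_i\cap C_j\subset\partial C_i\cup\partial C_j$.

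What each route buys:
\begin{itemize}
\item Yours avoids the facts about where rays exit $\tau$, which the paper takes for granted.
\item Yours uses the same pivoting mechanism the paper itself uses in Lemma~\ref{lem:aout}(a), so the interior and exterior cases are handled uniformly.
\item The paper's version is more visual and locates the overlap more precisely, as the cone over $f_i\cap f_j$.
\end{itemize}
You are also right that the ``moreover'' clause of (b) is vacuous when $a\in\Int(\tau)$.

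There is one sign slip to fix in (b). In comparing the $i$-th entries you implicitly use $c=c'+s\lambda$, with $c_i=0$ and $c'_j=0$. Then the $j$-th comparison should read $c_j=s\lambda_j$, which gives $s>0$. The equation $c_j+s\lambda_j=0$ that you wrote would give $s<0$, and then there is no contradiction. With a consistent convention the argument goes through.
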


\begin{proof}
    \emph{Proof of (a): }For any $p\in\R^n$, we denote the ray with initial point $a$ which passes through $p$ as $r(p)$. Explicitly, $r(p)= \{a+t(p-a):t\geq0\}$. Since $\tau$ is closed and bounded $r(p)$ intersects at least one face $f_i$ of $\tau$. Hence, every point $p\in\R^n$ must lie in at least one of the $\image(\Co^\vee(f_i))$.
    
    \emph{Proof of (b): }
    Since $\tau$ is convex, for any $p\in\R^n$, $r(p)$ has exactly one intersection with the boundary of $\tau$. This intersection will either lie in the interior of a single face $f_i$ or it will lie in $f_i \cap f_j = g_{ij}:=[x_0, \dots, \widehat{x_i}, \dots, \widehat{x_j}, \dots, x_n]$. Hence, if $p\in \image(\Co^\vee(f_i))\cap \image(\Co^\vee(f_j))$, then $p\in \image(\Co^\vee (g_{ij}))$ which is either an infinite $(n-1)$-cone or $(n-2)$-cone, and hence has $n$-measure zero. 

    On the other hand, if a lies in the same plane as $f_i$, $\image(\Co^\vee(f_i))$ is a degenerate $n$-simplex and so has zero $n$-dimensional measure.

    \emph{Proof of (c): }

    Now, since $a$ is in the interior of the $n$-simplex $\tau$, its barycentric coordinates $\lambda_0, \dots, \lambda_n$ are all positive. That is, $\sum_{j=0}^n \lambda_j = 1$ and $a = \sum_{\ell=0}^n \lambda_\ell x_\ell$ with $\lambda_i > 0$ for all $i\in\{0, \dots, n\}$. Therefore, from \eqref{eq:orientationrelation}, we have that for any $i, j\in\{0, \dots, n\}$ such that $\lambda_i, \lambda_j \neq 0$,
    \begin{equation*}
        \mathcal{O}(\Co^\vee(f_j)) = (-1)^{j-i} \mathcal{O}(\Co^\vee(f_j)).
    \end{equation*}
    The condition $\lambda_i, \lambda_j\neq 0$ is exactly when a does not lie in the same plane as $f_i$ and $f_j$ so we indeed have $(-1)^j \mathcal{O}(\Co^\vee(f_j)) = (-1)^{i} \mathcal{O}(\Co^\vee(f_i))$ in this case.
\end{proof}

\begin{lemma}\label{lem:aout}
    Let $\tau=[x_0, \dots, x_n]\in X^n$ and $f_i = [x_0, \dots, \widehat{x_i}, \dots, x_n]$. Suppose $a\in \R^n \setminus \tau$ and let $\lambda_0, \dots, \lambda_n$ be the barycentric coordinates of $a$. Then, the following statements hold.
    \begin{enumerate}[label=(\alph*)]
        \item Let $I = \{i: \lambda_i >0 \}$ and $J = \{j: \lambda_j < 0 \}$. It holds that $\bigcup_{i\in I}\image(\Co^\vee(f_i)) = \bigcup_{j\in J} \image(\Co^\vee(f_j))$, and $\image(\Co^\vee(f_k))$ has zero $n$-dimensional measure for all $k\notin I\cup J$.
        \item  $\image(\Co^\vee(f_i))\cap \image(\Co^\vee(f_j))$ has zero $n$-dimensional measure for all $i,j\in\{0, \dots, n\}$ such that $i\neq j$ and both $i,j \in I$ or both $i,j\notin I$. 
        \item Let $i, j \in \{0, \dots, n\}$ If $i,j \in I$ or $i,j\in J$, then $(-1)^i \mathcal{O}(\Co^\vee( f_i)) = (-1)^j\mathcal{O}(\Co^\vee(f_j))$. On the other hand, if $i \in I$ and $j\in J$, then $(-1)^i \mathcal{O}(\Co^\vee( f_i)) = -(-1)^j\mathcal{O}(\Co^\vee(f_j))$.
    \end{enumerate}
\end{lemma}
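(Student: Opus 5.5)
The plan is to treat the three parts separately. Parts (a) and (b) come from the geometry of rays emanating from $a$, as in the proof of Lemma \ref{lem:ain}. Part (c) follows directly from Lemma \ref{lem:orientationrelation}.

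\emph{Facets and half-spaces.} Let $H_i$ be the affine hyperplane spanned by $f_i$. Then $\tau$ lies in the closed half-space $\{\lambda_i\ge 0\}$ bounded by $H_i$. So the sign of the barycentric coordinate $\lambda_i$ of $a$ says which side of $H_i$ the point $a$ lies on:
\begin{itemize}
\item $\lambda_i>0$: $a$ is on the same side as $\tau$;
\item $\lambda_i<0$: $a$ is on the opposite side, so $f_i$ is "visible" from $a$;
\item $\lambda_i=0$: $a\in H_i$.
\end{itemize}
In the last case ($k\notin I\cup J$) the cone $\Co^\vee(f_k)$ is degenerate, so its image has zero $n$-measure (cf.\ Remark \ref{rem:degC}). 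This is the last claim of (a). Note also that $J\neq\emptyset$ because $a\notin\tau$, and $I\neq\emptyset$ because $\sum_k\lambda_k=1$.

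\emph{Entry and exit points.} For $p\neq a$ write $r(p)=\{a+t(p-a):t\ge0\}$. The image of $\Co^\vee(f_i)$ is exactly the union of the rays $r(y)$ with $y\in f_i$. Since $\tau$ is convex and $a\notin\tau$, each ray meets $\tau$ in a (possibly empty or degenerate) segment $[e,x]$, with entry point $e$ and exit point $x$.
\begin{itemize}
\item If $i\in J$ and a point $y\in f_i$ lies on the ray, then $y$ is the entry point $e$. Before crossing $H_i$ the ray lies in $\{\lambda_i<0\}$, which misses $\tau$.
\item If $i\in I$ and $y\in f_i$ lies on the ray, then $y$ is the exit point $x$. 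After crossing $H_i$ the ray leaves $\{\lambda_i\ge0\}$, which contains $\tau$.
\end{itemize}
The entry point lies on some facet whose hyperplane separates $a$ from $\tau$, i.e.\ on some $f_j$ with $j\in J$. The exit point lies on some facet $f_i$ with $i\in I$, since $a$ is on the inner side there. Hence both $\bigcup_{i\in I}\image(\Co^\vee(f_i))$ and $\bigcup_{j\in J}\image(\Co^\vee(f_j))$ coincide with the set of points lying on rays from $a$ that meet $\tau$, which gives (a). If one wants to avoid boundary subtleties, the two unions can instead be shown equal up to a null set: the rays touching $\tau$ only in lower-dimensional faces sweep out cones over $(n-2)$-faces.

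\emph{Overlaps, part (b).} Suppose $i\neq j$ are both in $I$ and $p$ lies in both cones. The ray $r(p)$ meets $f_i$ and $f_j$, each time at its unique exit point. That point therefore lies in $g_{ij}=f_i\cap f_j$, so $p\in\image(\Co^\vee(g_{ij}))$, an infinite cone of dimension at most $n-1$ and hence null. The same argument with entry points handles $i,j\in J$. Indices with $\lambda=0$ contribute only null sets, so the "both $\notin I$" case follows. The delicate point here is the uniqueness of the entry and exit points for rays meeting $\tau$ in a single point. Such rays lie in cones over lower-dimensional faces, so they only add null sets.

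\emph{Orientations, part (c).} By definition $\mathcal{O}(\Co^\vee(f_i))=\Sign\det(B_i)$. Apply \eqref{eq:orientationrelation} for any $i,j\in I\cup J$:
\begin{equation*}
(-1)^j\det(B_j)=\frac{\lambda_j}{\lambda_i}(-1)^i\det(B_i).
\end{equation*}
The ratio $\lambda_j/\lambda_i$ is positive when $i,j$ lie in the same set ($I$ or $J$) and negative when one is in $I$ and the other in $J$. Taking signs gives both claims of (c).
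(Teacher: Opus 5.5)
Your proof is correct, and part (c) is exactly the paper's argument via Lemma~\ref{lem:orientationrelation}. Parts (a) and (b), however, go by a different route.

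\textbf{The paper's route for (a).} Write $K_i$ for the image of the cone over $f_i$ and $v_k=x_k-a$. The paper argues algebraically. It writes $q-a=\sum_{k\ne i}s_kv_k$ with $i\in I$, then uses $\sum_k\lambda_kv_k=0$ to slide along $-v_i$, that is, $w(t)=\sum_{k\ne i}(s_k+t\lambda_k/\lambda_i)v_k$, until the first coefficient indexed by $J$ vanishes. This produces an explicit $j^*\in J$ with explicit nonnegative coefficients showing $q\in K_{j^*}$; the reverse inclusion is symmetric.

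\textbf{Your route for (a).} You identify both unions with the cone $\{a+t(y-a): y\in\tau,\ t\ge 0\}$. The key observation is that every ray from $a$ that meets $\tau$ enters through a facet with $\lambda_j(a)<0$ and exits through a facet with $\lambda_i(a)>0$. The one step you state without proof, that the entry point $e$ lies on some $f_j$ with $j\in J$, follows from the affine identity $\lambda_k(a+s(e-a))=(1-s)\lambda_k(a)+s\lambda_k(e)$. Since points just before $e$ are outside $\tau$, there must be some $k$ with $\lambda_k(e)=0$ and $\lambda_k(a)<0$. The same identity with $s>1$ handles the exit point.

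\textbf{Where your approach pays off: (b).} The paper's argument for (b) only asserts that $r(p)$ meets $g_{ij}$, after noting that $p\in K_\ell$ for some $\ell\notin I$. Your argument actually proves it. A point of $f_i$ with $i\in I$ lying on the ray must be the exit point, and a point of $f_j$ with $j\in J$ must be the entry point. Since these are unique, the two intersection points coincide and lie in $g_{ij}$.

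\textbf{Your hedges are unnecessary.} $r(p)\cap\tau$ is a compact segment, so its entry and exit points are always unique, even when the segment is a single point. So the equality in (a) holds exactly, not merely up to a null set, and no lower-dimensional exceptions arise in (b).
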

\begin{proof}

\emph{Proof of (a): } Note that since $a\in \R^n\setminus \tau$, both $I$ and $J$ must be nonempty. Now, for brevity, let $K_i = \image(\Co^\vee(f_i))$ and $v_i=x_i-a$ for any $i\in\{0, \dots, n\}$. $K_i$ has the representation $K_i = \left\{a + \sum_{k\neq i} s_k v_k: s_k\geq 0\right\}$. Hence,
\begin{equation}\label{eq:conichull}
    p\in K_i \iff p-a \in \left\{\sum_{k\neq i} s_k v_k : s_k\geq 0\right\}.
\end{equation}

Now, we will show that for any $q \in K_i$ for $i\in I$, there exists $j\in J$ such that $q \in K_j$. In fact, we will find such $j$. To this end, let $q\in K_i$ and define $w = q-a$. By \eqref{eq:conichull}, $w = \sum_{k\neq i} s_k v_k$ for some $\{s_k\}_{k\neq i}$ such that $s_k\geq 0$.  Moreover, we define $w(t) = w - tv_i$ for $t\geq 0$.

Recall that $\sum_{k=0}^n \lambda_k v_k = 0$ as noted in the proof of Lemma \ref{lem:orientationrelation}. Hence, $v_i = -\sum_{k\neq i}\frac{\lambda_k}{\lambda_i} v_k$ and
\begin{align*}
    w(t) = \left( \sum_{k \neq i} s_k v_k \right) - t \left( -\sum_{k \neq i} \frac{\lambda_k}{\lambda_i} v_k \right) = \sum_{k \neq i} \left( s_k + t \frac{\lambda_k}{\lambda_i} \right) v_k.
\end{align*}

Consider the coefficient $c_k(t) = s_k + t \frac{\lambda_k}{\lambda_i}$. If $k\in I\setminus\{i\}$, then $\frac{\lambda_k}{\lambda_i} > 0$ so $c_k(t) \geq s_k \geq 0$ for all $t\geq 0$. On the other hand, if $k\in J$, then $\lambda_k<0$ so $c_k(t) \leq s_k$ and $c_k(t)$ is decreasing with $t$. Moreover, for any $j\in J$, $c_j(t) = 0$ at $t_j = -s_j \frac{\lambda_i}{\lambda_j}$. Let $t^* = \min\{t_j:j\in J\}$ and let $j^*$ be the index which achieves this minimum.

Thus, at $t=t^*$, we have that $c_{j^*}(t) = 0$ and $c_k(t^*) \geq 0$ for all $k\in \{0, \dots, n\} \setminus{i}$. Therefore,
\begin{equation*}
    w = w(t^*) + t^*v_i = \sum_{k\neq i, j^*} c_k(t^*)v_k + t^*v_i.
\end{equation*}
We have just expressed $w=q-a$ as a nonnegative combination of the vectors $\{v_k\}_{k\neq j^*}$ so by \eqref{eq:conichull}, $q\in K_{j^*}$. This shows that $\bigcup_{i\in I}\image(\Co^\vee(f_i)) \subset \bigcup_{j\in J} \image(\Co^\vee(f_j))$. The reverse inclusion can be proven in a symmetric way.

Finally, if $k\notin I\cup J$, then $\lambda_k = 0$ meaning that $a$ lies on the same plane as $f_k$ and so $\image(\Co^\vee(f_k))$ is a degenerate $n$-simplex and so has zero $n$-dimensional measure.

\emph{Proof of (b): }This follows for a similar reason as in the proof of Lemma \ref{lem:ain}(b). First, let $i,j \in I$. Then, for any $p\in K_i \cap K_j$, we know that by (a), $p$ must also lie in $K_\ell$ where $\ell\notin I$. Therefore, $r(p)$ intersects both $f_\ell$ and $f_i \cap f_j = g_{ij}$ Thus, if $p\in \image(\Co^\vee(f_i))\cap \image(\Co^\vee(f_j))$, then $ p\in \image(\Co^\vee (g_{ij}))$ which is either an infinite $(n-1)$-cone or $(n-2)$-cone, and hence has $n$-measure zero. The case where $i,j \notin I$ is similar.

\emph{Proof of (c): }
First, suppose either $i, j \in I$ or $i,j \in J$. Then, $\frac{\lambda_j}{\lambda_i} > 0$ so the result follows from \eqref{eq:orientationrelation}. Similarly, if $i\in I$ and $j\in J$, then $\frac{\lambda_j}{\lambda_i} < 0$ so the result again follows from \eqref{eq:orientationrelation}.
\end{proof}
\begin{proof}[Proof of Lemma \ref{lem:k=nkey}]
    Recall that from our definition in \eqref{def:intSingC}, it holds that 
\begin{equation*}
    \int_{L^\vee} \omega = \int_{\image(L^\vee)} f dx_1\dots dx_n = \int_{\image(L^\vee)} f dV.
\end{equation*}
for an $n$-form $\omega = f \vol$ with compact support, and any linear singular cone $L^\vee$ (cf. \cite[Chapter 16]{Lee2013}). From this fact, one can see \eqref{eq:intresult} as follows. First, from Lemma \ref{lem:ain}, $a\in \Int(\tau)$,
\begin{equation*}
    \int_{\image(\Co^\vee(\partial\tau))} f dV = \int_{\R^n} f dV = \int_{\R^n} \omega.
\end{equation*}
Next, when $a\in \R^n \setminus \tau$, by Lemma \ref{lem:aout}, denoting $K_I = \bigcup_{i\in I} \image(\Co^\vee(f_i))$ and $K_J = \bigcup_{j\notin I} \image(\Co^\vee(f_j))$, we have
\begin{equation*}
    \int_{\image(\Co^\vee(\partial\tau))} f dV = \int_{K_I} f dV - \int_{K_J} f dV = 0.
\end{equation*}

\end{proof}

\end{document}